\newtheorem{ex}{Example}[section]
\crefname{hypothesis}{Hypothesis}{Hypotheses}
\title{A Novel and Simple Invariant-Domain-Preserving Framework for PAMPA Scheme: 1D Case	
	\thanks{%Submitted to the editors DATE.
		%\funding{
			Kailiang Wu and Miaosen Jiao were partially supported by Shenzhen Science and Technology Program (No.~RCJC20221008092757098) and 
			National Natural Science Foundation of China (No.~12171227). Yongle Liu was supported by UZH Postdoc Grant, 2024 / Verf\"{u}gung Nr. FK-24-110 and SNSF grant 200020$\_$204917.}} 
\author{
	R\'{e}mi Abgrall\thanks{Institute of Mathematics,
		University of Z\"{u}rich, 8057 Z\"{u}rich, Switzerland (\email{remi.abgrall@math.uzh.ch}).}
	\and 
	Miaosen Jiao\thanks{Department of Mathematics, Southern University of Science and Technology, Shenzhen 518055, China
  (\email{12332859@mail.sustech.edu.cn}).}
\and Yongle Liu\thanks{Institute of Mathematics,
		University of Z\"{u}rich, 8057 Z\"{u}rich, Switzerland (\email{yongle.liu@math.uzh.ch}).}
\and Kailiang Wu\thanks{Corresponding author. Department of Mathematics and Shenzhen International Center
for Mathematics, Southern University of Science and Technology, Shenzhen
518055, China
  (\email{wukl@sustech.edu.cn}).}
}
\begin{document}

\maketitle

% REQUIRED
\begin{abstract} 
	The PAMPA (Point-Average-Moment PolynomiAl-interpreted) method was proposed in [R. Abgrall, Commun. Appl. Math. Comput., 5(1): 370--402, 2023], as an innovative approach effectively combining the conservative and non-conservative formulations of an hyperbolic system of conservation laws to evolve cell averages and point values. Solutions to hyperbolic conservation laws typically admit an invariant domain, and preserving numerical solutions within this domain is essential yet nontrivial. In this paper, we propose a novel framework for designing efficient Invariant-Domain-Preserving (IDP) PAMPA schemes. We first provide a rigorous theoretical analysis of the IDP property for the updated cell averages in the original PAMPA scheme, revealing the critical roles of cell average decomposition and the enforcement of midpoint values within the invariant domain. This analysis also highlights the challenges of relying solely on continuous fluxes to ensure the updated cell averages within the invariant domain. Building on these insights, we introduce a simple IDP limiter for cell midpoint values and {\bf construct a provably IDP PAMPA scheme} that always maintains the IDP property for updated cell averages {\bf by theoretical proof without the need for any post-processing limiters}. This approach contrasts with existing bound-preserving PAMPA schemes, which typically require extra convex limiting to blend high-order and low-order schemes. Most notably, inspired by the Softplus and Clipped ReLU functions from machine learning, we {\bf propose an innovative, automatic IDP reformulation} of the governing equations. Thanks to this new formulation, we {\bf design an unconditionally limiter-free IDP scheme} for evolving point values. We also introduce novel techniques to suppress spurious oscillations in the IDP PAMPA scheme, allowing for effective capture of strong shocks. Numerical experiments in 1D, including tests on the linear convection equation, Burgers’ equation, the compressible Euler equations and MHD equations, demonstrate the accuracy and robustness of the proposed IDP PAMPA scheme.

\end{abstract}

% REQUIRED
\begin{keywords}
Point-Average-Moment PolynomiAl-interpreted (PAMPA) scheme, invariant-domain-preserving (IDP), automatic IDP reformulation, hyperbolic conservation laws, Euler equations of gas dynamics, oscillation control
\end{keywords}
%
%% REQUIRED
\begin{MSCcodes}
65M08, 65M12, 76M12, 35L65, 35Q31
\end{MSCcodes}

\section{Introduction}

Hyperbolic conservation laws are essential for modeling diverse phenomena, including gas dynamics, traffic flow, magnetohydrodynamics, water wave propagation, elastodynamics, shallow water flows, population dynamics, and atmospheric flows. 
Over the past few decades, numerous numerical methods for hyperbolic conservation laws have been developed, including finite volume, finite difference, discontinuous Galerkin (DG) \cite{ShuRKDG1998,Shu1989TVBRKDG}, and spectral difference (SD) methods \cite{SD2005}, among others. Finite volume and finite difference methods typically improve accuracy by extending the computational stencil and employing higher-order reconstructions, such as Essentially Non-Oscillatory (ENO) \cite{HARTEN1987231} and Weighted ENO (WENO) \cite{LIU1994200,shu1989efficient} schemes. DG and SD methods achieve high accuracy by increasing the degrees of freedom within each cell. 
However, traditional numerical methods, while enhancing accuracy, also exhibit certain limitations. For example, the large computational stencil in WENO schemes can complicate parallelization, while the DG method is often memory-intensive.

To address these limitations, Eymann and Roe \cite{EymannRoeJan.2011,EymannRoeJune.2011,EymannRoe2013} proposed the active flux (AF) method, a hybrid finite element–finite volume approach inspired by van Leer \cite{VANLEER1977276}. Unlike traditional methods, the AF method enhances accuracy by introducing point values at cell interfaces as additional degrees of freedom, alongside traditional cell averages. The cell averages are updated through a finite volume approach, while point values evolve using exact or approximate evolution operators with a quadrature rule in time, typically the Simpson rule. 
The original AF method employs continuous reconstructions to represent the numerical solution and uses values of the flux function at quadrature nodes to approximate fluxes across cell interfaces. This design inherently produces a third-order accurate method, eliminating the need for time integration schemes, such as Runge--Kutta methods. In this framework, the evolution operators for point values are critical. Exact evolution operators based on characteristic methods have been developed for linear hyperbolic equations (see, e.g., \cite{EymannRoe2013,Roe2015,Barsukow2019-wx,Calhoun2023-ms}). For nonlinear systems, several approximate evolution operators have been introduced, including those for Burgers’ equation \cite{EymannRoeJan.2011,EymannRoeJune.2011,Barsukow2020-rt}, the 1D compressible Euler equations \cite{EymannRoeJune.2011,Helzel2019-ix,Barsukow2020-rt}, and hyperbolic balance laws \cite{Barsukow2020-rt,Barsukow2024-tc}.

For nonlinear systems, constructing exact or approximate evolution operators becomes significantly more complex, particularly in multiple spatial dimensions. A recent active flux-inspired method, introduced by Abgrall \cite{abgrall2023combination}, offers a streamlined approach to evolving point values and combining different formulations (conservative and non-conservative) of a nonlinear hyperbolic system. This method uses point values and the cell average to construct a quadratic polynomial approximation of the solution within each cell. The updates for both the cell average and point values are formulated in a semi-discrete form and advanced in time using standard Runge--Kutta methods. This approach achieves third-order accuracy and has been extended to arbitrary high-order accuracy in \cite{ABGRALL2023hybrid} and \cite{ABGRALL2023highorder}. 
More recently, this method has been generalized to multidimensional triangular meshes in compressible flow problems \cite{abgrall2023activefluxtriangularmeshes}, to one-dimensional hyperbolic systems of balance laws \cite{liu2024arbitrarily,abgrall2024newapproach}, and to multidimensional hyperbolic balance laws \cite{liu2024pampa}, where this method is named the PAMPA (Point-Average-Moment PolynomiAl-interpreted) scheme.

Solutions of hyperbolic equations often satisfy specific bounds, typically forming a convex invariant domain $G$. For example, solutions to scalar conservation laws adhere to a strict maximum principle (see \eqref{IRofscalar}), while the compressible Euler equations require that density and pressure remain positive (see \eqref{IRofEuler}). Similarly, in the shallow water equations, the water height must stay positive. 
It is essential to explore invariant-domain-preserving (IDP) or bound-preserving schemes that ensure the numerical solution satisfies these bounds (i.e., remains within the invariant domain $G$). For example, in the Euler equations, non-physical negative pressures and densities produced numerically can undermine the equation’s hyperbolicity, leading to nonlinear instabilities and potentially causing the breakdown of the computation. First-order accurate monotone schemes (such as the Godunov, Lax–Friedrichs, and Engquist–Osher schemes) are known to be IDP for scalar conservation laws. These schemes are also adapted to many hyperbolic systems. However, developing high-order accurate IDP schemes is nontrivial.  
In \cite{zhang2010maximum,zhang2010positivity}, Zhang and Shu introduced a general framework for constructing high-order IDP finite volume and discontinuous Galerkin schemes for hyperbolic conservation laws. A key aspect of this approach is rewriting a high-order scheme as a convex combination of first-order schemes through cell average decomposition (CAD), which determines the IDP Courant–Friedrichs–Lewy (CFL) condition. Recently, in \cite{CUI2023ocad,cui2024ocad}, the authors developed a generic approach to achieve an optimal CAD that maximizes the IDP CFL number. 
Another approach for IDP schemes in hyperbolic systems involves flux-correction limiters \cite{hu2013positivity,Liang2014-cj,Xu2014,wu2015high,Xiong2016} or convex limiting \cite{Guermond2015,ConvexLimiting2018,KUZMIN2020,Kuzmin2022}, which use a prepared  IDP (lower-order) scheme to modify high-order schemes via convex combinations. Rigorously proving the IDP properties in the presence of nonlinear constraints presents a considerable challenge, even for first-order schemes. The geometric quasilinearization (GQL) framework \cite{Wu2023GQLreview} addresses this challenge using an equivalent linear representation of any convex invariant domain, motivated by earlier research on  IDP schemes \cite{Wu2017PPRHD,Wu2018PPMHD,Wu2019PPMHD,Wu2021ppmultidimensionalrelativisticMHD,Wu2021RHD} for magnetohydrodynamics (MHD) and relativistic hydrodynamics systems.
%Wu2018PPDGMHD,Guermond2016,kuzmin2017,

Research on developing IDP or bound-preserving PAMPA schemes is crucial but still in its early stages. In \cite{CHUDZIK2021125501}, a limiter was introduced for the point values of the original AF scheme, where the bound-preserving property was shown for the linear advection equation. The MOOD procedure is applied to the PAMPA scheme in \cite{abgrall2023combination} to enforce the positivity of density and pressure when solving the Euler equations. In \cite{duan2024activefluxmethodshyperbolic}, the authors refer to the PAMPA scheme as a generalized AF scheme, employing several flux vector splittings to address transonic issues in nonlinear problems. They also design bound-preserving limiters for evolving both cell averages and point values, using convex limiting and scaling limiting, respectively. 
Recently, another bound-preserving effort for the PAMPA scheme was presented in \cite{abgrall2024BPPAMPA}, which utilized a convex blending of fluxes computed by the high-order PAMPA scheme and a low-order local Lax–Friedrichs-type scheme. Instead of using a scaling limiter as in \cite{duan2024activefluxmethodshyperbolic}, the bound-preserving approach in \cite{abgrall2024BPPAMPA} for point values is distinct, as it formulates the scheme as a convex combination of two residual terms. By blending higher-order residuals with lower-order residuals, each residual term remains within the invariant region $G$, ensuring bound preservation for the point values \cite{abgrall2024BPPAMPA}. Using the geometric quasilinearization (GQL) technique \cite{Wu2023GQLreview}, the bound-preserving PAMPA approach in \cite{abgrall2024BPPAMPA} has been extended to hyperbolic systems such as the Euler equations.

The aim of this paper is to develop a novel, simple, yet efficient IDP framework for PAMPA scheme. 
The novelty and contributions of  this work is summarized as follows. 

\begin{itemize}[leftmargin=*]
	\item {\bf Theoretical Analysis of IDP Property:} The paper first provides a rigorous theoretical analysis of the IDP (Invariant-Domain-Preserving) property for the original PAMPA scheme. This analysis reveals the importance of cell average decomposition and enforcing midpoint values within the invariant domain to maintain the IDP property.
	\item {\bf Identification of Challenges with Continuous Flux:} The analysis highlights the limitations of relying solely on continuous fluxes to ensure that updated cell averages remain within the invariant domain, motivating a new approach to enforce the IDP property.
	\item {\bf  Introduction of a Simple IDP Limiter for Midpoint Values:} 
	A simple IDP limiter, motivated by \cite{zhang2010maximum,zhang2010positivity} is introduced to enforce the IDP property at midpoint values. 
	\item {\bf Provably IDP PAMPA Scheme for Cell Averages:} The paper constructs a provably IDP PAMPA scheme with standard bound-preserving numerical fluxes to maintain the IDP property for updated cell averages without the need for post-processing limiters, contrasting with other bound-preserving PAMPA schemes \cite{duan2024activefluxmethodshyperbolic,abgrall2024BPPAMPA} that typically require the extra convex limiting procedure to blend high-order and low-order schemes.
	\item {\bf Unconditionally Limiter-Free IDP scheme for Point Values:} 
	The PAMPA approach provides great flexibility to use various non-conservative formulations for scheme of the point values. 
	Inspired by the Softplus and Clipped ReLU functions in machine learning, the paper proposes a novel, automatic IDP reformulation of the governing equations.  Thanks to this reformulation, we successfully develop an innovative, unconditionally limiter-free IDP scheme for evolving point values. 
	\item {\bf Oscillation-Control Techniques:} 
	A new  oscillation-eliminating (OE) technique and the monotonicity-preserving (MP) limiter are introduced to suppress spurious oscillations, enabling effective capture of strong shocks by IDP PAMPA scheme.
	\item {\bf Comprehensive Numerical Validation:} 
	The paper presents a series of 1D numerical experiments, including tests on the linear convection equation, Burgers' equation, the compressible Euler equations, and MHD equations, to demonstrate the accuracy and robustness of the proposed IDP PAMPA scheme.
\end{itemize}
The extension of our scheme to multidimensions will be discussed in a future paper.

\section{Invariant Domains in Hyperbolic Conservation Laws}

This paper focuses on one-dimensional (1D) hyperbolic conservation laws:
\begin{align}\label{HCL}
	\begin{cases}
		\displaystyle 
		\frac{\partial \mathbf{U}}{\partial t} + \frac{\partial \mathbf{F}(\mathbf{U})}{\partial x} = {\bf 0}, \quad &(x,t) \in \mathbb{R} \times \mathbb{R}^+, \\
		\mathbf{U}(x,0) = \mathbf{U}_0(x), \quad &x \in \mathbb{R}^d,
	\end{cases}
\end{align}
where $\mathbf{U} \in \mathbb{R}^d$ is the conservative vector, $\mathbf{F} \in \mathbb{R}^d$ represents the flux, and $\mathbf{U}_0 \in \mathbb{R}^d$ is the initial data.

Typically, the exact physical solutions to \eqref{HCL} remain within a convex invariant domain $G \subset \mathbb{R}^d$. Specifically, if the initial solution $\mathbf{U}_0(x) \in G$, then the solution 
$\mathbf{U}(x, t) \in G$ for all $t > 0$. This invariant domain is often determined by fundamental physical bounds or principles. Three illustrative examples are discussed below.

\begin{ex}[Scalar Conservation Laws]
	Consider a scalar conservation law in the form:
	\begin{align}\label{Scalar}
		\begin{cases}
			\displaystyle
			\frac{\partial u}{\partial t} + \frac{\partial f(u)}{\partial x} = 0, \quad &(x,t) \in \mathbb{R} \times \mathbb{R}^+, \\
			u(x,0) = u_0(x), \quad &x \in \mathbb{R},
		\end{cases}
	\end{align}
	where the exact (entropy) solutions satisfy the maximum principle:
	\begin{align}\label{IRofscalar}
		u(x,t) \in G := [U_{\min}, U_{\max}] \qquad \forall t \geq 0,
	\end{align}
	with $U_{\min} = \min_{x} u_0(x)$ and $U_{\max} = \max_{x} u_0(x)$. Clearly, $G = [U_{\min}, U_{\max}]$ is a convex invariant domain for \eqref{Scalar}.
\end{ex}

\begin{ex}[Compressible Euler Equations]
	This system in 1D is given by
	\begin{align}\label{Eulerconservation}
		\frac{\partial}{\partial t} \begin{pmatrix} \rho \\ \rho v \\ E \end{pmatrix} + \frac{\partial}{\partial x} \begin{pmatrix} \rho v \\ \rho v^2 + p \\ v(E + p) \end{pmatrix} = {\bf 0},
	\end{align}
	where $ \rho $ is the fluid density, $ v $ is the velocity, and $ p $ is the pressure. The total energy $ E = \frac{1}{2} \rho v^2 + \rho e $, where $ e = \frac{p}{(\gamma - 1)\rho} $ is the specific internal energy. 
	The physical solutions to this system must ensure the positivity of both density and pressure, namely, 
	\begin{align}\label{IRofEuler}
		\mathbf{U} \in G := \left\{ \mathbf{U} = (\rho, \rho v, E)^T: \rho > 0, \, p = (\gamma - 1) \left( E - \frac{1}{2} \rho v^2 \right) > 0 \right\}.
	\end{align}
	This set $G$ forms a convex invariant domain for the Euler equations \eqref{Eulerconservation}; see \cite{zhang2010positivity,ConvexLimiting2018}.
\end{ex}

\begin{ex}[Ideal MHD Equations]
    The 1D MHD system reads
  \begin{align}\label{MHD}
\frac{\partial }{\partial t}
\begin{pmatrix}
\rho \\ 
\rho v_x \\ 
\rho v_y \\ 
\rho v_z \\ 
B_y \\ 
B_z \\ 
E
\end{pmatrix}
+
\frac{\partial}{\partial x}
\begin{pmatrix}
\rho v_x \\
\rho v_x^2 + p^* - B_x^2 \\
\rho v_x v_y - B_x B_y \\
\rho v_x v_z - B_x B_z \\
B_y v_x - B_x v_y \\
B_z v_x - B_x v_z \\
(E + p^*) v_x - B_x \big( B_x v_x + B_y v_y + B_z v_z \big)
\end{pmatrix} = {\bf 0}.
\end{align}
Here, $\mathbf{B}=(B_x,B_y,B_z)$ represents the magnetic field with $B_x$ being a given constant, and $\mathbf{v}=(v_x,v_y,v_z)$ denotes the velocity vector. The total pressure is $p^* = p + \frac{1}{2} |\mathbf{B}|^2$, where $p = \left(\gamma - 1\right)\left(E - \frac{1}{2}\rho |\mathbf{v}|^2 - \frac{1}{2}|\mathbf{B}|^2\right)$. The remaining physical quantities are defined as in the Euler equations described above. It is essential for the physical solutions of this system to maintain positive values for both density and pressure:
        \begin{align}\label{IRofMHD}
		\mathbf{U} \in G := \left\{ \mathbf{U} = (\rho, \rho v_x,\rho v_y, \rho v_z,B_y,B_z, E)^T: \rho > 0, \, p > 0 \right\}.
	\end{align}
The set $G$ is a convex invariant domain for MHD equations \eqref{MHD}; see \cite{Wu2018PPMHD,Wu2018PPDGMHD,Wu2019PPMHD}.
\end{ex}

Developing IDP schemes that keep the numerical solution within the invariant domain $G$ is essential for maintaining both physical accuracy and numerical stability. Preserving $ \mathbf{U} \in G $ is often a basic  requirement for the hyperbolicity and well-posedness of the system \eqref{HCL}, as in many cases, the Jacobian matrix $ \frac{\partial \mathbf{F}(\mathbf{U})}{\partial \mathbf{U}} $ is real diagonalizable only if $ \mathbf{U} \in G $. For example, in the Euler equations, nonphysical negative pressures or densities generated numerically can lose hyperbolicity, leading to ill-posed discrete problems, nonlinear instabilities, and eventually the breakdown of the code.

Consider the exact solution, denoted as $\mathbf{U}^{\rm RP}\left( \frac{x}{t}; \mathbf{U}_L, \mathbf{U}_R \right)$, to the Riemann problem of \eqref{HCL} with initial data
\begin{align}\label{RP:init}
	\mathbf{U}_0(x) = \begin{cases}
		\mathbf{U}_L, \quad x < 0, \\
		\mathbf{U}_R, \quad x > 0.
	\end{cases}
\end{align}
Assuming $\mathbf{U}_L, \mathbf{U}_R \in G$, we have $\mathbf{U}^{\rm RP}\left( \frac{x}{t}; \mathbf{U}_L, \mathbf{U}_R \right) \in G$ for all $x$ and $t > 0$. Let $\lambda_{\max} (\mathbf{U}_L, \mathbf{U}_R)$ represent the maximum wave speed in this Riemann problem. Then it follows that
\begin{align*}
	\mathbf{U}^{\rm RP}\left( \frac{x}{t}; \mathbf{U}_L, \mathbf{U}_R \right) & = \mathbf{U}_L, \qquad \forall x < - \lambda_{\max}  t, \\
	\mathbf{U}^{\rm RP}\left( \frac{x}{t}; \mathbf{U}_L, \mathbf{U}_R \right) & = \mathbf{U}_R, \qquad \forall x > \lambda_{\max}  t.
\end{align*}
For any $\lambda \ge \lambda_{\max}$, integrating \eqref{HCL} with \eqref{RP:init} over the spacetime domain $[-\lambda, \lambda] \times [0,1]$, and applying the divergence theorem, we obtain 
\begin{align*}
	\frac{ \mathbf{U}_L + \mathbf{U}_R }{2} - \frac{1}{2\lambda} \left(  \mathbf{F}( \mathbf{U}_R ) - \mathbf{F} (\mathbf{U}_L)  \right) = \frac{1}{2\lambda} \int_{-\lambda}^{\lambda} \mathbf{U}^{\rm RP}(x; \mathbf{U}_L, \mathbf{U}_R ) \, dx \in G,
\end{align*}
which lies in the invariant domain $G$ due to the convexity of $G$. This leads to the following proposition.

\begin{prop}[Generalized Lax--Friedrichs Splitting Property]
	If $\mathbf{U}_L, \mathbf{U}_R \in G$, there exists a suitable wave speed estimate $\lambda_{\max} (\mathbf{U}_L, \mathbf{U}_R)$ such that
	\begin{align}\label{LFsplitting}
		\frac{ \mathbf{U}_L + \mathbf{U}_R }{2} - \frac{1}{2\lambda} \left(  \mathbf{F}( \mathbf{U}_R ) - \mathbf{F} (\mathbf{U}_L)  \right) \in G, \qquad \forall \lambda \ge \lambda_{\max}. 
	\end{align}
\end{prop}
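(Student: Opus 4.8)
The plan is to recognize the left-hand side of \eqref{LFsplitting} as the spatial average, at time $t=1$, of the exact self-similar entropy solution of the Riemann problem \eqref{HCL}--\eqref{RP:init}, and then to invoke the convexity of $G$ together with the invariant-domain property of that exact solution. Since the text preceding the statement already carries out the computation, I would organize the argument around three ingredients and make each precise.

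First, I would fix the wave-speed estimate. Entropy solutions of \eqref{HCL} enjoy finite speed of propagation, so the self-similar solution $\mathbf{U}^{\rm RP}(x/t;\mathbf{U}_L,\mathbf{U}_R)$ is confined to a cone $|x|\le\lambda_{\max}t$, coinciding with the constant states $\mathbf{U}_L$ for $x<-\lambda_{\max}t$ and $\mathbf{U}_R$ for $x>\lambda_{\max}t$. I would take $\lambda_{\max}(\mathbf{U}_L,\mathbf{U}_R)$ to be any upper bound for the moduli of the signal speeds in this Riemann fan (for systems, an estimate from the extreme eigenvalues of the flux Jacobian at the relevant states suffices). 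For every $\lambda\ge\lambda_{\max}$ the vertical boundary segments $\{\pm\lambda\}\times[0,1]$ then lie outside the fan, so along them the trace of the solution is exactly $\mathbf{U}_R$ on the right and $\mathbf{U}_L$ on the left.

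Second, I would integrate the conservation law over the spacetime rectangle $[-\lambda,\lambda]\times[0,1]$. Because $\mathbf{U}^{\rm RP}$ is in general only a weak solution, the decisive point is to use the integral (weak) form of \eqref{HCL} rather than a naive pointwise application of the divergence theorem: the weak formulation remains valid across shocks and contacts precisely because the Rankine--Hugoniot relations are built into it. Applying Green's theorem to the box and inserting the lateral boundary traces from the first step, together with the initial data $\int_{-\lambda}^{\lambda}\mathbf{U}_0\,dx=\lambda(\mathbf{U}_L+\mathbf{U}_R)$ from \eqref{RP:init}, collapses the identity to
\begin{align*}
	\frac{1}{2\lambda}\int_{-\lambda}^{\lambda}\mathbf{U}^{\rm RP}(x;\mathbf{U}_L,\mathbf{U}_R)\,dx
	= \frac{\mathbf{U}_L+\mathbf{U}_R}{2}-\frac{1}{2\lambda}\big(\mathbf{F}(\mathbf{U}_R)-\mathbf{F}(\mathbf{U}_L)\big),
\end{align*}
which is exactly the quantity appearing in \eqref{LFsplitting}. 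Third, I would conclude by convexity: since $\mathbf{U}_L,\mathbf{U}_R\in G$ and $G$ is invariant, $\mathbf{U}^{\rm RP}(x;\mathbf{U}_L,\mathbf{U}_R)\in G$ for a.e.\ $x\in[-\lambda,\lambda]$, and as $G$ is convex the spatial average of an integrable $G$-valued function again lies in $G$ (a Jensen-type statement following from the supporting-hyperplane characterization of convex sets). Hence both sides of the displayed identity belong to $G$ for every $\lambda\ge\lambda_{\max}$, which is the claim.

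The main obstacle is not algebraic but structural, and rests on two facts: (i) the existence of a genuinely finite maximum signal speed $\lambda_{\max}$, so that the lateral boundary traces are the undisturbed states $\mathbf{U}_L,\mathbf{U}_R$; and (ii) the legitimacy of applying the divergence theorem to the possibly discontinuous entropy solution, which I would handle by appealing to the weak/integral form of the conservation law rather than to classical differentiation. Everything else---the identification of the average with the Lax--Friedrichs combination and the passage from pointwise membership in $G$ to membership of the average---is a direct consequence of convexity.
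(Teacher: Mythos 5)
Your proposal is correct and takes essentially the same approach as the paper: it integrates the conservation law over the spacetime box $[-\lambda,\lambda]\times[0,1]$, identifies the quantity $\frac{\mathbf{U}_L+\mathbf{U}_R}{2}-\frac{1}{2\lambda}\left(\mathbf{F}(\mathbf{U}_R)-\mathbf{F}(\mathbf{U}_L)\right)$ with the spatial average of the exact Riemann solution at $t=1$, and concludes via the invariant-domain property of that solution together with the convexity of $G$. Your additional care regarding the weak (integral) form of the equation across discontinuities and the Jensen-type averaging step merely makes explicit what the paper's argument leaves implicit.
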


This property, often called the generalized Lax--Friedrichs splitting property \cite{Wu2017PPRHD,Wu2018PPMHD,Wu2021ppmultidimensionalrelativisticMHD}, provides a basis for the IDP property in Lax--Friedrichs-like schemes and can be viewed as an extension of the usually expected Lax--Friedrichs splitting property \cite{zhang2010positivity,wu2015high}. 

Notice that, to ensure the above property \eqref{LFsplitting}, taking $\lambda_{\max}$ as the maximum wave speed in the Riemann problem is {\em a sufficient but not a necessary} requirement. 
In practice, computing the exact maximum wave speed in the Riemann problem can be challenging. Fortunately, for many systems, it is possible to derive a simple, explicit, and easily computable IDP wave speed (still denoted as $\lambda_{\max}$ for convenience). While this estimate may not always serve as an upper bound for the maximum wave speed, it can still rigorously guarantee the validity of property \eqref{LFsplitting}. Examples include:
\begin{itemize}[leftmargin=*]
	\item For scalar conservation laws \eqref{Scalar} with a convex or concave flux $f(u)$, we can set 
$$
\lambda_{\max} = \max\{ |f'(u_L)|,~ |f'(u_R)|  \}
$$
and use monotonicity techniques to verify \eqref{LFsplitting}.
\item For the Euler equations with the invariant domain $G$ in \eqref{IRofEuler}, the IDP wave speed can be chosen as
\begin{equation}\label{eq:EulerIDPspeed}
\lambda_{\max} = \max \left\{ |v_L| + \sqrt{\frac{\gamma p_L}{\rho_L}}, |v_R| + \sqrt{\frac{\gamma p_R}{\rho_R}} \right\}.
\end{equation}
By applying the GQL approach \cite{Wu2023GQLreview}, one can rigorously prove \eqref{LFsplitting}. Although this estimated speed \eqref{eq:EulerIDPspeed} is not generally an upper bound of the maximum wave speed \cite{FastWaveGuermondPopov}, it suffices to ensure the generalized Lax--Friedrichs splitting property \eqref{LFsplitting} for $G$ in \eqref{IRofEuler} (see \cite{Wu2023GQLreview}). It is worth noting that if the minimum entropy principle \cite{ConvexLimiting2018} is incorporated into $G$, the estimated speed in \eqref{eq:EulerIDPspeed} may not always guarantee the property \eqref{LFsplitting}, and a larger wave speed (e.g., \cite{FastWaveGuermondPopov}) would be required. 
    \item For the MHD equations \eqref{MHD}, the IDP wave speed can be taken as
    \begin{align*}
    \lambda_{\max}=&\max\left\{|v_{x,L}|+c_{f,L},|v_{x,R}|+c_{f,R},v_{\rm ROE}+\max\left\{c_{f,L},c_{f,R}\right\}\right\} 
    + \frac{ |{\bf B}_L-{\bf B}_R|  }{\sqrt{\rho_L}+\sqrt{\rho_R}}
    \end{align*}
    with $v_{\rm ROE}:= \frac{\sqrt{\rho_L} v_{x,L}+\sqrt{\rho_R} v_{x,R}}{\sqrt{\rho_L}+\sqrt{\rho_R}}$ and 
    the fast magnetoacoustic wave speed given by 
    $$
    c_f=\frac{1}{\sqrt{2}}\left[ \frac{\gamma p+|\mathbf{B}|^2}{\rho}+\sqrt{\left(\frac{\gamma p+|\mathbf{B}|^2}{\rho}\right)^2-4\frac{\gamma p B_{x}^2}{\rho^2}}\right]^{\frac{1}{2}}.
    $$
    If $B_{x,L}=B_{x,R}$, then we can employ the GQL approach \cite{Wu2023GQLreview} to prove \eqref{LFsplitting}; see \cite{Wu2018PPMHD}.
\end{itemize}

\section{Review of the PAMPA Scheme}

This section provides a brief review of the original PAMPA scheme \cite{abgrall2023combination} for the 1D conservation law \eqref{HCL}. For simplicity, we use the forward Euler method for time discretization, noting that all discussions extend directly to high-order strong-stability-preserving (SSP) time discretization methods, which are convex combinations of forward Euler steps.

The PAMPA scheme \cite{abgrall2023combination} combines both conservative and non-conservative formulations of hyperbolic conservation laws \eqref{HCL}. Consider a non-conservative formulation (equivalent in smooth regions) of \eqref{HCL}:
\begin{align}\label{HCLequivalent}
	\begin{cases}
		\displaystyle
		\frac{\partial{\mathbf W} }{\partial t}+\mathbf{J}\frac{\partial{\mathbf W}}{\partial x}={\bf 0}, \quad &( x,t)\in\mathbb{R}\times \mathbb{R}^+,\\
		{\mathbf W}(x,0)={\mathbf W}_0(x), \quad & x \in \mathbb{R},
	\end{cases}
\end{align}
where $ {\mathbf W} = \Psi(\mathbf{U}) $, with $ \Psi $ being a one-to-one, continuously differentiable vector function. The Jacobian matrix $ \mathbf{J} $ is defined as
\begin{align}\label{eq:JJ} 
	\mathbf{J} = \frac{\partial \Psi(\mathbf{U})}{ \partial \mathbf{U} }  \frac{\partial {\bf F}(\mathbf{U})}{ \partial \mathbf{U} }  \left(  \frac{\partial \Psi(\mathbf{U})}{ \partial \mathbf{U} }  \right)^{-1}. 
\end{align}

For instance, a scalar conservation law \eqref{Scalar}, the mapping $\Psi$ simplifies to the identity, $\Psi(u) = u$, with the Jacobian $f'(u)$, and 
the non-conservative formulation 
is given by
$$
\frac{\partial u}{\partial t}+ f'(u) \frac{\partial u}{\partial x}=0.
$$
For the compressible Euler system 
\eqref{Eulerconservation}, 
a classical non-conservative formulation in primitive variables is 
\begin{align}\label{Eulerprim}
	\frac{\partial}{\partial t} \begin{pmatrix} \rho\\v\\p \end{pmatrix}+ \mathbf{J} \frac{\partial}{\partial x} \begin{pmatrix} \rho\\v\\p \end{pmatrix}
	= {\bf 0} \quad \text{with} \quad \mathbf{J}:=\begin{pmatrix}v&\rho&0\\0&v&\frac{1}{\rho}\\0&\gamma p&v\end{pmatrix},
\end{align}
where the Jacobian matrix $\mathbf{J}$ is real diagonalizable only if $ \mathbf{U} \in G $. 
For this system, $ \mathbf{U} = (\rho, \rho v, E)^T $ represents the conservative variables, and $ {\mathbf W} = (\rho, v, p)^T $ denotes the primitive variables. The mapping $ \Psi $ is the transformation from conservative to primitive variables.

The 1D spatial domain is divided into non-overlapping cells, denoted by $I_{j+\frac{1}{2}} = [x_j, x_{j+1}]$, where $\Delta x_{j+\frac{1}{2}} = x_{j+1} - x_j$ is the cell size, and $x_{j+\frac{1}{2}} = \frac12(x_j + x_{j+1})$ is the cell center. Similarly, the time interval is discretized into time levels $t^n$, with the time step size $\Delta t^n = t^{n+1} - t^{n}$ determined by an appropriate CFL condition.

The PAMPA scheme \cite{abgrall2023combination} simultaneously evolves the cell averages of the variables $\mathbf{U}$ and the point values ${\mathbf W}$. The degrees of freedom at time step $t^n$ are given by
\begin{align}\label{DoFs}
	\overline{\mathbf{U}}_{j+\frac{1}{2}}^n\approx \frac{1}{\Delta x_{j+\frac12}}\int_{x_j}^{x_{j+1}}\mathbf{U} (x,t^n)\, dx, \quad {\mathbf W}_j^n\approx {\mathbf W}(x_j,t^n).
\end{align}
The PAMPA scheme updates the cell averages by integrating \eqref{HCL} over each cell $I_{j+\frac{1}{2}}$, yielding
\begin{align}\label{CellEvolve}
	\overline{\mathbf{U}}_{j+\frac12}^{n+1} = \overline{\mathbf{U}}_{j+\frac12}^{n} - \frac{\Delta t^n} {\Delta x_{j+\frac12}} \Big( \mathbf{F}( \mathbf{U}_{j+1}^n ) -  \mathbf{F}( \mathbf{U}_{j}^n ) \Big),
\end{align}
where the point values $\mathbf{U}_j^n$ are computed by $\mathbf{U}_j^n = \Psi^{-1}({\mathbf W}_j^n)$. The point values of ${\mathbf W}$ are evolved according to
\begin{align}\label{pointevolve}
	{\mathbf W}_j^{n+1} = {\mathbf W}_j^{n} - \Delta t^n \left( {\bf \Phi}_{j+\frac12}^{n,-} + {\bf \Phi}_{j-\frac12}^{n,+} \right),
\end{align}
where the residuals $\Phi_{j\mp\frac12}^{n,\pm}$ can be chosen as
\begin{align}\label{wkl32}
	{\bf \Phi}_{j+\frac12}^{n,-} =\mathbf{J}^-({\mathbf W}_{j}^n)\frac{\delta_j^- {\mathbf W}^n}{\Delta x_{j+\frac{1}{2}} / 2}, 
	\quad {\bf \Phi}_{j-\frac12}^{n,+} =\mathbf{J}^+({\mathbf W}_{j}^n)\frac{\delta_{j}^{+} {\mathbf W}^n}{\Delta x_{j-\frac{1}{2}} / 2}. 
\end{align}
Here, $\mathbf{J}^{\pm}$ can be taken as the Jacobian splitting $\mathbf{J}^{\pm} = ( \mathbf{J} \pm \vert\mathbf{J}\vert)/2$ (see \cite{abgrall2023combination}) or the Local Lax--Friedrichs splitting $\mathbf{J}^{\pm} = ( \mathbf{J} \pm \alpha {\bf I} )/2$, with $\alpha$ denoting a suitable upper bound of the spectral radius of $\mathbf{J}$. In equation \eqref{wkl32}, $\delta_{j}^{\pm} {\mathbf W}^n$ is taken as 
\begin{align}\label{deltaV}
	\delta_{j}^{-} {\mathbf W}^n = -\frac{3}{2} {\mathbf W}_j^n + 2 {\mathbf W}^n_{j+\frac{1}{2}} - \frac{{\mathbf W}^n_{j+1}}{2}, \quad 
	\delta_{j}^{+} {\mathbf W}^n = \frac{{\mathbf W}^n_{j-1}}{2} - 2 {\mathbf W}^n_{j-\frac{1}{2}} + \frac{3}{2} {\mathbf W}^n_{j},
\end{align}
where ${\mathbf W}^n_{j+\frac{1}{2}}$ represents a third-order approximation to the midpoint value ${\mathbf W}(x_{j+\frac12}, t^n)$ in cell $I_{j+\frac12}$, given by 
$
{\mathbf W}_{j+\frac{1}{2}}^n = \Psi(\mathbf{U}_{j+\frac{1}{2}}^n)
$ 
with 	
\begin{align}\label{midpoint}
	\mathbf{U}_{j+\frac{1}{2}}^n  = \frac{3}{2} \overline{\mathbf{U}}_{j+\frac{1}{2}}^n - \frac{1}{4}(\mathbf{U}_{j}^n + \mathbf{U}_{j+1}^n).
\end{align}

\section{Invariant-Domain-Preserving Analysis}

This section presents a rigorous  IDP analysis of the PAMPA schemes, along with a discussion of the challenges in achieving  IDP properties when using only continuous flux. To streamline the analysis, we primarily focus on scalar conservation laws, while our findings readily extend to systems of hyperbolic conservation laws. 

We concentrate on ensuring the  IDP property of the updated cell averages during time-stepping, with point values enforced through a simple IDP limiter later.

\subsection{A Sufficient Condition for Ensuring  IDP Cell Averages in PAMPA}

The relation \eqref{midpoint} implies the following cell average decomposition (CAD):
\begin{align}\label{CAD}
	\overline{\mathbf{U}}_{j+\frac12}^{n} = \frac{1}{6} \mathbf{U}_j^{n} + \frac{4}{6} \mathbf{U}_{j+\frac12}^{n} + \frac{1}{6} \mathbf{U}_{j+1}^{n},
\end{align}
which is critical in ensuring the IDP property of the updated cell averages. 
By substituting the CAD into \eqref{CellEvolve}, we have
\begin{align}\notag
	\overline{\mathbf{U}}_{j+\frac12}^{n+1} &= \frac{1}{6} \mathbf{U}_j^{n} + \frac{4}{6} \mathbf{U}_{j+\frac12}^{n} + \frac{1}{6} \mathbf{U}_{j+1}^{n} - \frac{\Delta t^n} {\Delta x_{j+\frac12}} \Big( {\bf F}( \mathbf{U}_{j+1}^n ) - {\bf F}( \mathbf{U}_{j}^n ) \Big)\\
	\label{eq:convex}
	& = \frac{2}{3} \mathbf{U}_{j+\frac12}^{n} + \frac{1}{3} \left(  \frac{ \mathbf{U}_j^{n} + \mathbf{U}_{j+1}^{n} }{2} - \frac{6 \Delta t^n}{2 \Delta x_{j+\frac12}} \left( {\bf F}( \mathbf{U}_{j+1}^n ) - {\bf F}( \mathbf{U}_{j}^n ) \right) \right),
\end{align}
which is a convex combination of ${\bf U}_{j+\frac12}^n$ and a Lax–Friedrichs-like scheme with a time step of $6 \Delta t^n$. For scalar conservation laws, this Lax–Friedrichs-like scheme is monotone and thus IDP under the CFL condition:
\begin{align}\label{eq:CFL}
	\max\{ |f'( u_j^{n} )|, |f'( u_{j+1}^{n} )| \}\frac{\Delta t^n}{\Delta x_{j+\frac12}} \le \frac{1}{6}.
\end{align}
For general hyperbolic systems with the generalized Lax--Friedrichs splitting property \eqref{LFsplitting}, this Lax–Friedrichs-like scheme is IDP under the CFL condition:
\begin{align}\label{eq:CFL2}
	\lambda_{\max} ( {\bf U}_j^n, {\bf U}_{j+1}^n )   \frac{\Delta t^n}{\Delta x_{j+\frac12}}  \le \frac{1}{6}.
\end{align}
If $\mathbf{U}_j^{n} \in G$ for all $j$, then 
\begin{align}\label{eq:LFBP}
	\frac{ \mathbf{U}_j^{n} + \mathbf{U}_{j+1}^{n} }{2} - \frac{6 \Delta t^n}{2 \Delta x_{j+\frac12}} \left( {\bf F}( \mathbf{U}_{j+1}^n ) - {\bf F}( \mathbf{U}_{j}^n ) \right) \in G 
\end{align}
under the CFL condition \eqref{eq:CFL2}. Thus, by the convex decomposition \eqref{eq:convex} and the convexity of $G$, we conclude that
\begin{align*}
	\overline{\mathbf{U}}_{j+\frac12}^{n+1} \in G,
\end{align*}
whenever the CFL condition \eqref{eq:CFL2} is satisfied.

In summary, we obtain the following theoretical results.

\begin{thm}\label{thm:Kailiang1}
	If $\overline{\mathbf{U}}_{j+\frac12}^{n} \in G$ and the point values satisfy
	$$
	\mathbf{U}_j^n, \mathbf{U}^n_{j+\frac12}, \mathbf{U}^n_{j+1} \in G
	$$
	for every cell $j$, then under the CFL condition \eqref{eq:CFL2}, we have  
	$$\overline{\mathbf{U}}_{j+\frac12}^{n+1} \in G.$$
\end{thm}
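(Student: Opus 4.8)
The plan is to rewrite the updated cell average $\overline{\mathbf{U}}_{j+\frac12}^{n+1}$ as a convex combination of quantities already known to lie in $G$, and then invoke the convexity of $G$. The entire argument hinges on the cell average decomposition and on the generalized Lax--Friedrichs splitting property \eqref{LFsplitting} established earlier, which I may assume.

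First I would use the midpoint relation \eqref{midpoint} to produce the CAD \eqref{CAD}, expressing $\overline{\mathbf{U}}_{j+\frac12}^{n}$ as the convex combination $\tfrac16 \mathbf{U}_j^n + \tfrac46 \mathbf{U}_{j+\frac12}^n + \tfrac16 \mathbf{U}_{j+1}^n$; here the weight $\tfrac46$ on the midpoint is precisely what will drive the CFL constant. Substituting this into the finite-volume update \eqref{CellEvolve} and regrouping yields \eqref{eq:convex}. The key bookkeeping step is to peel off $\tfrac23 \mathbf{U}_{j+\frac12}^n$ and absorb the remaining endpoint terms together with the flux difference into a single Lax--Friedrichs-type operator carrying the inflated time step $6\Delta t^n$; this works cleanly because the flux difference $\mathbf{F}(\mathbf{U}_{j+1}^n)-\mathbf{F}(\mathbf{U}_j^n)$ depends only on the endpoint values, while the midpoint value is merely carried along.

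Next I would apply the generalized Lax--Friedrichs splitting property \eqref{LFsplitting} with $\mathbf{U}_L=\mathbf{U}_j^n$, $\mathbf{U}_R=\mathbf{U}_{j+1}^n$ and effective wave-speed bound $\lambda = \Delta x_{j+\frac12}/(6\Delta t^n)$. The requirement $\lambda \ge \lambda_{\max}(\mathbf{U}_j^n,\mathbf{U}_{j+1}^n)$ is exactly the CFL condition \eqref{eq:CFL2}, so the bracketed Lax--Friedrichs term \eqref{eq:LFBP} lies in $G$. Since $\mathbf{U}_{j+\frac12}^n \in G$ by hypothesis and \eqref{eq:convex} is a convex combination with weights $\tfrac23$ and $\tfrac13$, the convexity of $G$ delivers $\overline{\mathbf{U}}_{j+\frac12}^{n+1}\in G$.

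I expect no serious obstacle in the proof itself: the only delicate point is matching the inflated time-step factor $6$ against the CFL constant $\tfrac16$, and the genuinely hard analysis—verifying that a Lax--Friedrichs update preserves a nonlinearly-constrained domain such as \eqref{IRofEuler}—is already packaged into \eqref{LFsplitting}. For scalar laws this reduces to a monotonicity argument under \eqref{eq:CFL}, whereas for systems one relies on the GQL-based verification of \eqref{LFsplitting} cited above.
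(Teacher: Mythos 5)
Your proposal is correct and follows essentially the same route as the paper: substituting the CAD \eqref{CAD} into \eqref{CellEvolve} to obtain the convex splitting \eqref{eq:convex} into $\tfrac23\,\mathbf{U}_{j+\frac12}^n$ plus a Lax--Friedrichs-type update with inflated time step $6\Delta t^n$, then invoking the generalized Lax--Friedrichs splitting property \eqref{LFsplitting} with $\lambda = \Delta x_{j+\frac12}/(6\Delta t^n)$ under the CFL condition \eqref{eq:CFL2}, and concluding by convexity of $G$. Your identification of the weight $\tfrac46$ on the midpoint as the source of the CFL constant $\tfrac16$, and of \eqref{LFsplitting} as the step encapsulating the nonlinear-constraint analysis, matches the paper's reasoning exactly.
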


Since the midpoint value is fully determined by relation \eqref{midpoint} (or equivalently the CAD \eqref{CAD}) using the degrees of freedom $\{\overline{\mathbf{U}}_{j+\frac12}^{n}, \mathbf{U}_j^{n}, \mathbf{U}_{j+1}^{n}\}$,  the above theorem can also be restated as follows.

\begin{thm}\label{thm:Kailiang2}
	If
	$$
	\overline{\mathbf{U}}_{j+\frac12}^{n} \in G, \qquad \mathbf{U}_j^n, \mathbf{U}^n_{j+1} \in G, \qquad \frac{3}{2} \overline{\mathbf{U}}_{j+\frac12}^{n} - \frac{1}{4} \left( \mathbf{U}_j^{n} + \mathbf{U}_{j+1}^{n} \right) \in G,
	$$
	for every cell $j$, then under the CFL condition \eqref{eq:CFL2}, we have 
	$$
	\overline{\mathbf{U}}_{j+\frac12}^{n+1} \in G. 
	$$
\end{thm}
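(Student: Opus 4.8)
The plan is to recognize Theorem~\ref{thm:Kailiang2} as a direct restatement of Theorem~\ref{thm:Kailiang1}, in which the abstract midpoint membership $\mathbf{U}^n_{j+\frac12} \in G$ is replaced by an equivalent condition phrased purely in terms of the stored degrees of freedom. First I would invoke the midpoint formula~\eqref{midpoint}, which reads $\mathbf{U}_{j+\frac12}^n = \frac{3}{2}\overline{\mathbf{U}}_{j+\frac12}^n - \frac{1}{4}(\mathbf{U}_j^n + \mathbf{U}_{j+1}^n)$. This identity shows that the third hypothesis of Theorem~\ref{thm:Kailiang2}, namely $\frac{3}{2}\overline{\mathbf{U}}_{j+\frac12}^{n} - \frac{1}{4}(\mathbf{U}_j^n + \mathbf{U}_{j+1}^n) \in G$, is \emph{identically} the statement $\mathbf{U}^n_{j+\frac12} \in G$, rather than merely implying it.

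Having established this equivalence, the three membership hypotheses of Theorem~\ref{thm:Kailiang2} together reproduce exactly the assumptions $\mathbf{U}_j^n,\, \mathbf{U}^n_{j+\frac12},\, \mathbf{U}^n_{j+1} \in G$ demanded by Theorem~\ref{thm:Kailiang1}. I would then apply Theorem~\ref{thm:Kailiang1} verbatim: under the CFL condition~\eqref{eq:CFL2}, the convex decomposition~\eqref{eq:convex} writes $\overline{\mathbf{U}}_{j+\frac12}^{n+1}$ as a convex combination of $\mathbf{U}_{j+\frac12}^n \in G$ and the Lax--Friedrichs-like state appearing in~\eqref{eq:LFBP}, and the latter lies in $G$ by the generalized Lax--Friedrichs splitting property~\eqref{LFsplitting}. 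Convexity of $G$ then immediately gives $\overline{\mathbf{U}}_{j+\frac12}^{n+1} \in G$.

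Because the argument is a pure substitution into an already-proved result, I do not expect any genuine difficulty. The only step warranting care is checking that the algebraic expression in the third hypothesis coincides with~\eqref{midpoint} exactly---matching the signs and the coefficients $\frac{3}{2}$ and $\frac{1}{4}$---so that the replacement is a true equivalence and not merely a sufficient condition. Once this is verified, the conclusion follows with no additional work.
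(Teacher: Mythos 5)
Your proposal is correct and matches the paper exactly: the paper also obtains Theorem~\ref{thm:Kailiang2} by observing that the midpoint formula~\eqref{midpoint} makes the third hypothesis identical to $\mathbf{U}^n_{j+\frac12} \in G$, reducing the statement to Theorem~\ref{thm:Kailiang1} and its convex-decomposition proof via~\eqref{eq:convex} and~\eqref{eq:LFBP}. No gaps.
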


\subsection{Challenges for  IDP Cell Averages Using Only Continuous Flux}\label{sec:challenges}

In this subsection, we explain the challenges in achieving  IDP cell averages with only continuous flux, even if all the (cell endpoint) point values are BP. 
This underscores the necessity of adopting appropriate numerical fluxes at cell interfaces to ensure the  IDP property of the updated cell averages during time-stepping.

Theorems above suggest that, to ensure IDP for the updated cell averages during time evolution, all point values---including the midpoint value $\mathbf{U}_{j+\frac12}^{n}$---must satisfy the IDP condition. However, guaranteeing $\mathbf{U}_{j+\frac12}^{n} \in G$ is nontrivial, as this value is uniquely determined by \eqref{midpoint} once $\overline{\mathbf{U}}_{j+\frac12}^{n}$, $\mathbf{U}_j^n$, and $\mathbf{U}_{j+1}^n$ are specified.

This raises a natural question:

\textbf{\em Is it possible to remove the requirement $\mathbf{U}_{j+\frac12}^{n} \in G$ from Theorem \ref{thm:Kailiang1}?}

\noindent
The answer is no, as shown in the following theorem.

\begin{thm}\label{thm:Kailiang3} 
	If 
	$$
	\overline{\mathbf{U}}_{j+\frac12}^{n},~ \mathbf{U}_j^n,~ \mathbf{U}_{j+1}^n \in G \quad \forall j,
	$$
	but $\mathbf{U}_{j+\frac12}^{n} \notin G$, then, in general, 
	$$
	\overline{\mathbf{U}}_{j+\frac12}^{n+1} \quad \text{may not necessarily belong to}~ G, 
	$$
	and there is no constant CFL number that can always ensure $\overline{\mathbf{U}}_{j+\frac12}^{n+1} \in G$.
\end{thm}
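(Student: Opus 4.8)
The plan is to prove the statement by exhibiting an explicit counterexample in the simplest setting---scalar linear advection---for which the updated cell average leaves $G$ while the CFL condition \eqref{eq:CFL2} is satisfied, the failure being attributable solely to the midpoint value lying outside $G$. First I would isolate the structural reason behind the failure using the convex decomposition \eqref{eq:convex}. The Lax--Friedrichs-like term in \eqref{eq:convex} depends only on the endpoint point values $u_j^n,u_{j+1}^n\in G$, so by \eqref{eq:LFBP} it remains in $G$ under \eqref{eq:CFL2} regardless of the midpoint. Consequently the updated average is the convex combination $\overline{u}_{j+\frac12}^{n+1}=\tfrac23 u_{j+\frac12}^n+\tfrac13 L$ with $L\in G$, but with weight $\tfrac23$ carried by $u_{j+\frac12}^n$; once $u_{j+\frac12}^n\notin G$ and sufficiently far outside, no in-$G$ value of $L$ can drag this combination back into $G$. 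This identifies the midpoint hypothesis of Theorem~\ref{thm:Kailiang1} as genuinely indispensable and tells me exactly how to design the data.

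Next I would instantiate this with concrete numbers. Take $f(u)=au$ with $a>0$ and $G=[0,1]$, and set, at a single cell, $u_j^n=1$, $u_{j+1}^n=0$, and $\overline{u}_{j+\frac12}^n=1$, completing the remaining degrees of freedom by arbitrary values in $G$; since the update \eqref{CellEvolve} of this cell's average only sees $u_j^n,u_{j+1}^n$, the global configuration is immaterial and all hypotheses of Theorem~\ref{thm:Kailiang3} are met. A direct evaluation of \eqref{midpoint} gives $u_{j+\frac12}^n=\tfrac32\cdot 1-\tfrac14(1+0)=\tfrac54\notin G$, whereas $\overline{u}_{j+\frac12}^n,u_j^n,u_{j+1}^n\in G$. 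Substituting into \eqref{CellEvolve} then yields
\begin{align*}
\overline{u}_{j+\frac12}^{n+1}=1-\frac{\Delta t^n}{\Delta x_{j+\frac12}}\bigl(f(u_{j+1}^n)-f(u_j^n)\bigr)=1+a\frac{\Delta t^n}{\Delta x_{j+\frac12}}.
\end{align*}

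From this single formula I would extract both conclusions. For any prescribed CFL number $c>0$, choosing the time step at the CFL limit $a\,\Delta t^n/\Delta x_{j+\frac12}=c\le\tfrac16$ keeps \eqref{eq:CFL2} satisfied yet produces $\overline{u}_{j+\frac12}^{n+1}=1+c>1=U_{\max}$, so $\overline{u}_{j+\frac12}^{n+1}\notin G$. Because the overshoot $c$ persists for every $c>0$, however small, no constant CFL number can restore the IDP property once the midpoint condition is dropped, which proves both assertions. As a sharpness check I would record the companion family with the same $u_j^n,u_{j+1}^n$ but $\overline{u}_{j+\frac12}^n=s\in[0,1]$ at $c=\tfrac16$: here the midpoint $\tfrac32 s-\tfrac14$ leaves $G$ exactly when $s>\tfrac56$, and $\overline{u}_{j+\frac12}^{n+1}=s+\tfrac16$ leaves $G$ under precisely the same condition, so the onset of failure coincides with the midpoint exiting $G$---confirming that Theorem~\ref{thm:Kailiang1} is tight, not merely sufficient. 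I would close by remarking that the same mechanism (weight $\tfrac23$ on an out-of-$G$ midpoint) transfers verbatim to nonlinear fluxes and to systems such as the Euler equations, so restricting to linear advection is only for transparency.

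The step requiring the most care is not the arithmetic but the logical framing of the ``no constant CFL'' claim: I must verify that the counterexample keeps \eqref{eq:CFL2} satisfied rather than silently violating it, so that the breakdown is unambiguously caused by $u_{j+\frac12}^n\notin G$ and not by an oversized time step. Establishing that $L\in G$ under \eqref{eq:CFL2} for the chosen data (so that the convex-combination reading of \eqref{eq:convex} is legitimate) and exhibiting the violation at the CFL limit for all $c$ simultaneously are the two points where I would be most deliberate.
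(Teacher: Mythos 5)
Your proposal is correct and takes essentially the same route as the paper's own proof: the paper uses the identical advection counterexample ($G=[0,1]$, $u_j^n=1$, $u_{j+1}^n=0$) with $\overline{u}_{j+\frac12}^{n}=1-\frac{2}{3}\epsilon$, computing $\overline{u}_{j+\frac12}^{n+1}=1-\frac{2}{3}\epsilon+\frac{\Delta t^n}{\Delta x_{j+\frac12}}$ and letting $\epsilon>0$ be arbitrarily small to defeat any fixed CFL number, whereas your choice $\overline{u}_{j+\frac12}^{n}=1$ is just the boundary case $\epsilon=0$, which defeats every positive CFL number with a single configuration. Only your peripheral ``sharpness'' aside is slightly inaccurate---for $s<\frac{1}{6}$ the midpoint $\frac{3}{2}s-\frac{1}{4}$ also exits $G$ while $\overline{u}_{j+\frac12}^{n+1}=s+\frac{1}{6}$ stays in $G$, so the onset of failure coincides with the midpoint exiting $G$ only through the upper boundary---but this side remark plays no role in the validity of the proof.
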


\begin{proof}
	For clarity, consider the 1D advection equation $u_t + u_x = 0$ with $G = [0, 1]$. Suppose there is a cell $I_{j+\frac12} = [x_{j}, x_{j+1}]$ such that
	$$
	\overline{u}_{j+\frac12}^{n} = 1 - \frac{2}{3} \epsilon \quad \text{with} \quad 0 < \epsilon \ll \frac{1}{4}, \quad u_j^n = 1, \quad u_{j+1}^n = 0,
	$$
	which all lie within $G = [0, 1]$. However, the midpoint value
	$$
	u_{j+\frac12}^{n} = \frac{3}{2} \overline{u}_{j+\frac12}^{n} - \frac{1}{4} \left( u_j^{n} + u_{j+1}^{n} \right) = \frac{5}{4} - \epsilon \notin G.
	$$
	The PAMPA scheme \eqref{CellEvolve} with continuous flux for updating the cell averages gives
	\begin{align*}%\label{eq:CellAve2}
		\overline{u}_{j+\frac12}^{n+1} &= \overline{u}_{j+\frac12}^{n} - \frac{\Delta t^n} {\Delta x_{j+\frac12}} \Big( f( u_{j+1}^n ) - f( u_{j}^n ) \Big)\\
		&= \overline{u}_{j+\frac12}^{n} - \frac{\Delta t^n} {\Delta x_{j+\frac12}} \Big( u_{j+1}^n - u_{j}^n \Big)\\
		&= 1 - \frac{2}{3} \epsilon + \frac{\Delta t^n} {\Delta x_{j+\frac12}},
	\end{align*}
	which lies within $G$ if and only if 
	$
	\frac{\Delta t^n} {\Delta x_{j+\frac12}} \le \frac{2}{3} \epsilon.
	$ 
	However, the above counterexample can be constructed with arbitrarily small $\epsilon > 0$. Hence, in general, if $u_{j+\frac12}^{n} \notin G$, there is no constant CFL number that can always guarantee $\overline{u}_{j+\frac12}^{n+1} \in G$ in all cases.
\end{proof}

\begin{rem}
	In \cite{duan2024activefluxmethodshyperbolic,abgrall2024BPPAMPA}, two bound-preserving limiting techniques were developed to enforce the IDP property for the endpoint values $\mathbf{U}_j^n, \mathbf{U}_{j+1}^n \in G$, but these techniques do not ensure that the midpoint value $\mathbf{U}_{j+\frac12}^{n} \in G$. As a result, they cannot theoretically guarantee that $\overline{\mathbf{U}}_{j+\frac12}^{n+1} \in G$ is preserved automatically during time-stepping. To address this, an additional limiter (i.e., convex limiting) is required to enforce IDP cell averages \cite{duan2024activefluxmethodshyperbolic,abgrall2024BPPAMPA}. 
	The convex limiting blends the high-order continuous flux in PAMPA with a first-order IDP numerical flux (typically the Lax–Friedrichs flux). {\bf Note that when the convex limiting is activated, the resulting flux used to update cell averages actually becomes a blended numerical flux rather than a continuous flux.}
\end{rem}

\begin{rem}
	In \cite{duan2024activefluxmethodshyperbolic}, Duan et al.~employed a power limiter for the reconstruction polynomial in each cell. Once this limiter is applied, the reconstructed solution is no longer a polynomial, so the Simpson rule is no longer exact for its integration. Consequently, the crucial CAD (cell average decomposition) \eqref{CAD} does not hold for their approach. The CAD is essential in our theoretical proof for automatically ensuring  IDP cell averages during time-stepping.
\end{rem}

\begin{rem}\label{rem:kailiang1102}
	Theorem \ref{thm:Kailiang3} demonstrates that maintaining the  IDP property for both the endpoint and midpoint values is essential for automatically preserving  IDP cell averages during time-stepping. This raises the question:
	
	\textbf{Is it possible to construct a point-value scheme that also preserves  IDP midpoint values?} 
	
	\noindent
	In other words, can the high-order point-value scheme \eqref{pointevolve} be modified to ensure both IDP endpoint and IDP midpoint values:
	\begin{align}\label{Kailiang2}
		\mathbf{U}_j^n,~~ \mathbf{U}_{j+1}^n \in G, \qquad \frac{3}{2} \overline{\mathbf{U}}_{j+\frac12}^{n} - \frac{1}{4} \left( \mathbf{U}_j^{n} + \mathbf{U}_{j+1}^{n} \right) \in G, \qquad \forall j.
	\end{align}
	This is a challenging task if one persists in maintaining continuity of the endpoint values across cell interfaces. The conditions in \eqref{Kailiang2} are globally coupled for all cells $j$, meaning that modifying or limiting the high-order point-value scheme \eqref{pointevolve} to meet \eqref{Kailiang2} becomes a globally coupled problem for all cells.
\end{rem}

Due to the difficulties in simultaneously ensuring  IDP midpoint values while persisting in the continuity of endpoint values at cell interfaces, we will propose using appropriate numerical fluxes at cell interfaces to ensure the  IDP property of the updated cell averages.

\section{Novel Efficient Invariant-Domain-Preserving PAMPA Framework}\label{IDPscheme}

This section introduces our IDP PAMPA framework. Unlike the convex limiting approach \cite{duan2024activefluxmethodshyperbolic,abgrall2024BPPAMPA} for IDP cell averages, we first apply a local scaling limiter to enforce the IDP midpoint values, which subsequently ensure that the IDP property of the updated cell averages is {\bf preserved by proof without any post-processing limiter}. 
The unconditionally IDP scheme for updating point values is based on a novel non-conservative form of the hyperbolic system, providing an innovative approach for {\bf automatically ensuring the IDP property without any limiter}. 

Assume that all point values $\{ {\bf U}_j^n \}$ and cell averages $\{\overline {\mathbf{U}}^n_{j+\frac12}\}$ at time level $n$ lie within $G$. Our objective is to develop a scheme that ensures the updated cell averages $\overline{\mathbf{U}}_{j+\frac{1}{2}}^{n+1} \in G$ under a suitable CFL condition and that the updated point values ${\bf U}_j^{n+1} \in G$ automatically and unconditionally. 

For simplicity, we use the forward Euler method for time discretization, noting that the IDP properties remain valid for higher-order SSP time discretization methods. These methods, as convex combinations of forward Euler steps, retain the IDP properties due to the convexity of $G$.

Our IDP PAMPA scheme is designed through the following three steps.

\subsection{Step 1: Local Scaling IDP Limiter for Midpoint Values}

According to Theorem \ref{thm:Kailiang1}, in order to ensure the IDP property of the updated cell averages at the next time level $t=t^{n+1}$, we need to first enforce all the point values, including the endpoint and midpoint values, $\mathbf{U}_j^n, \mathbf{U}^n_{j+\frac12}, \mathbf{U}^n_{j+1}$ within $G$.

The local scaling bound-preserving limiter proposed by Zhang and Shu \cite{zhang2010maximum,zhang2010positivity} is adapted here. 
Assume that $\overline{\mathbf{U}}_{j+\frac12}^n \in G$, which is ensured by the IDP property of the updated cell averages in the previous time step. We aim to use $\overline{\mathbf{U}}_{j+\frac12}^n$ to limit $\mathbf{U}^n_{j+\frac12}$ such that the limited endpoint values lie within $G$.

\subsubsection{Scalar Conservation Laws}
For a scalar conservation law with $G = [U_{\min}, U_{\max}]$, the local scaling IDP limiter in each cell $j$ is defined as follows:
\begin{align}
	\label{mid}
	& \widehat{u}_{j+\frac12}^{n} := (1-\theta_{j+\frac12}) \overline{u}_{j+\frac12}^{n} + \theta_{j+\frac12} u_{j+\frac12}^n, 
\end{align}
where
\begin{align*}%\label{BPparameterscalar}
	\theta_{j+\frac12} = \begin{cases}
		\displaystyle 
		\frac{  \overline{u}_{j+\frac12}^{n} - U_{\min}   }{  \overline{u}_{j+\frac12}^{n} - {u}_{j+\frac12}^{n} }, \qquad & {\rm if}~~ {u}_{j+\frac12}^{n} < U_{\min},
		\\
		\displaystyle
		\frac{ U_{\max}-  \overline{u}_{j+\frac12}^{n}    }{ {u}_{j+\frac12}^{n}- \overline{u}_{j+\frac12}^{n}  }, \qquad & {\rm if}~~ {u}_{j+\frac12}^{n} > U_{\max},
		\\
		1, \qquad & {\rm otherwise.} 
	\end{cases}
\end{align*}
Although the endpoint values $\mathbf{U}_j^n$ and $\mathbf{U}^n_{j+1}$ are already preserved in $G$ from the previous time step, 
as analyzed in Remark \ref{rem:kailiang1102}, enforcing the midpoint values in $G$ and keeping the CAD \eqref{CAD} may require sacrificing the continuity of the endpoint values.  
To ensure the CAD \eqref{CAD}, which is crucial for the IDP property of the updated cell averages, we modify the endpoint values as follows:
\begin{align}
	\label{left}
	& \text{Left endpoint value} \quad \widehat{u}_{j}^{n,R} := (1-\theta_{j+\frac12}) \overline{u}_{j+\frac12}^{n} + \theta_{j+\frac12} u_{j}^{n}, \\
	\label{right}
	& \text{Right endpoint value} \quad \widehat{u}_{j+1}^{n,L} := (1-\theta_{j+\frac12}) \overline{u}_{j+\frac12}^{n} + \theta_{j+\frac12} u_{j+1}^{n},
\end{align}
It can be verified that the limited values $\widehat{u}_{j}^{n,R}, \widehat{u}_{j+\frac12}^{n}, \widehat{u}_{j+1}^{n,L}$ all lie within $G$ and satisfy the CAD \eqref{CAD}. Moreover, this IDP limiter preserves the high-order accuracy of the PAMPA scheme, as justified in \cite{zhang2010maximum,zhang2017positivity} and confirmed by our numerical experiments. This IDP limiter is efficient, as it is applied separately and independently within each cell, enabling effective parallelization.

\subsubsection{Hyperbolic Systems}
This local scaling IDP limiter can be extended to systems. Taking the 1D compressible Euler equations \eqref{Eulerconservation} as an example, assuming that the cell averages $\overline{\mathbf{U}}_{j+\frac12}^n = (\overline{\rho}_{j+\frac12}^n, \overline{\rho v}_{j+\frac12}^n, \overline{E}_{j+\frac12}^n)$ lie within the invariant region $G$, we use each cell average $\overline{\mathbf{U}}_{j+\frac{1}{2}}^n$ to limit the midpoint value $\mathbf{U}_{j+\frac12}^n$ to $G$. This modification ensures that the resulting point values satisfy $\rho > \epsilon_\rho := \min_j\{10^{-13}, \overline{\rho}_{j+\frac12}^{n}\}$ and $p > \epsilon_p := \min_j\{10^{-13}, \overline{p}_{j+\frac12}^n \}$.

The local scaling IDP limiter in each cell $j$ is defined as follows:
\begin{align}
	\label{left2}
	&{\bf U}^{*} := (1 - \theta_{\rho, j+\frac12}) \overline{\mathbf{U}}_{j+\frac12}^{n} + \theta_{\rho,j+\frac12} {\bf U}_{j+\frac12}^n, 
	\\ 
	&\widehat{\mathbf{U}}_{j+\frac12}^{n} := (1 - \theta_{p,j+\frac12}) \overline{\mathbf{U}}_{j+\frac12}^{n} + \theta_{p,j+\frac12} \mathbf{U}^{*}, 
\end{align}
where 
$$
\theta_{\rho, j+\frac12}= \begin{cases}
	\displaystyle 
	\frac{\overline{\rho}_j^n - \epsilon_\rho}{\overline{\rho}_j^n - \rho_{j+\frac{1}{2}}^n },  & {\rm if}~~  \rho_{j+\frac{1}{2}}^n< \epsilon_\rho,
	\\
	1,   & {\rm otherwise},
\end{cases} \quad 
\theta_{p, j+\frac12}= \begin{cases}
	\displaystyle 
	\frac{p(\overline{\mathbf{U}}_j^{n}) - \epsilon_p}{p(\overline{\mathbf{U}}_j^n) - p(\mathbf{U}^{*})},  & {\rm if}~~  p(\mathbf{U}_j^{*}) < \epsilon_p,
	\\
	1,   & {\rm otherwise}.
\end{cases}
$$
To ensure the CAD \eqref{CAD} for the provable IDP property of the updated cell averages, we modify the endpoint values as follows:
\begin{align}
	& \text{Left endpoint value} \quad \widehat{\mathbf{U}}_{j}^{n,R} := (1 - \theta_{j+\frac12}) \overline{\mathbf{U}}_{j+\frac12}^{n} + \theta_{j+\frac12}\mathbf{U}_j^{n}, \\ 
	& \text{Right endpoint value} \quad \widehat{\mathbf{U}}_{j+1}^{n,L} := (1 - \theta_{j+\frac12}) \overline{\mathbf{U}}_{j+\frac12}^{n} + \theta_{j+\frac12} \mathbf{U}_{j+1}^{n}, 
\end{align}
where $\theta_{j+\frac12} = \theta_{\rho, j+\frac12} \theta_{p, j+\frac12}$. It can be proven that the limited point values of the conservative variables still satisfy the CAD:
\begin{align}\label{CAD2}
	\overline{\mathbf{U}}_{j+\frac12}^{n} = \frac{1}{6} \widehat{ \mathbf{U}}_j^{n,R} + \frac{4}{6} \widehat{ \mathbf{U}}_{j+\frac12}^{n} + \frac{1}{6} \widehat{\mathbf{U}}_{j+1}^{n,L},
\end{align}
which is crucial for proving the IDP property of the updated cell averages later.

\subsection{Step 2: Provably IDP Scheme for Updating Cell Averages without Any Extra Limiter}

When the local scaling IDP limiter is activated in Step 1 (i.e., $\theta_{j+\frac12} < 1$), each cell interface may yield two distinct point values, i.e., $\widehat {\bf U}_{j}^{n,L} \neq \widehat {\bf U}_{j}^{n,R}$. This, coupled with the challenge of maintaining the IDP property of updated cell averages using a continuous flux (as discussed in Section \ref{sec:challenges}), leads us to replace the continuous flux ${\bf F}({\bf U}_{j}^n)$ with a numerical flux $\widehat {\bf F} ( \widehat {\bf U}{j}^{n,L}, \widehat {\bf U}_{j}^{n,R} )$.

If the IDP limiter is not activated near the interface $x_j$, then $\widehat {\bf U}_{j}^{n,L} = \widehat {\bf U}_{j}^{n,R}$, and by consistency, the numerical flux $\widehat {\bf F} ( \widehat {\bf U}_{j}^{n,L}, \widehat {\bf U}_{j}^{n,R} )$ will automatically reduce to the continuous flux ${\bf F}({\bf U}_{j}^n)$.

To achieve the IDP property for the updated cell averages, we proceed with the modified point values in a manner consistent with classical finite volume methods:
\begin{align}\label{cellmodifiedscalar}
	\overline{\bf U}_{j+\frac12}^{n+1} = \overline{\bf U}_{j+\frac12}^{n} - \frac{\Delta t^n}{\Delta x_{j+\frac12}} \left( \widehat{\bf F}( \widehat{\bf U}_{j+1}^{n,L}, \widehat{\bf U}_{j+1}^{n,R} ) - \widehat{\bf F}( \widehat{\bf U}_{j}^{n,L}, \widehat{\bf U}_{j}^{n,R} ) \right),
\end{align}
where $\widehat{\bf F}({\bf U}^L, {\bf U}^R)$ is an IDP numerical flux satisfying consistency $\widehat{\bf F}({\bf U}, {\bf U}) = {\bf F}({\bf U})$. 

A numerical flux is called IDP if its corresponding 1D three-point first-order scheme is IDP, i.e., for any ${\bf U}_1, {\bf U}_2, {\bf U}_3 \in G$, it holds that
\begin{align}\label{eq:IDPflux_1storder}
	{\bf U}_2 - \frac{\Delta t}{\Delta x} \left( \widehat{\bf F}({\bf U}_2, {\bf U}_3) - \widehat{\bf F}({\bf U}_1, {\bf U}_2) \right) \in G,
\end{align}
under a suitable CFL condition $\lambda_{\max} \Delta t \le c_0 \Delta x$, where $\lambda_{\max}$ denotes the maximum characteristic speed, and $c_0$ is the maximum allowable CFL number for the 1D first-order scheme to be IDP. For example, typically $c_0 = 1$ for the local Lax--Friedrichs flux. There are many suitable IDP numerical fluxes, such as the Lax--Friedrichs flux, Godunov flux, and Harten--Lax--van Leer flux. In our numerical experiments, we simply use the local Lax--Friedrichs flux.

Using a standard convex decomposition technique \cite{zhang2010maximum}, 
it is straightforward to verify that the updated cell averages given by the modified PAMPA scheme \eqref{cellmodifiedscalar} are IDP, i.e., $\overline{\mathbf{U}}_{j+\frac12}^{n+1} \in G$, as long as the CFL number is less than or equal to $\frac{1}{6}$.

\begin{thm}
	The cell averages $\overline{\mathbf{U}}_{j+\frac12}^{n+1}$, evolved by the modified PAMPA scheme \eqref{cellmodifiedscalar} with an IDP numerical flux, are preserved within the invariant domain $G$, under the CFL condition 
	\begin{align}\label{eq:PAMPA_CFL}
		\lambda_{\max}  \frac{\Delta t^n}{\Delta x_{j+\frac12}} \le \frac{1}{6}.
	\end{align}
\end{thm}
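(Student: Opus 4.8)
The plan is to reuse the convex-combination argument already established for the unlimited scheme in \eqref{eq:convex}, but now bookkeeping the two (possibly distinct) interface states produced by the Step-1 limiter. First I would substitute the limited cell average decomposition \eqref{CAD2} into the update \eqref{cellmodifiedscalar} and regroup, so that the weight $\tfrac{2}{3}=\tfrac{4}{6}$ stays on the midpoint value $\widehat{\bf U}_{j+\frac12}^n$ while the remaining weight $\tfrac13$ multiplies a single Lax--Friedrichs-like expression built from the two endpoint states and the two interface fluxes:
\[
\overline{\bf U}_{j+\frac12}^{n+1} = \frac{2}{3}\,\widehat{\bf U}_{j+\frac12}^n + \frac{1}{3}\,B,
\qquad
B := \frac{\widehat{\bf U}_j^{n,R}+\widehat{\bf U}_{j+1}^{n,L}}{2} - \frac{3\Delta t^n}{\Delta x_{j+\frac12}}\left(\widehat{\bf F}(\widehat{\bf U}_{j+1}^{n,L},\widehat{\bf U}_{j+1}^{n,R}) - \widehat{\bf F}(\widehat{\bf U}_{j}^{n,L},\widehat{\bf U}_{j}^{n,R})\right).
\]
Since $\widehat{\bf U}_{j+\frac12}^n\in G$ by Step~1, by convexity of $G$ the whole task reduces to showing $B\in G$.

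The heart of the argument is that $B$ involves \emph{two distinct} states $\widehat{\bf U}_j^{n,R}\neq\widehat{\bf U}_{j+1}^{n,L}$ whenever the limiter is active, so it is not directly in the three-point form \eqref{eq:IDPflux_1storder}. The key trick (a standard convex decomposition in the spirit of \cite{zhang2010maximum}) is to introduce a fictitious interior flux $\widehat{\bf F}(\widehat{\bf U}_j^{n,R},\widehat{\bf U}_{j+1}^{n,L})$ across the cell, adding and subtracting it to split $B=\tfrac12 B_1+\tfrac12 B_2$ with
\[
B_1 := \widehat{\bf U}_j^{n,R} - \frac{6\Delta t^n}{\Delta x_{j+\frac12}}\left(\widehat{\bf F}(\widehat{\bf U}_j^{n,R},\widehat{\bf U}_{j+1}^{n,L}) - \widehat{\bf F}(\widehat{\bf U}_j^{n,L},\widehat{\bf U}_j^{n,R})\right),
\]
\[
B_2 := \widehat{\bf U}_{j+1}^{n,L} - \frac{6\Delta t^n}{\Delta x_{j+\frac12}}\left(\widehat{\bf F}(\widehat{\bf U}_{j+1}^{n,L},\widehat{\bf U}_{j+1}^{n,R}) - \widehat{\bf F}(\widehat{\bf U}_j^{n,R},\widehat{\bf U}_{j+1}^{n,L})\right),
\]
where the interior flux cancels upon averaging. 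Each $B_i$ is now exactly a monotone first-order three-point update of the form \eqref{eq:IDPflux_1storder}: $B_1$ is centered at $\widehat{\bf U}_j^{n,R}$ with neighbors $\widehat{\bf U}_j^{n,L},\widehat{\bf U}_{j+1}^{n,L}$, and $B_2$ is centered at $\widehat{\bf U}_{j+1}^{n,L}$ with neighbors $\widehat{\bf U}_j^{n,R},\widehat{\bf U}_{j+1}^{n,R}$. All four neighboring interface values lie in $G$ (each being a limited endpoint value from its own cell, by Step~1), so the IDP property of $\widehat{\bf F}$ applies to both.

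The one point requiring care is the CFL bookkeeping. The effective time step inside $B_1,B_2$ is $6\Delta t^n$ rather than $\Delta t^n$, because the prefactor $\tfrac13$ combines with the splitting factor $\tfrac12$. Hence I invoke the IDP flux hypothesis $\lambda_{\max}\Delta t\le c_0\Delta x$ from \eqref{eq:IDPflux_1storder} with $\Delta t\to 6\Delta t^n$, which yields $B_1,B_2\in G$ precisely when $\lambda_{\max}\tfrac{6\Delta t^n}{\Delta x_{j+\frac12}}\le c_0$, i.e.\ $\lambda_{\max}\tfrac{\Delta t^n}{\Delta x_{j+\frac12}}\le\tfrac{c_0}{6}$; for the local Lax--Friedrichs flux $c_0=1$, this is exactly \eqref{eq:PAMPA_CFL}. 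Convexity of $G$ then gives $B=\tfrac12 B_1+\tfrac12 B_2\in G$, and a second application of convexity to $\overline{\bf U}_{j+\frac12}^{n+1}=\tfrac23\widehat{\bf U}_{j+\frac12}^n+\tfrac13 B$ completes the proof. The main obstacle is conceptual rather than computational: recognizing the fictitious-interior-flux splitting that converts the two-endpoint expression $B$ into two admissible three-point monotone updates; once \eqref{CAD2} and the IDP flux property are in hand, the rest is routine. The same argument carries over verbatim to systems since it uses only convexity of $G$ and the IDP flux property.
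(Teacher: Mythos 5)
Your proposal is correct and takes essentially the same route as the paper: after substituting the CAD \eqref{CAD2}, your fictitious-interior-flux splitting produces exactly the paper's intermediate states, with your $B_1$ and $B_2$ coinciding with $\widetilde{\mathbf{U}}_{j}^{n,R}$ and $\widetilde{\mathbf{U}}_{j+1}^{n,L}$ in \eqref{eq:AA}--\eqref{eq:BB}, each a three-point IDP update with effective time step $6\Delta t^n$, and convexity of $G$ finishing the argument. The only cosmetic differences are that you group the combination as $\frac{2}{3}\widehat{\mathbf{U}}_{j+\frac12}^{n}+\frac{1}{3}B$ before splitting, whereas the paper writes the weights $\frac16,\frac46,\frac16$ directly, and that your explicit tracking of the general CFL constant $c_0/6$ is a slightly more careful statement of the same condition \eqref{eq:PAMPA_CFL}.
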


\begin{proof}
	The local scaling IDP limiter ensures that $\widehat{\mathbf{U}}_j^{n,R}, \widehat{\mathbf{U}}_{j+\frac12}^{n}, \widehat{\mathbf{U}}_{j+1}^{n,L} \in G$, and moreover, retains the CAD \eqref{CAD2}. Substituting the CAD \eqref{CAD2} into the modified PAMPA scheme \eqref{cellmodifiedscalar}, we obtain 
	\begin{align*}
		\overline{\mathbf{U}}_{j+\frac12}^{n+1} &= \overline{\mathbf{U}}_{j+\frac12}^n - \frac{\Delta t^n}{\Delta x_{j+\frac12}} \Big( \widehat{\bf F}( \widehat{\mathbf{U}}_{j+1}^{n,L}, \widehat{\mathbf{U}}_{j+1}^{n,R} ) - \widehat{\bf F}( \widehat{\mathbf{U}}_{j}^{n,L}, \widehat{\mathbf{U}}_{j}^{n,R} ) \Big) \\
		&= \frac{1}{6} \widehat{\mathbf{U}}_j^{n,R} + \frac{4}{6} \widehat{\mathbf{U}}_{j+\frac12}^{n} + \frac{1}{6} \widehat{\mathbf{U}}_{j+1}^{n,L} - \frac{\Delta t^n}{\Delta x_{j+\frac12}} \Big( \widehat{\bf F}( \widehat{\mathbf{U}}_{j+1}^{n,L}, \widehat{\mathbf{U}}_{j+1}^{n,R} ) - \widehat{\bf F}( \widehat{\mathbf{U}}_{j}^{n,L}, \widehat{\mathbf{U}}_{j}^{n,R} ) \Big) \\
		&= \frac{1}{6} \widetilde{\mathbf{U}}_{j}^{n,R} + \frac{1}{6} \widetilde{\mathbf{U}}_{j+1}^{n,L} + \frac{4}{6} \widehat{\mathbf{U}}_{j+\frac12}^{n},
	\end{align*}
	with 
	\begin{align}\label{eq:AA}
		\widetilde{\mathbf{U}}_{j}^{n,R} :=  \widehat{\mathbf{U}}_{j}^{n,R} - 6 \frac{\Delta t^n}{\Delta x_{j+\frac12}} \Big( \widehat{\bf F}( \widehat{\mathbf{U}}_{j}^{n,R}, \widehat{\mathbf{U}}_{j+1}^{n,L} ) - \widehat{\bf F}( \widehat{\mathbf{U}}_{j}^{n,L}, \widehat{\mathbf{U}}_{j}^{n,R} ) \Big),
	\end{align}
	and
	\begin{align}\label{eq:BB}
		\widetilde{\mathbf{U}}_{j+1}^{n,L} := \widehat{\mathbf{U}}_{j+1}^{n,L} - 6 \frac{\Delta t^n}{\Delta x_{j+\frac12}} \Big( \widehat{\bf F}( \widehat{\mathbf{U}}_{j+1}^{n,L}, \widehat{\mathbf{U}}_{j+1}^{n,R} ) - \widehat{\bf F}( \widehat{\mathbf{U}}_{j}^{n,R}, \widehat{\mathbf{U}}_{j+1}^{n,L} ) \Big).
	\end{align}
	Note that \eqref{eq:AA} and \eqref{eq:BB} are formally the first-order schemes with the IDP numerical flux and a larger time step size of $6 \Delta t^n$. Thanks to the IDP property of the adopted numerical flux \eqref{eq:IDPflux_1storder}, we have 
	$$
	\widetilde{\mathbf{U}}_{j}^{n,R},~~\widetilde{\mathbf{U}}_{j+1}^{n,L} \in G,
	$$
	under the CFL condition \eqref{eq:PAMPA_CFL}. Therefore, the updated cell average $\overline{\mathbf{U}}_{j+\frac12}^{n+1}$ remains in $G$ because it is a convex combination of elements in $G$.
\end{proof}

\begin{rem}
The IDP limiter, when activated in a cell, modifies the continuous point value at the interface into two distinct values, automatically detecting that the interface may represent a discontinuity (e.g., shock). If the local scaling IDP limiter is not activated in cells $I_{j-\frac12}$ and $I_{j+\frac12}$, then $\widehat{\mathbf{U}}_{j}^{n,L} = \widehat{ \mathbf{U}}_{j}^{n,R} = \mathbf{U}_{j}^{n}$, and the scheme reduces to the original cell-average scheme in PAMPA with continuous flux. Thus, the numerical flux is applied in regions where physical bounds are violated, typically near strong shocks. {\bf The numerical viscosity in the flux depends on the jumps of the limited endpoint values: the more severe the bound violation, the stronger the limiter and the greater the numerical viscosity.}
\end{rem}

\subsection{Step 3: Unconditionally Limiter-Free IDP Scheme for Updating Point Values}

Thanks to the powerful of the PAMPA framework in combining conservative and non-conservative formulations of the same equations, 
this framework provide a great flexibility for us to use various non-conservative formulations for scheme of the point values. 
Typically, we can find a suitable set of variables ${\bf W}$, such that 
the range of the mapping $ {\mathbf W} = \Psi(\mathbf{U})$ is $\mathbb R^d$, i.e, $\Psi(G) = \mathbb R^d$. 
In the words, for any ${\bf W} \in \mathbb R^d$, we always have ${\mathbf U} = \Psi^{-1}(\mathbf{W}) \in G$. 
This automatically ensures the IDP property of the point values. 

To be more specific, let us take the compressible Euler equations as an example for illustration. 
For the Euler system, there are many choices for the variables \(\mathbf{W}\). 
Instead of using the standard primitive variables, we choose 
a density-like quantity, the fluid velocity, and the specific entropy:
\begin{align}
	\mathbf{W} : = \left(  q, v, s  \right)^T  \qquad \text{with} \quad q:=\ln \left( e^{\frac{\rho}{\rho_{\rm ref}}} -1 \right), \quad s = \ln p - \gamma \ln \rho,  
\end{align}
where $\gamma$ is the adiabatic index, and $\rho_{\rm ref}$ is a reference density taken as one in our computations (it can also be adjusted to accommodate the scale of density). The associated non-conservative formulation in these variables is given by
\begin{align}\label{Eulerentropy0}
	\frac{\partial \mathbf{W}}{\partial t}  + \mathbf{J} \frac{\partial \mathbf{W}   }{\partial x} 
	= {\bf 0} \quad \text{with} \quad \mathbf{J}:=\begin{pmatrix}v&   \frac{ e^{\frac{\rho}{\rho_{\rm ref}}-q}\rho }{\rho_{\rm ref}}  &0\\  
   \frac{ \gamma \rho_{\rm ref} p  }{e^{\frac{\rho}{\rho_{\rm ref}}-q}\rho^2  }     &v& \frac{p}{\rho}  \\0&0&v\end{pmatrix}.
\end{align}
For any $\mathbf{W} \in \mathbb R^3$, we always have 
$$
\rho = \rho_{\rm ref} \ln\left( e^q + 1 \right) > 0, \qquad p = \rho^\gamma e^s > 0. 
$$
This reformulation of density is motivated by the Softplus function $\ln (1+e^x)$ in machine learning, which is often used as an activation function in neural networks.

The point values are evolved by 
\begin{align}\label{eq:pointPAMPA}
	\mathbf{W}_{j}^{n+1} = \mathbf{W}_j^{n} - \Delta t^n (\mathbf{\Phi}_{j+\frac{1}{2}}^{n,-} + \mathbf{\Phi}_{j-\frac{1}{2}}^{n,+}),   \qquad   {\mathbf U}_{j}^{n+1} = \Psi^{-1}(\mathbf{W}_{j}^{n+1})
\end{align}
with
\begin{align*}
	&{\bf \Phi}_{j+\frac12}^{n,-} = \mathbf{J}_j^{n,-} \frac{\delta_j^- {\mathbf W}^n}{\Delta x_{j+\frac12} / 2}, \qquad {\bf \Phi}_{j-\frac12}^{n,+} = \mathbf{J}_{j}^{n,+} \frac{\delta_{j}^{+} {\mathbf W}^n}{\Delta x_{j-\frac12} / 2},
	\\
	&	\delta_{j}^{-} {\mathbf W}^n = -\frac{3}{2} {\mathbf W}_j^n + 2 \widehat {\mathbf W}^n_{j+\frac{1}{2}} - \frac{{\mathbf W}^n_{j+1}}{2}, \qquad 
	\delta_{j}^{+} {\mathbf W}^n = \frac{{\mathbf W}^n_{j-1}}{2} - 2 \widehat {\mathbf W}^n_{j-\frac{1}{2}} + \frac{3}{2} {\mathbf W}^n_{j}.
\end{align*}
where $\widehat {\mathbf W}^n_{j-\frac{1}{2}} = \Psi ( \widehat {\mathbf U}^n_{j-\frac{1}{2}} )$ is the modified midpoint values, and 
\begin{align*}
	\mathbf{J}_j^{n,\pm} = \frac{ \mathbf{J}( {\mathbf U}_j^{n}) \pm \alpha_j {\bf I}}{2},
\end{align*}
with $\alpha_j$ being an appropriate upper bound for the maximum wave speed, defined as the maximum of the spectral radii of the Jacobian matrices $\mathbf{J}( {\mathbf U}_j^{n})$, $\mathbf{J}(\widehat {\mathbf U}_{j-\frac12}^{n})$, and $\mathbf{J}(\widehat {\mathbf U}_{j+\frac12}^{n})$. To ensure the robustness of the scheme \eqref{eq:pointPAMPA} and the well-posedness of the discrete problem, we must ensure the fundamental hyperbolicity, i.e., that the Jacobian matrix $\mathbf{J}({\mathbf U}_j^{n})$ is real diagonalizable. Since $\mathbf{J}$ is similar to $\frac{\partial \mathbf{F}(\mathbf{U})}{\partial \mathbf{U}}$, which is real diagonalizable if ${\mathbf U}_j^{n} \in G$, it follows that $\mathbf{J}({\mathbf U}_j^{n})$ is real diagonalizable.

For the scalar conservation law \eqref{IRofEuler} with the invariant domain $G=[U_{\min}, U_{\max}]$, we consider the following mapping: 
\begin{align}\label{eq:scalarmapping}
	u = \Psi^{-1}(w) =\left( U_{\max}-U_{\min} \right) \min\{ {\rm ReLU}\left(w \right),1 \}+U_{\min}, \qquad \forall w \in \mathbb R, 
\end{align}
where ReLU is an activation function in machine leaning defined by
\begin{align*}
    {\rm ReLU}(w)=\max\{0,w\}, \qquad  \forall w \in \mathbb R.
\end{align*}
This mapping ensures that, for any $w \in \mathbb R$, we always have $u \in [U_{\min}, U_{\max}]$. 
It can map the variable \(w\) in \([0,1]\) to the variable \(u\) injectively, though the entire mapping is not globally one-to-one. In practice, we define the inverse of mapping \eqref{eq:scalarmapping} as
$$w = \Psi(u) = 
\begin{cases} 
1, & {\rm if} ~~ u = U_{\max}, \\ 
\frac{u - U_{\min}}{U_{\max} - U_{\min}}, & {\rm if} ~~ U_{\min} < u < U_{\max}, \\ 
0, & {\rm if} ~~ u = U_{\min}.
\end{cases}$$
The non-conservative formulation of \eqref{IRofEuler} in terms of the new variable $w$ is given by 
$$
\frac{\partial w}{\partial t}+ f'( \Psi^{-1}(w) ) \frac{\partial w}{\partial x}=0.
$$
In the scalar case, 
the modified PAMPA scheme for point values reduces to  
\begin{align}\label{scalarpoint}
	w_{j}^{n+1} = w_j^{n} - \Delta t^n \left( \Phi_{j+\frac{1}{2}}^{n,-} +  \Phi_{j-\frac{1}{2}}^{n,+} \right), \qquad u_{j}^{n+1} = \Psi^{-1}(w_{j}^{n+1}). 
\end{align}
where
$$
\Phi_{j+\frac12}^{n,-} = f_j^{n,-} \frac{\delta_j^- w^n}{\Delta x_{j+\frac12} / 2}, \qquad \Phi_{j-\frac12}^{n,+} = f_{j}^{n,+} \frac{\delta_{j}^{+} w^n}{\Delta x_{j-\frac12} / 2},
$$
with the midpoint value 
in 
$\delta_j^\pm w^n$ replaced with the IDP midpoint value $\widehat{w}_{j+\frac12}^n = \Psi ( \widehat{u}_{j+\frac12}^n )$, 
$
f_j^{n,\pm} = \frac12 \left(f'({u}_{j}^{n}) \pm \alpha_{j}\right)
$, and $\alpha_j := \max\{ |f'({u}_{j}^{n})|,|f'(\widehat {u}_{j-\frac12}^{n})|,|f'(\widehat {u}_{j+\frac12}^{n})| \}$.

\section{Suppression of Spurious Oscillations}

Even with the IDP technique, the numerical solutions of PAMPA may exhibit spurious oscillations near strong discontinuities. To achieve essentially non-oscillatory results, techniques commonly used in discontinuous Galerkin methods can be effectively applied. These include the total variation diminishing (TVD) or total variation bounded (TVB) limiters, the oscillation-eliminating (OE) procedure \cite{peng2024oedg}, and the monotonicity-preserving (MP) limiter \cite{SURESH199783}.

In this paper, we suggest two techniques: a new OE procedure and the MP limiter \cite{SURESH199783}. Either technique can suppress spurious oscillations and is applied directly before the local scaling IDP limiter.

\subsection{New OE Procedure for PAMPA Scheme}

The primary advantage of the OE procedure is its ability to retain the compactness and high-order accuracy of the PAMPA scheme. We define
\begin{align}\label{OEtheta}
    \theta_{j+\frac12}^{\text{OE}} = \exp \left( -\frac{\beta_{j+\frac12} \Delta t^n}{\Delta x_{j+\frac{1}{2}}} \sigma_{j+\frac12} \right),
\end{align}
where $\beta_{j+\frac12}$ is a suitable estimate of the local maximum wave speed on $I_{j+\frac12}$, and 
\begin{align}\label{OEsigma}
    \sigma_{j+\frac12} = \begin{cases}
    0,& {\rm if} ~~ \frac{S_R}{S_R-S_L} d_{j+\frac12}^{n,L} - \frac{S_L}{S_R-S_L} d_{j+\frac12}^{n,R}=0,\\
    \frac{\frac{S_R}{S_R-S_L}\eta_{j+\frac12}^{n,L} - \frac{S_L}{S_R-S_L}\eta_{j+\frac12}^{n,R}}{\frac{S_R}{S_R-S_L} d_{j+\frac12}^{n,L} - \frac{S_L}{S_R-S_L} d_{j+\frac12}^{n,R}}. &{\rm otherwise.}
    \end{cases}
\end{align}
The estimated wave speeds are
$ 
S_L = \min \{ \lambda_{\min}(\overline{\bf U}_{j-\frac12}^n), \lambda_{\min}(\overline{\bf U}_{j+\frac12}^n), \lambda_{\min}(\overline{\bf U}_{j+\frac32}^n), 0 \}$ and $
S_R = \max \{ \lambda_{\max}(\overline{\bf U}_{j-\frac12}^n), \lambda_{\max}(\overline{\bf U}_{j+\frac12}^n), \lambda_{\max}(\overline{\bf U}_{j+\frac32}^n), 0 \}$ 
and 
\begin{align*}
\eta_{j+\frac12}^{n,L} &=  \int_{I_{j+\frac12}} \left( p_{j+\frac12}^n(x) - p_{j+\frac12}^{n,L}(x) \right)^2 dx + \frac{1}{3} \Delta x_{j+\frac12}^{5} \left( \partial_{xx} p_{j+\frac12}^n(x) - \partial_{xx} p_{j+\frac12}^{n,L}(x) \right)^2, 
\\
\eta_{j+\frac12}^{n,R} &= \int_{I_{j+\frac12}} \left( p_{j+\frac12}^n(x) - p_{j+\frac12}^{n,R}(x) \right)^2 dx + \frac{1}{3} \Delta x_{j+\frac12}^{5}  \left( \partial_{xx} p_{j+\frac12}^n(x) - \partial_{xx} p_{j+\frac12}^{n,R}(x) \right)^2 , 
\\
d_{j+\frac12}^{n,L} &= \int_{I_{j+\frac12}} \left( p_{j+\frac12}^{n,L}(x) - \overline{\mathbf{U}}^n_{j+\frac12} \right)^2 +\left( p_{j+\frac12}^{n}(x) - \overline{\mathbf{U}}^n_{j+\frac12} \right)^2 dx + \frac{1}{3} \Delta x_{j+\frac12}^{5} (\partial_{xx} p_{j+\frac12}^{n,L}(x))^2, 
\\
d_{j+\frac12}^{n,R} &= \int_{I_{j+\frac12}} \left( p_{j+\frac12}^{n,R}(x) - \overline{\mathbf{U}}^n_{j+\frac12} \right)^2 +\left( p_{j+\frac12}^{n}(x) - \overline{\mathbf{U}}^n_{j+\frac12} \right)^2 dx + \frac{1}{3} \Delta x_{j+\frac12}^{5} (\partial_{xx} p_{j+\frac12}^{n,R}(x))^2 .
\end{align*}
Here, $p_{j+\frac12}^{n,L}(x)$ and $p_{j+\frac12}^{n,R}(x)$ are obtained by extending the reconstructed polynomials in $I_{j-\frac12}$ and $I_{j+\frac32}$, respectively. The point values in each cell $j$ after applying the OE procedure are updated as follows:
\begin{align}\label{OEprocedure}
    & \text{Left endpoint value:} \quad \widetilde{\mathbf{U}}_{j}^{n,R} := (1 - \theta_{j+\frac12}^{\text{OE}}) \overline{\mathbf{U}}_{j+\frac12}^{n} + \theta_{j+\frac12}^{\text{OE}} \mathbf{U}_j^{n}, \\
    & \text{Right endpoint value:} \quad \widetilde{\mathbf{U}}_{j+1}^{n,L} := (1 - \theta_{j+\frac12}^{\text{OE}}) \overline{\mathbf{U}}_{j+\frac12}^{n} + \theta_{j+\frac12}^{\text{OE}} \mathbf{U}_{j+1}^{n}.
\end{align}

\subsection{MP Limiter for PAMPA Scheme}

The MP limiter is advantageous for its high resolution and ability to retain the original high-order accuracy, whether applied globally to all cells or adaptively to troubled cells identified by a shock indicator. However, it loses the compactness of the PAMPA scheme. The MP limiter essentially serves as the estimate and enforcement of local bounds. 
For scalar conservation laws, the MP limiter modifies the point values as follows:
\begin{align}
    \widetilde{u}_j^{n,L} = \text{median}\left( u_j^n, u^{n,\text{min}}_j, u^{n,\text{max}}_j \right), \label{mplimiter}
\end{align}
where
\begin{equation}\label{eq:6.6}
\begin{aligned}
    u^{n,\text{min}}_j &= \max \left[ \min\left( \overline{u}_{j-\frac12}^n, \overline{u}_{j-\frac32}^n, u^{n,\text{MD}}_j \right), \min\left( \overline{u}_{j-\frac12}^n, u^{n,\text{UL}}_j, u^{n,\text{LC}}_j \right) \right], \\
    u^{n,\text{max}}_j &= \min \left[ \max\left( \overline{u}_{j-\frac12}^n, \overline{u}_{j-\frac32}^n, u^{n,\text{MD}}_j \right), \max\left( \overline{u}_{j-\frac12}^n, u^{n,\text{UL}}_j, u^{n,\text{LC}}_j \right) \right].
\end{aligned}
\end{equation}
In \eqref{eq:6.6}, $u^{n,\text{MD}}_{j} = \frac{1}{2}\left(\overline{u}_{j-\frac12}^n + \overline{u}_{j+\frac12}^n + d^{n,\text{M4}}_{j}\right)$, $u^{n,\text{UL}}_{j} = \overline{u}_{j-\frac12}^n + \alpha \left( \overline{u}_{j-\frac12}^n -  \overline{u}_{j-\frac32}^n\right)$, and $u^{n,\text{LC}}_{j} =  \overline{u}_{j-\frac12}^n + \frac{1}{2} \left( \overline{u}_{j-\frac12}^n - \overline{u}_{j-\frac32}^n\right) + \frac{\beta}{3} d^{n,\text{M4}}_{j-1}$. The quantity $d^{n,\text{M4}}_{j}$ is defined as 
$$d^{n,\text{M4}}_{j}= {\rm minmod}\left(4d_{j-\frac12}^n - d_{j+\frac12}^n, 4d_{j+\frac12}^n - d_{j-\frac12}^n, d_{j-\frac12}^n, d_{j+\frac12}^n\right)$$ with $d_{j+\frac12}^n = \overline{u}_{j+\frac32}^n - 2\overline{u}_{j+\frac12}^n + \overline{u}_{j-\frac12}^n$. 
The point value $\widetilde{u}_{j}^{n,R}$ is modified symmetrically. The parameters $\alpha = 2$ and $\beta = 4$ are used in the numerical tests, as suggested in \cite{SURESH199783}. In each cell $I_{j+\frac12}$, we compute the modified endpoint values $\widetilde{u}_{j}^{n,R}$, $\widetilde{u}_{j+1}^{n,L}$, and the modified midpoint $\widetilde{u}_{j+\frac12}^n = \frac{3}{2} \overline{u}_{j+\frac12}^n - \frac{1}{4} (\widetilde{u}_{j}^{n,R} + \widetilde{u}_{j+1}^{n,L})$. For hyperbolic systems, the MP limiter is applied independently to each component of the variables ${\mathbf W}$.

\subsection{Combining IDP and Oscillation Suppression Techniques}

The OE or MP procedure  is applied before the IDP limiter to suppress spurious oscillations. Specifically, the point values $\{ u_j^n, u^n_{j+\frac12}, u^n_{j+1} \}$ in \eqref{mid}--\eqref{right} are replaced with the OE- or MP-modified values $\{ \widetilde{u}_j^{n,R}, \widetilde{u}^n_{j+\frac12}, \widetilde{u}_{j+1}^{n,L} \}$. The resulting PAMPA scheme is both IDP and effectively suppresses spurious oscillations without compromising accuracy.

\section{Numerical Examples}

This section presents several 1D numerical examples to validate the accuracy and robustness of the proposed IDP PAMPA scheme. The examples cover the advection equation, Burgers' equation, the compressible Euler equations, and the ideal MHD equations. Unless otherwise specified, the third-order SSP multi-step method is used for scalar equations, while the third-order SSP Runge--Kutta method is applied to systems of conservation laws. The CFL number is set to 0.1.

 \subsection{Advection equation}

This subsection presents two examples of the advection equation $u_t + u_x = 0$ with periodic boundary conditions.

\begin{ex}[Smooth problem]
\rm 
The first example evaluates the impact of the IDP limiter and oscillation suppression techniques on the accuracy of the PAMPA scheme. The initial condition is chosen as $u_0(x) = 1 + \sin^4(2\pi x)$ on the domain $[0,1]$. Table \ref{advorderone} lists the $l^1$ errors and corresponding convergence rates for the numerical solution at $t=1$, obtained using PAMPA scheme with different limiters on $N$ uniform cells. The results demonstrate that the IDP limiter and oscillation suppression techniques maintain the desired third-order accuracy.
\end{ex}

\begin{table}[ht]
\centering
\caption{Errors and convergence rates for cell averages (top) and point values (bottom) computed with $N$ uniform cells.}\label{advorderone}
\begin{tabular}{lcccccccc}
\toprule
 &   & \multicolumn{2}{c}{{IDP PAMPA}} & \multicolumn{2}{c}{{IDP PAMPA with OE}} & \multicolumn{2}{c}{{IDP PAMPA with MP}} \\
\cmidrule(lr){3-4} \cmidrule(lr){5-6}  \cmidrule(lr){7-8} 
 &$N$ & Error & Order& Error & Order& Error & Order \\
\midrule
\multirow{4}{*}{} 
& 20   & 2.66e-2 & - & 4.12e-2 & - & 2.68e-2 & - \\
& 40  & 5.60e-3 & 2.25 & 6.60e-3 & 2.63& 5.40e-3 & 2.30 \\
& 80  & 8.62e-4 & 2.69 &8.76e-4 & 2.92 & 8.57e-4 & 2.67 \\
& 160  & 1.09e-4 & 2.98 & 1.09-4 & 3.00& 1.09e-4 & 2.97 \\
& 320  & 1.37e-5 &3.00 & 1.37e-5 & 3.00 & 1.37e-5 &3.00\\
& 640  & 1.71e-6 &3.00 & 1.71e-6 &3.00 & 1.71e-6 &3.00\\
& 1280 & 2.14e-7&3.00 & 2.14e-7 &3.00 & 2.14e-7&3.00\\
\midrule
\multirow{4}{*}{} 
& 20   & 2.94e-2 & - & 4.34e-2 & - & 2.93e-2 & - \\
& 40  & 5.50e-3 & 2.42 & 6.50e-3 & 2.73 & 5.30e-3 & 2.47 \\
& 80  & 8.49e-4 & 2.69 & 8.73e-4 & 2.90 & 8.39e-4 & 2.65 \\
& 160 & 1.09e-4 & 2.96 & 1.09e-4 & 3.00& 1.09e-4 & 2.95 \\
& 320  & 1.36e-5 &3.00 & 1.36e-5 & 3.00 & 1.36e-5 &3.00\\
& 640 & 1.71e-6 &3.00& 1.71e-6 &3.00 & 1.71e-6 &3.00\\
& 1280  & 2.14e-7&3.00 & 2.14e-7 &3.00 & 2.14e-7&3.00\\

% Add more rows as needed
\bottomrule
\end{tabular}
\end{table}

\begin{ex}[Jiang--Shu problem]
\rm
The initial conditions are given by
$$
u_0(x) = \begin{cases}
\frac{1}{6}\left(G_1(x, \beta, z-\delta) + G_1(x, \beta, z+\delta) + 4 G_1(x, \beta, z)\right), & \text{if } -0.8 \leq x \leq -0.6, \\
1, & \text{if } -0.4 \leq x \leq -0.2, \\
1 - |10(x - 0.1)|, & \text{if } 0 \leq x \leq 0.2, \\
\frac{1}{6}\left(G_2(x, \alpha, a-\delta) + G_2(x, \alpha, a+\delta) + 4 G_2(x, \alpha, a)\right), & \text{if } 0.4 \leq x \leq 0.6, \\
0, & \text{otherwise,}
\end{cases}
$$
for \(x \in [-1, 1]\), where 
$$
G_1(x, \beta, z) = e^{-\beta(x - z)^2}, \quad G_2(x, \alpha, a) = \sqrt{\max \left(1 - \alpha^2(x - a)^2, 0\right)},
$$
and the constants are \(a = 0.5\), \(z = -0.7\), \(\delta = 0.005\), \(\alpha = 10\), and \(\beta = \frac{\ln 2}{36 \delta^2}\). This problem is solved for one period, i.e., until \(t = 2\). The results of the PAMPA scheme, both with and without the IDP technique, are compared in Figure \ref{adv1fig}, computed with 400 uniform cells. 
When the IDP limiter is applied, the numerical solution consistently remains strictly within the bounds \(G = [0, 1]\), and spurious oscillations are effectively suppressed. This demonstrates the robustness of the IDP PAMPA scheme in maintaining both stability and adherence to the maximum principle, highlighting its effectiveness in accurately capturing multiple wave phenomena.
\end{ex}

\begin{figure}[h]
    \centering
    \begin{subfigure}[b]{0.48\textwidth} 
        \centering
        \includegraphics[width=\textwidth]{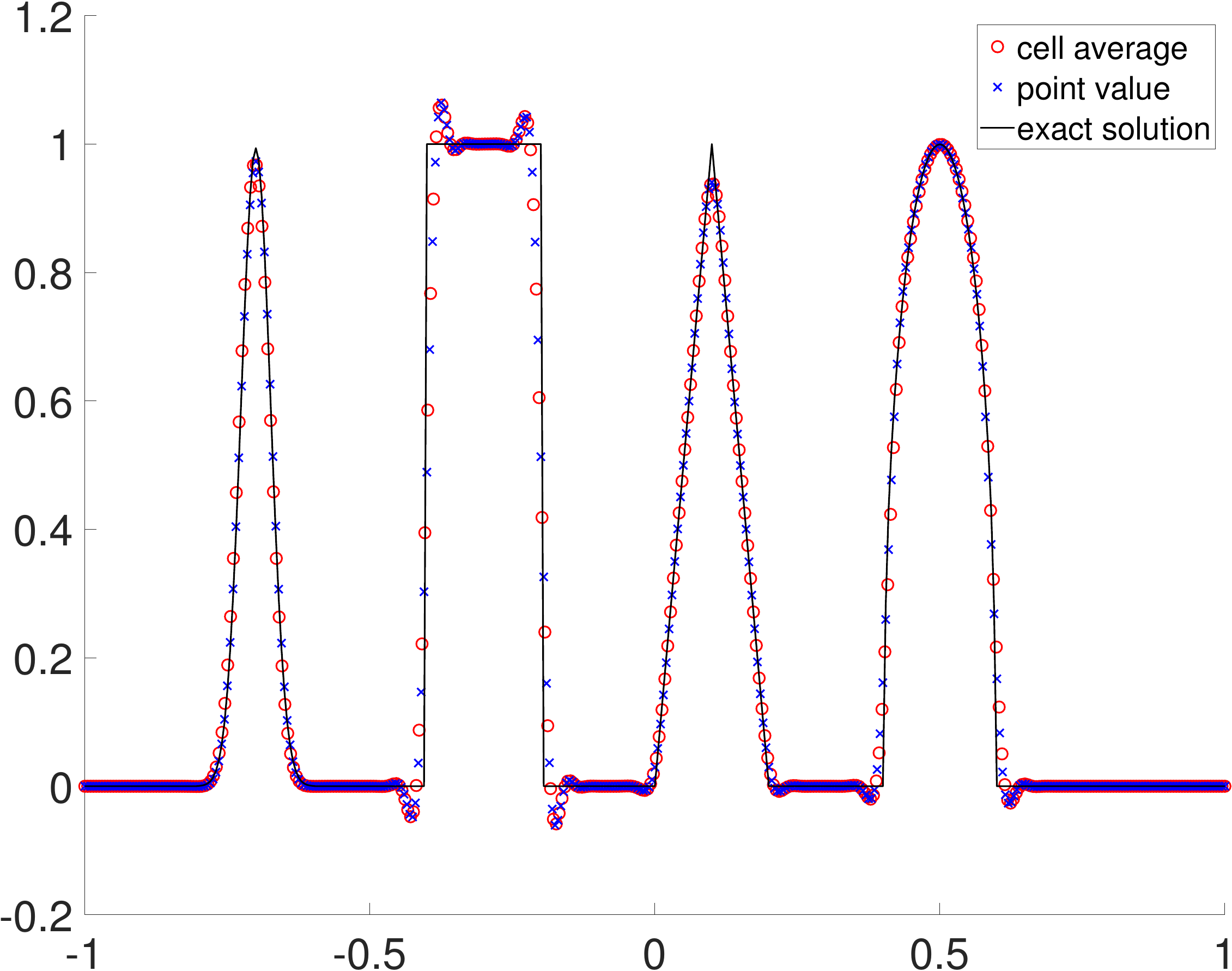}
        \caption{Original PAMPA without IDP}
    \end{subfigure}
    \hfill
    \begin{subfigure}[b]{0.48\textwidth}
        \centering
        \includegraphics[width=\textwidth]{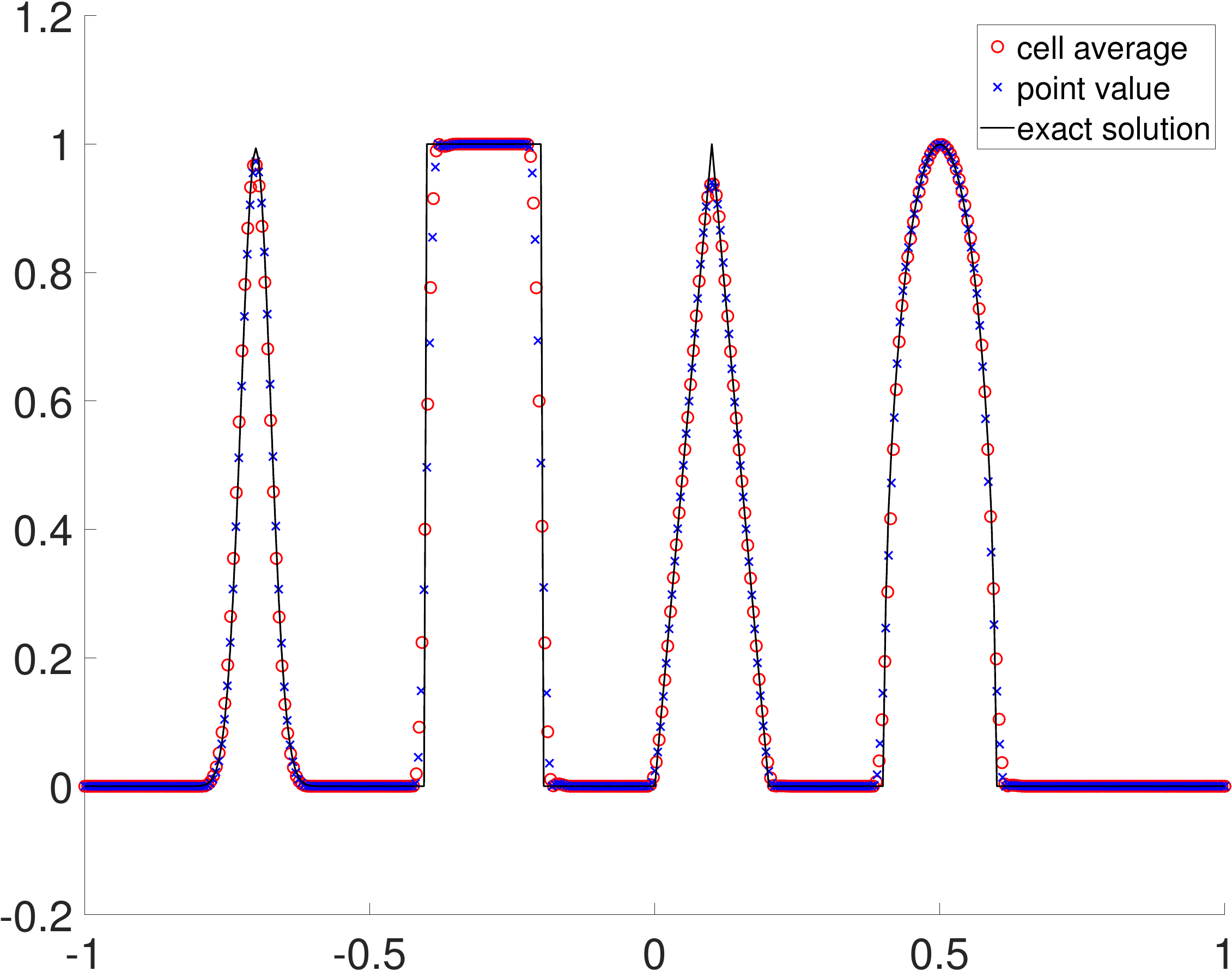}
        \caption{IDP PAMPA}
    \end{subfigure}
    \caption{Example 7.2: Numerical solutions computed by PAMPA schemes with and without IDP technique. }
    \label{adv1fig}
\end{figure}

\subsection{1D Burgers' Equation}
This subsection considers the 1D Burgers' equation 
$
u_t + \left(\frac{u^2}{2} \right)_x = 0
$ 
on the domain $[-1, 1]$ with periodic boundary conditions.

\begin{ex}[Self-Steepening Shock]
\rm
This example is used to test the ability of the IDP PAMPA scheme to handle nonlinear problems effectively. The 1D Burgers' equation is solved with 400 uniform cells until $t = 0.5$, with the initial conditions:
$$
u_0(x) = \begin{cases}
2, & \text{if } |x| < 0.2, \\
-1, & \text{otherwise.}
\end{cases}
$$
As illustrated in Figure \ref{burgersfig}, the original PAMPA scheme generates a spike at the initial discontinuity due to an inaccurate estimation of the upwind direction at the cell interface, as also observed in \cite{duan2024activefluxmethodshyperbolic}. In contrast, our IDP PAMPA scheme effectively addresses these spikes, demonstrating its capability to enhance stability and accurately capture discontinuities. 
\end{ex}

\begin{figure}[h]
\centering
    \begin{subfigure}[b]{0.48\textwidth} 
        \centering
        \includegraphics[width=\textwidth]{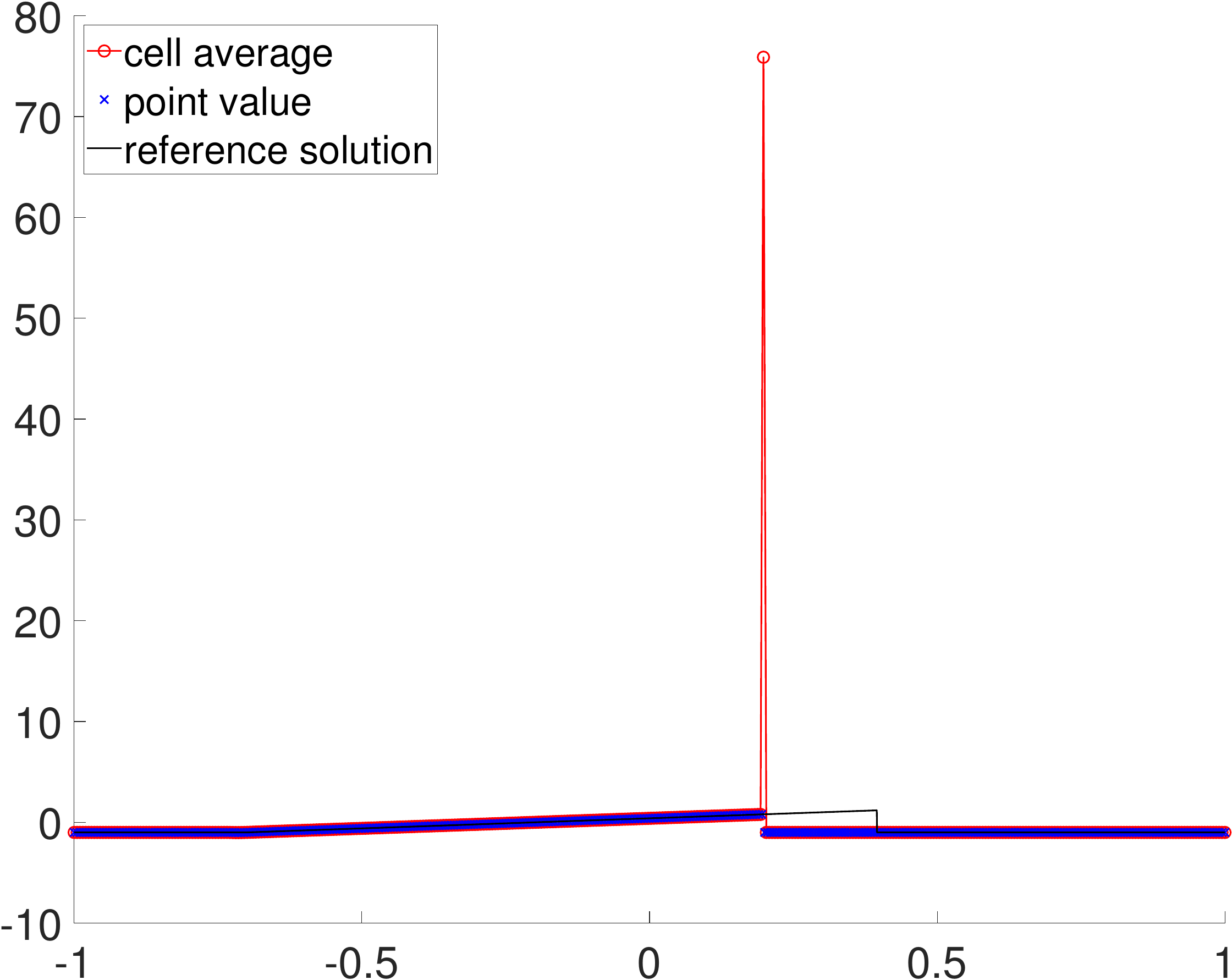}
        \caption{Original PAMPA without IDP}
    \end{subfigure}
    \hfill
    \begin{subfigure}[b]{0.48\textwidth}
        \centering
        \includegraphics[width=\textwidth]{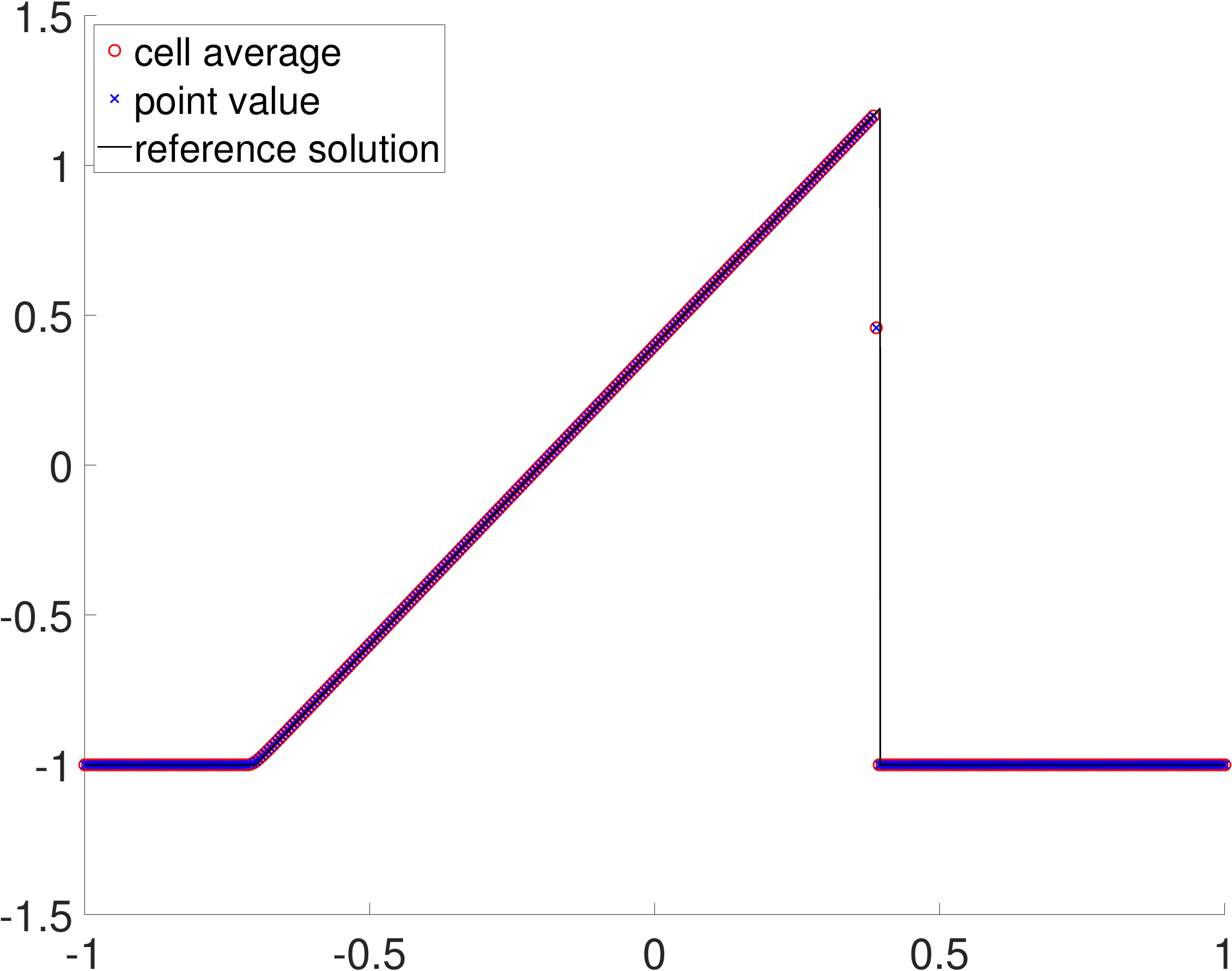}
        \caption{IDP PAMPA}
    \end{subfigure}
    \caption{Numerical results of PAMPA scheme with or without our IDP technique.}
    \label{burgersfig}
\end{figure}

\subsection{Compressible Euler Equations}

This subsection presents several challenging examples of the 1D compressible Euler equations \eqref{Eulerconservation}. As discussed in subsection 5.3, we use the variables $\mathbf{W} = \left( \ln ( e^{\frac{\rho}{\rho_{\rm ref}}} - 1 ), v, s \right)^T$ to evolve point values, thereby ensuring the automatic IDP property. The adiabatic index is taken as $\gamma = 1.4$, except for Example 7.10, where $\gamma = \frac{5}{3}$.

\begin{ex}[Smooth problem]
\rm
This example considers a smooth problem with the exact solution:
\[
\rho(x,t) = 1 + 0.999 \sin(x - t), \quad v(x,t) = 1, \quad p(x,t) = 10^{-8},
\]
which involves very low density and pressure. 
The computational domain is $[0, 2\pi]$ with periodic boundary conditions. The simulations are conducted up to $t = 0.1$. The failure to preserve the positivity of density and pressure can lead to blow-ups in this example if the original PAMPA scheme is used with a coarse mesh. Table \ref{eulerordertest} presents the $l^1$ errors in density and corresponding convergence rates. For time discretization, we employ the third-order SSP multi-step method. We clearly observe the designed third-order accuracy, which is not compromised by the IDP, OE, and MP techniques.

\end{ex}

\begin{table}[ht]
\centering
\caption{Errors and convergence rates for cell averages (top) and point values (bottom) computed with $N$ uniform cells for the 1D compressible Euler equations.}\label{eulerordertest}
\begin{tabular}{lcccccccc}
\toprule
 &   & \multicolumn{2}{c}{{IDP PAMPA}} & \multicolumn{2}{c}{{IDP PAMPA with OE}} & \multicolumn{2}{c}{{IDP PAMPA with MP}} \\
\cmidrule(lr){3-4} \cmidrule(lr){5-6}  \cmidrule(lr){7-8} 
 &$N$ & Error & Order& Error & Order& Error & Order \\
\midrule
\multirow{4}{*}{} 
& 20   & 1.85e-4 & - & 4.70e-4 & - & 1.79e-4 & - \\
& 40  & 6.67e-5 & 1.47 & 6.82e-5 & 2.78 & 6.29e-5 & 1.51 \\
& 80  & 1.07e-5 & 2.64 &1.09e-5& 2.65 & 1.31e-5 & 2.61 \\
& 160  & 1.65e-6 & 2.70 & 1.65-6 & 2.72 & 1.63e-6 & 2.66 \\
& 320  & 2.69e-7 &2.62 & 2.69e-7 & 2.62 & 2.70e-7 &2.60\\
& 640  & 3.76e-8 &2.83 & 3.76e-8 &2.83 & 3.78e-8 &2.84\\
& 1280 & 4.89e-9&2.94 & 4.89e-9 &2.94 & 4.91e-9&2.94\\
\midrule
\multirow{4}{*}{} 
& 20   & 7.18e-4 & - & 4.97e-4 & - & 7.21e-4 & - \\
& 40  & 6.28e-5 & 3.52 & 1.05e-4 & 2.24 & 5.63e-5 & 3.68 \\
& 80  & 1.80e-5 & 1.80 & 1.79e-5 & 2.55 & 1.75e-5 & 1.69 \\
& 160 & 2.77e-6 & 2.70 & 2.77e-6 & 2.69& 2.77e-6 & 2.66 \\
& 320  & 3.85e-7 &2.85 & 3.85e-7 & 2.85 & 3.87e-7 &2.84\\
& 640 &5.12e-8 &2.91& 5.12e-8 &2.91 & 5.13e-8 &2.91\\
& 1280  & 6.57e-9&2.96 & 6.57e-9 &2.96 & 6.58e-9&2.96\\
% Add more rows as needed
\bottomrule
\end{tabular}
\end{table}

\begin{ex}[Sod problem]
\rm
This is a classical Riemann problem used to test the ability of numerical schemes to capture shock wave, contact discontinuity, and rarefaction wave. The computational domain is $[-5,5]$, and the initial conditions are
\[
(\rho,v,p) = 
\begin{cases} 
(1,~0,~1), & \text{if} \quad x \leq 0, \\
(0.125,~0,~0.1), & \text{otherwise}.
\end{cases}
\]
Figure \ref{eulerrhosod} presents the numerical solutions at $t = 1.3$ obtained using the IDP PAMPA scheme with 200 uniform cells. The MP limiter is applied to suppress nonphysical oscillations. As shown, all of the shock waves, contact discontinuities, and rarefaction waves are well captured without introducing numerical oscillations.
\end{ex}

\begin{figure}[h]
    \centering
    \begin{subfigure}[b]{0.32\textwidth} 
        \centering
        \includegraphics[width=\textwidth]{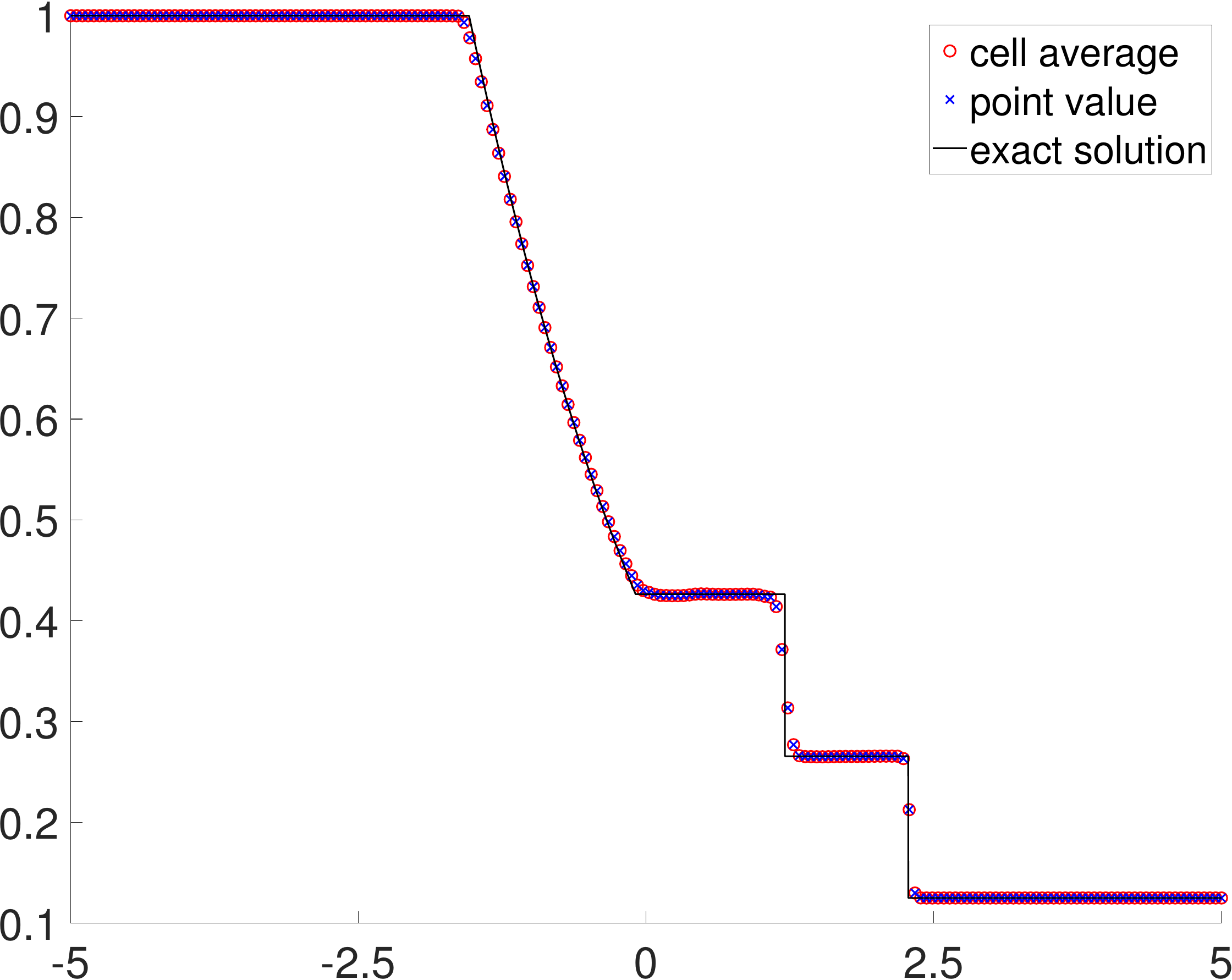}
        \caption{density}
    \end{subfigure}
    \hfill
    \begin{subfigure}[b]{0.32\textwidth}
        \centering
        \includegraphics[width=\textwidth]{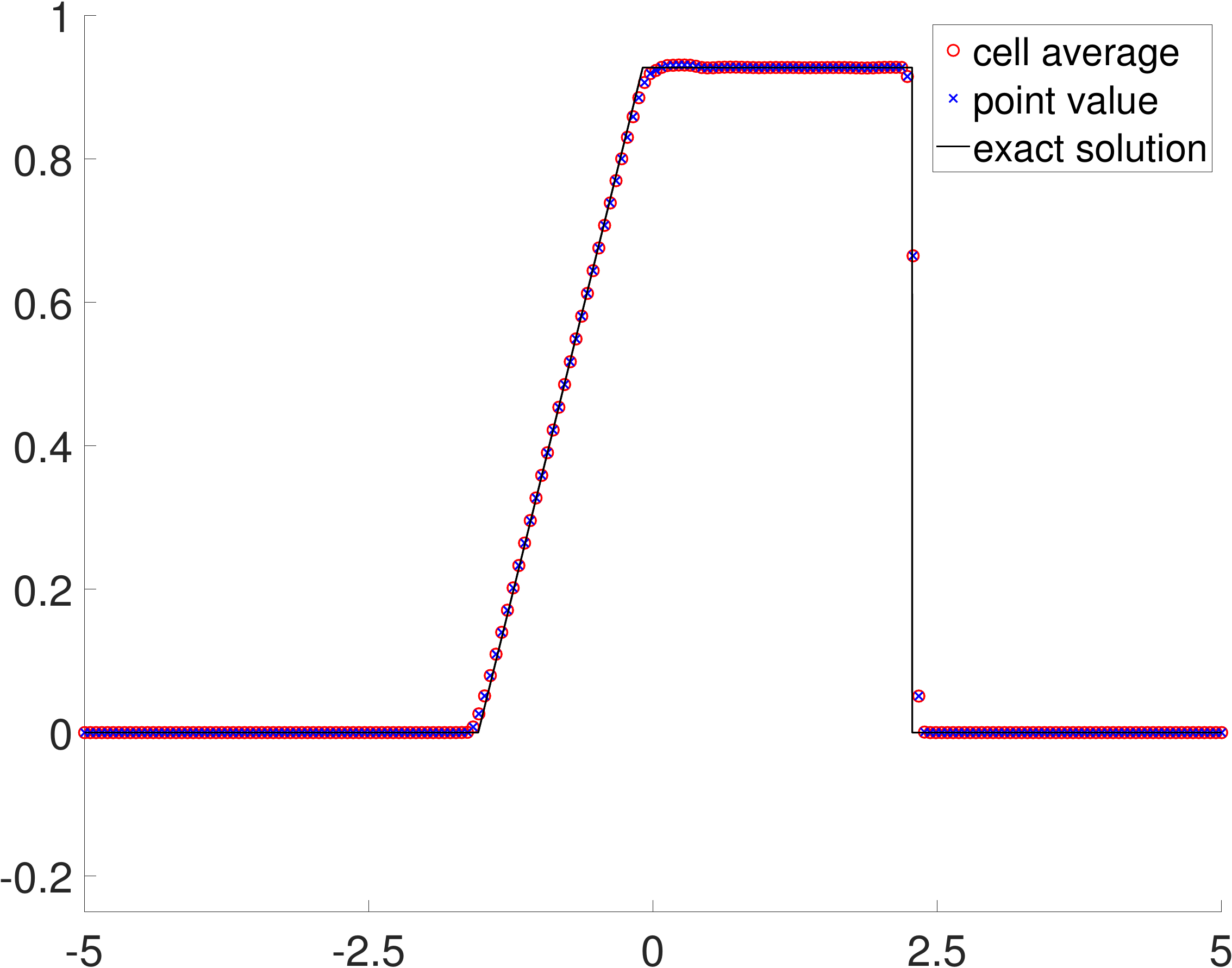}
        \caption{velocity}
    \end{subfigure}
    \hfill
        \begin{subfigure}[b]{0.32\textwidth}
        \centering
        \includegraphics[width=\textwidth]{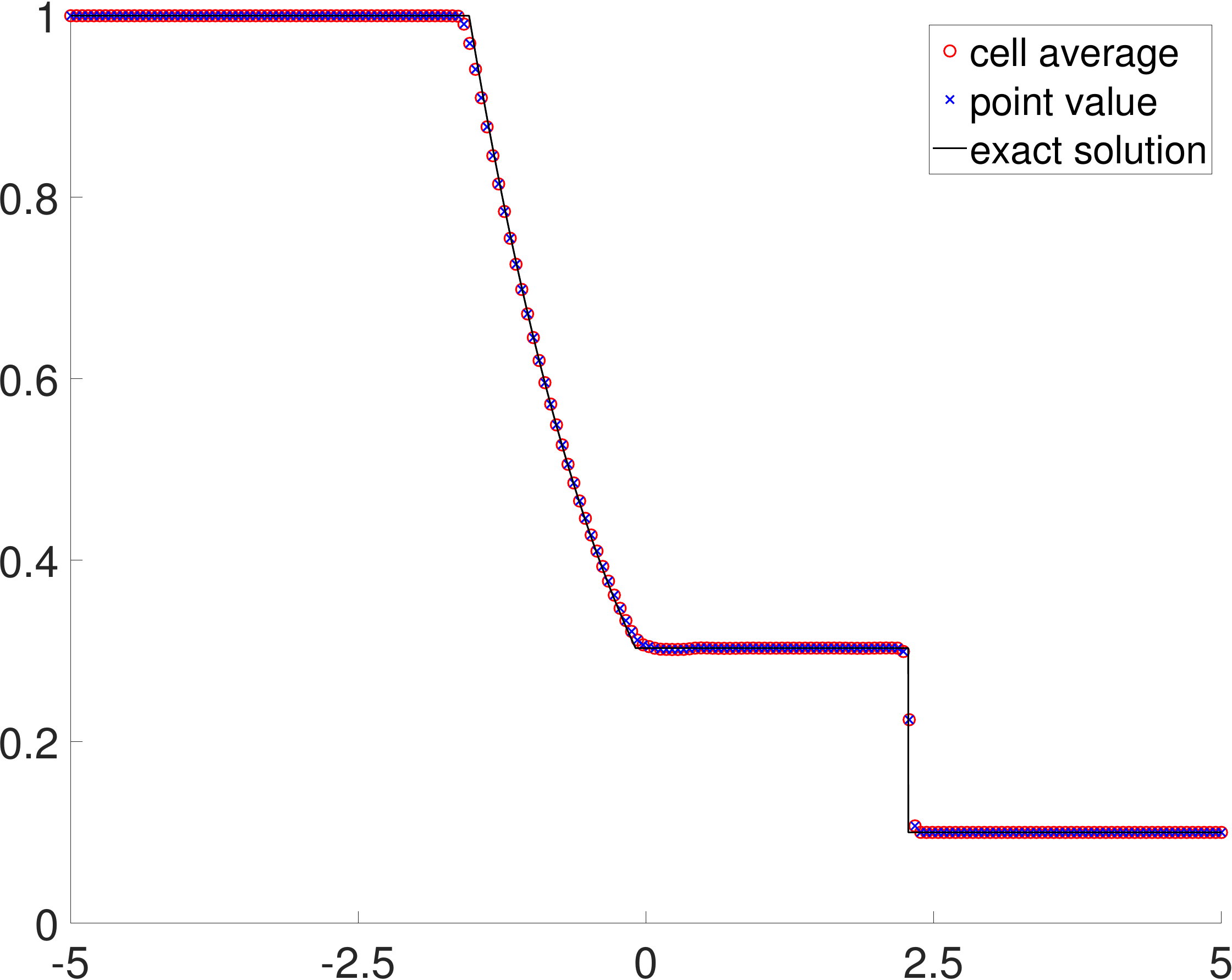}
        \caption{pressure}
    \end{subfigure} 
    \caption{Example 7.5: Numerical solutions of IDP PAMPA scheme with 200 cells.}
    \label{eulerrhosod}
\end{figure}

\begin{ex}[Interaction of two blast waves]
\rm
This example simulates the interaction of two blast waves in the domain $[0,1]$ with reflective boundary conditions. The initial setup consists of three distinct pressure regions: the left and right regions have high pressures, while the middle region has low pressure. The initial conditions are described as follows:
\[
(\rho,v,p) = 
\begin{cases} 
(1,~0,~10^3), & \text{if} \quad 0 \le x < 0.1, \\
(1,~0,~10^{-2}), & \text{if} \quad 0.1 < x < 0.9, \\
(1,~0,~10^2), & \text{if} \quad 0.9 < x \le 1.
\end{cases}
\]
The numerical simulation is conducted up to $t = 0.038$ using the IDP PAMPA scheme with the OE procedure on a mesh of 800 uniform cells, while the reference solution is computed using the local Lax--Friedrichs scheme with 10,000 uniform cells. The scheme  without the IDP technique would generate negative pressure and breakdown. As shown in Figure \ref{eulerrhoblast}, the blast waves are correctly resolved with high resolution, and the numerical solution obtained from the IDP PAMPA scheme agrees well with the reference solution.
\end{ex}

\begin{figure}[h]
    \centering
    \begin{subfigure}[b]{0.32\textwidth} 
        \centering
        \includegraphics[width=\textwidth]{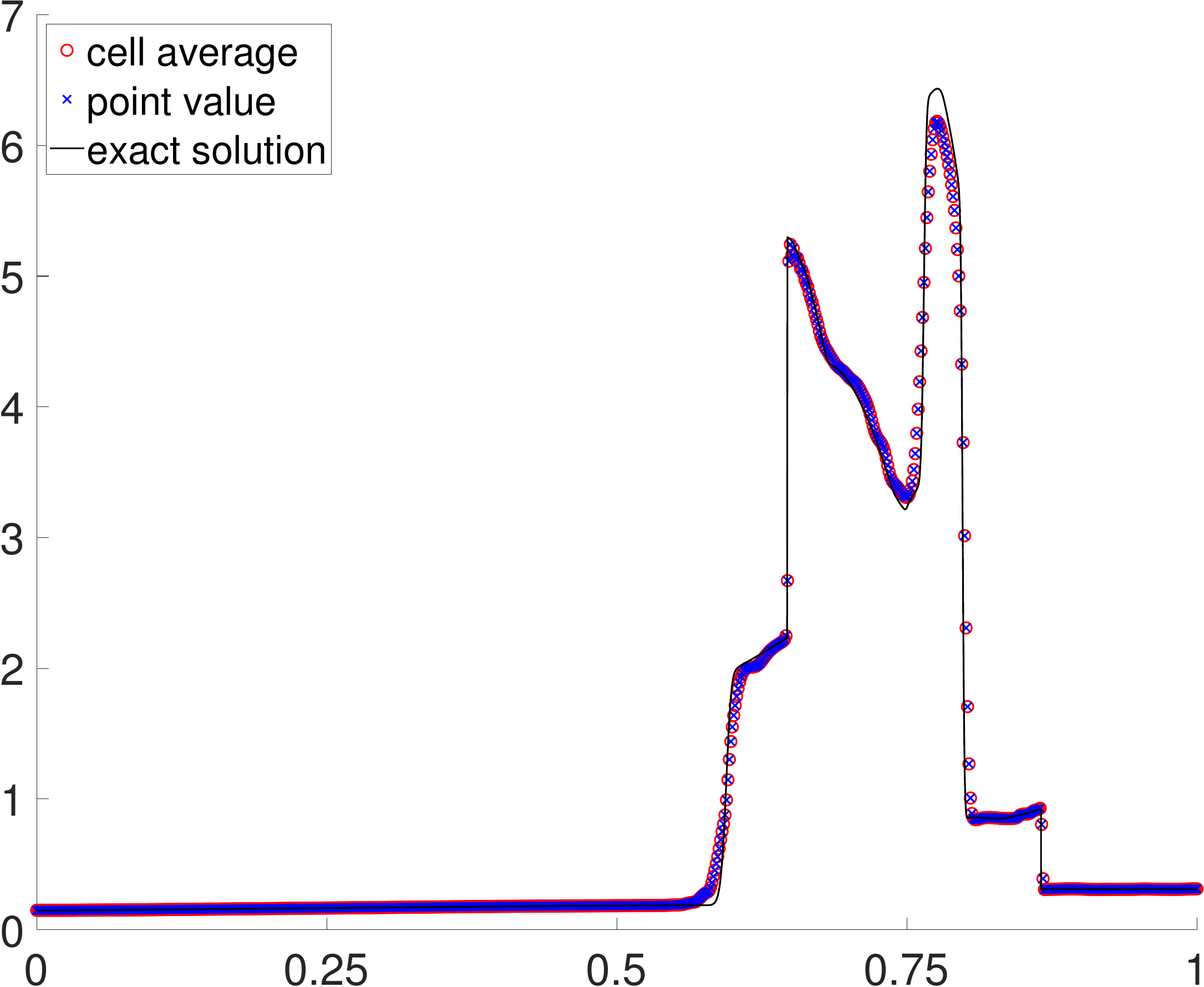}
        \caption{density}
    \end{subfigure}
    \hfill
    \begin{subfigure}[b]{0.32\textwidth}
        \centering
        \includegraphics[width=\textwidth]{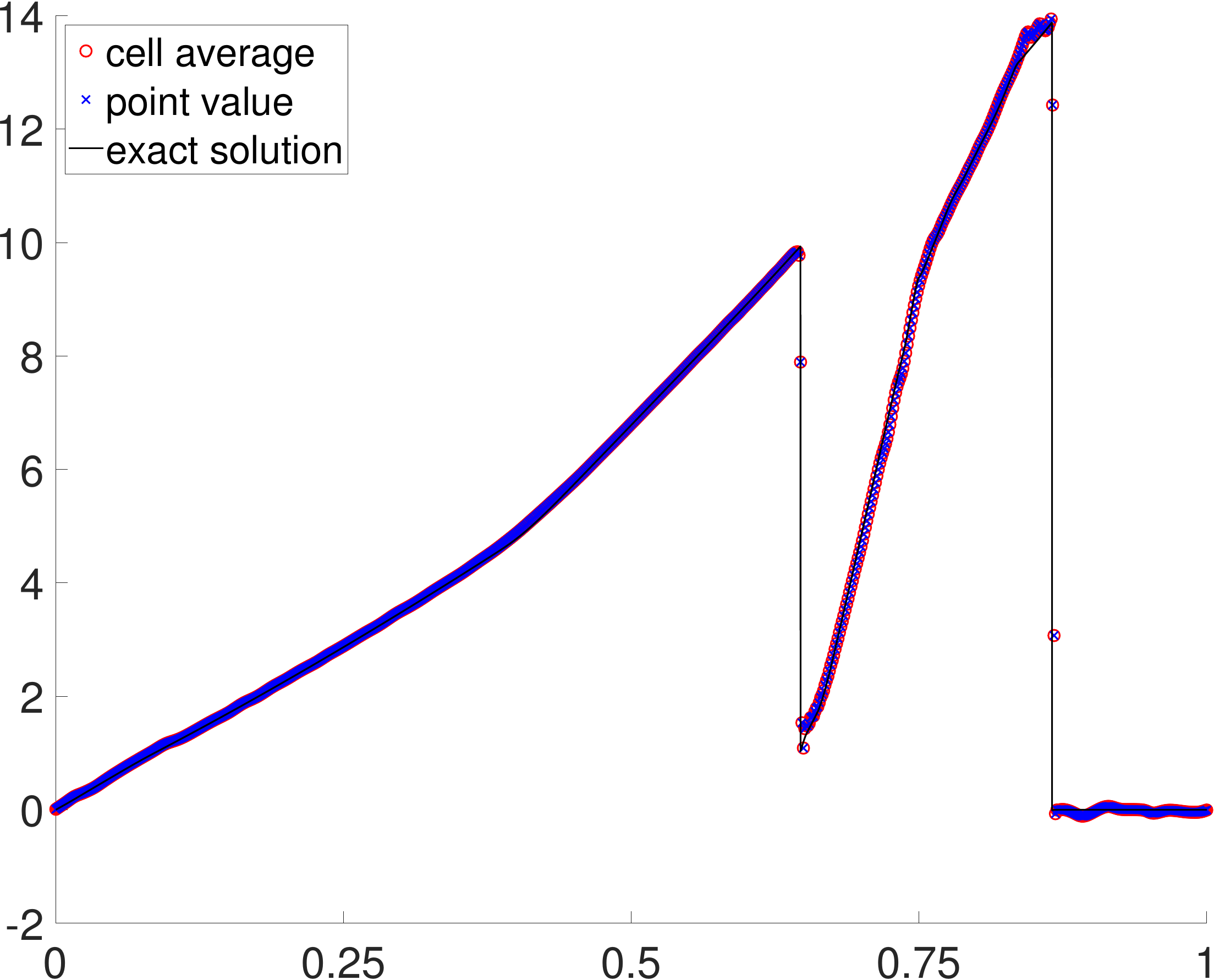}
        \caption{velocity}
    \end{subfigure}
    \hfill
        \begin{subfigure}[b]{0.32\textwidth}
        \centering
        \includegraphics[width=\textwidth]{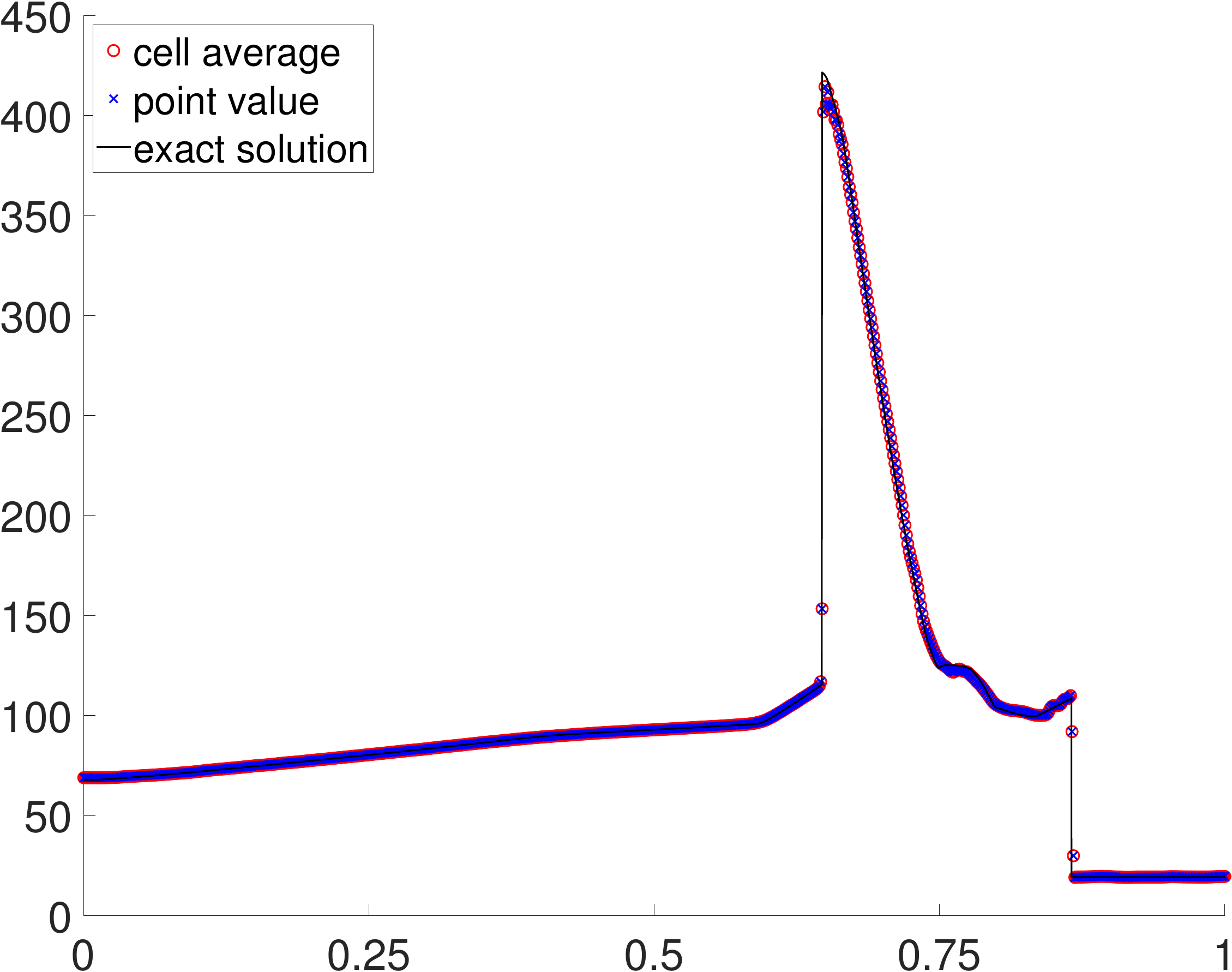}
        \caption{pressure}
    \end{subfigure}    
    \caption{Example 7.6: Numerical solutions of IDP PAMPA scheme with 800 cells.}
    \label{eulerrhoblast}
\end{figure}

\begin{ex}[Double rarefaction problem]
\rm
This example is initialized symmetrically with two rarefaction waves propagating outward from the center of the domain $[-1,1]$:
$$
(\rho,v,p)=\begin{cases}(7,~-1,~0.2), & {\rm if} ~~ -1<x<0,\\
(7,~1,~0.2),& {\rm if} ~~ 0<x<1.
\end{cases}
$$
Outflow boundary conditions are applied. The simulation is performed using the IDP PAMPA scheme with 400 uniform cells, and the MP limiter is used to suppress nonphysical overshoots. The numerical result at $t=0.6$ is shown in Figure \ref{eulerrhodoublerarefaction}. 
Since the exact solution involves a vacuum region, the IDP technique is necessary to prevent the appearance of negative pressures. Without the IDP technique, the original PAMPA scheme would fail, due to the production of negative numerical pressures. As observed, the numerical solution obtained using the IDP PAMPA scheme is in good agreement with the exact solution, even in the vicinity of the vacuum region.
\end{ex}

\begin{figure}[h]
    \centering
    \begin{subfigure}[b]{0.32\textwidth} 
        \centering
        \includegraphics[width=\textwidth]{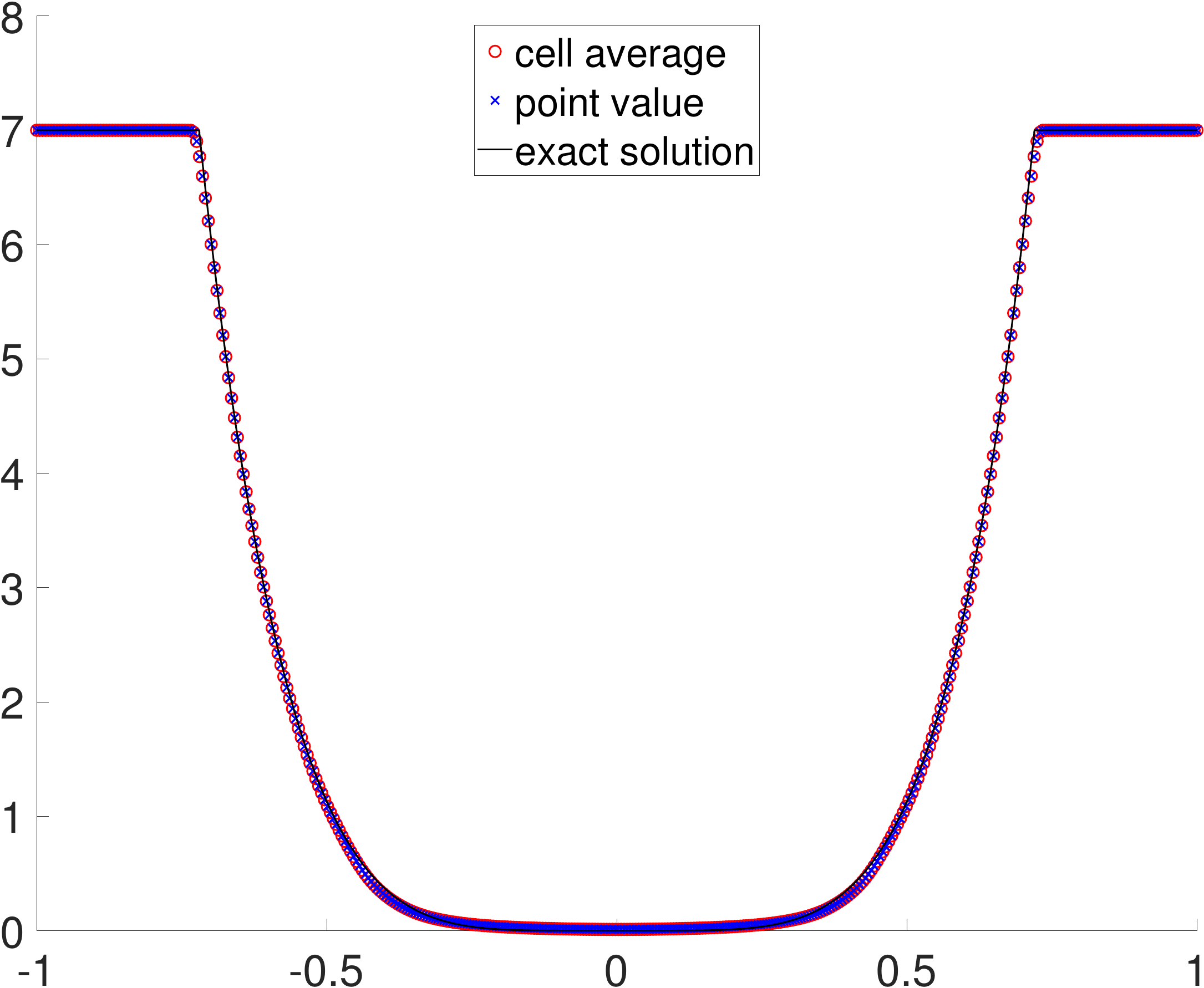}
        \caption{density}
    \end{subfigure}
    \hfill
         \begin{subfigure}[b]{0.32\textwidth}
        \centering
        \includegraphics[width=\textwidth]{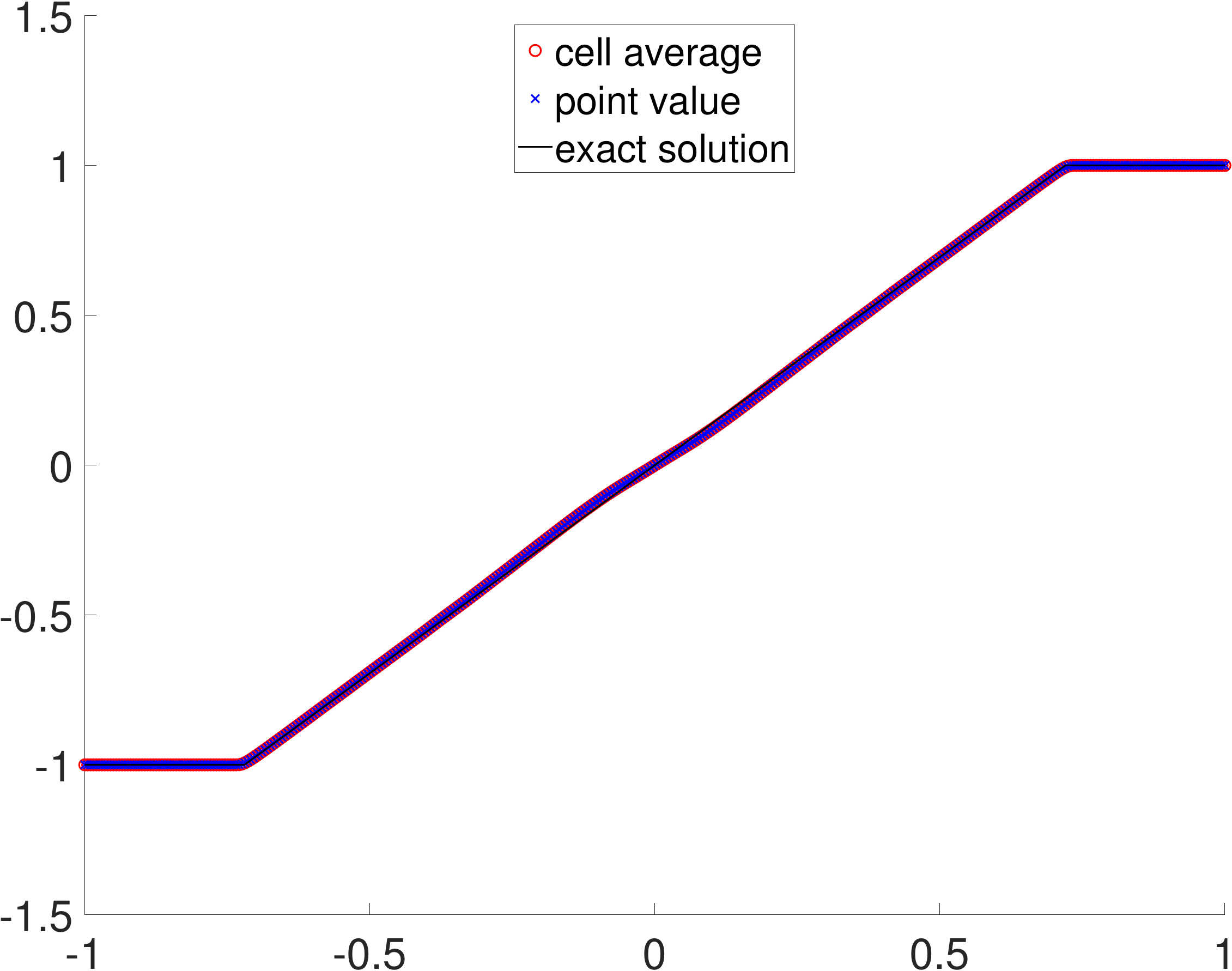}
        \caption{velocity}
    \end{subfigure}
    \hfill
        \begin{subfigure}[b]{0.32\textwidth}
        \centering
        \includegraphics[width=\textwidth]{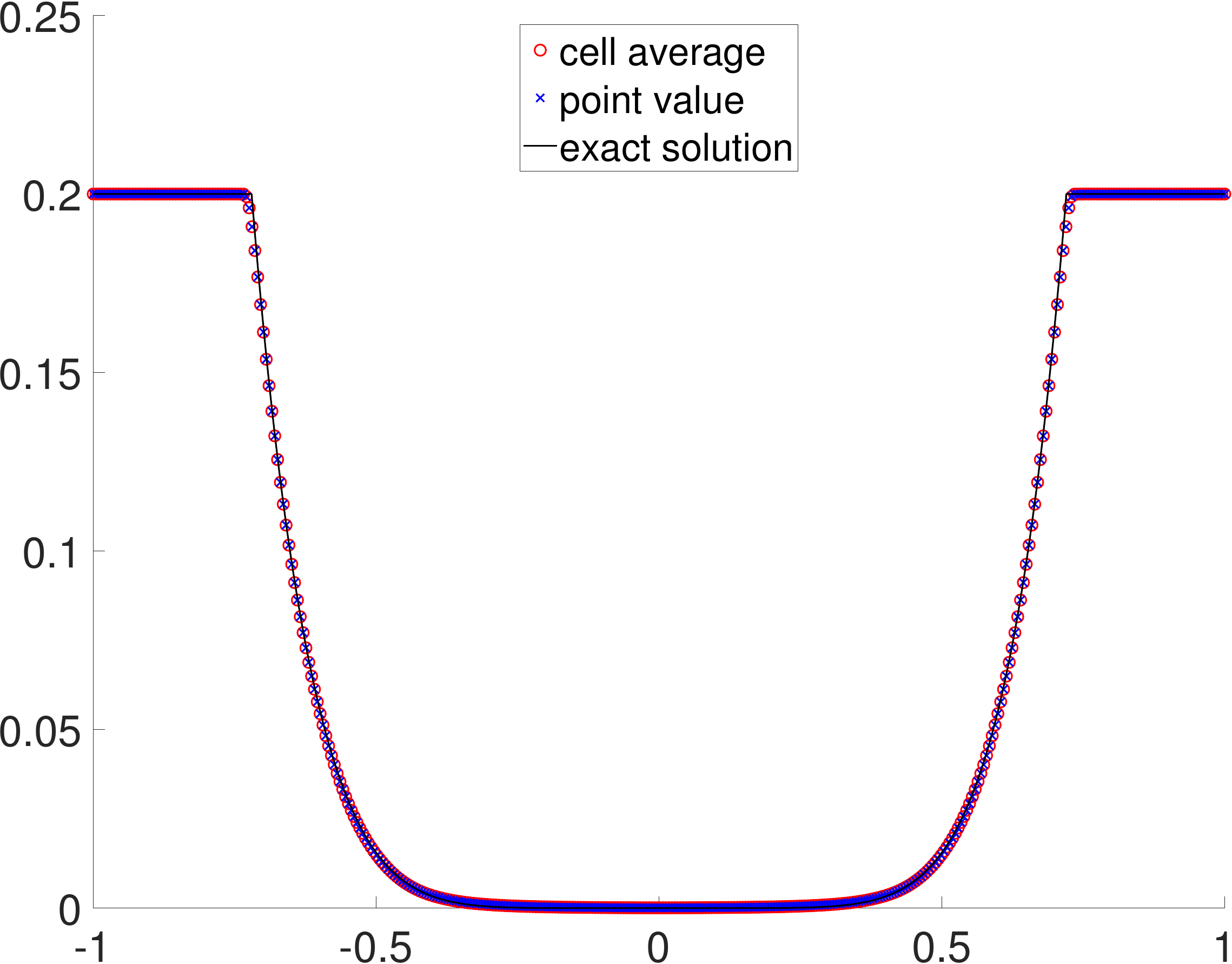}
        \caption{pressure}
    \end{subfigure}   
    \caption{Example 7.7: Numerical results of IDP PAMPA scheme with 400 cells.}
    \label{eulerrhodoublerarefaction}
\end{figure}

\begin{ex}[Shu--Osher problem]
\rm
This problem describes the interaction of sine waves and a right-moving shock. It is typically used to test the ability of high-order schemes to capture both the shock and the physical oscillations. The initial conditions are given by
\[
(\rho, v, p) = 
\begin{cases} 
(3.857143, 2.629369, 10.33333), & \text{if} \quad x < -4, \\
(1 + 0.2 \sin(5x), 0, 0.1), & \text{if} \quad x > -4.
\end{cases}
\]
The computational domain is chosen as $\Omega = [-5, 5]$, and the final time is set to $t = 1.8$. The solutions are computed using the IDP PAMPA scheme on a uniform mesh of 640 cells, and the results are shown in Figure \ref{eulerrhoshuosher}, where the OE procedure is applied to control potential nonphysical oscillations. The reference solution is computed using the local Lax–Friedrichs scheme on 300,000 uniform cells. It is observed that the IDP PAMPA scheme effectively captures both the shock and the high-frequency waves.
\end{ex}

\begin{figure}[h]
    \centering
    \begin{subfigure}[b]{0.32\textwidth} 
        \centering
        \includegraphics[width=\textwidth]{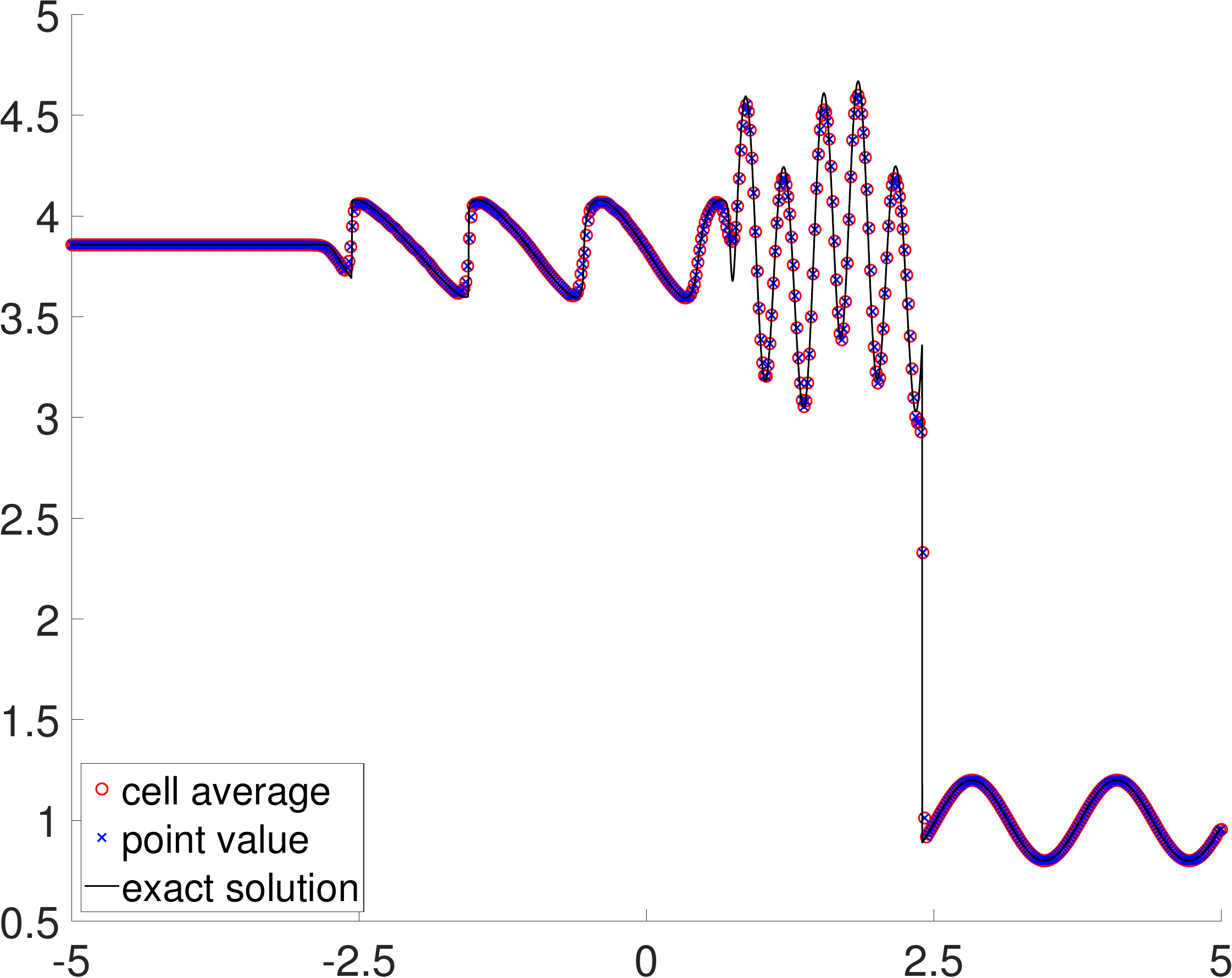}
        \caption{density}
    \end{subfigure}
    \hfill
    \begin{subfigure}[b]{0.32\textwidth}
        \centering
        \includegraphics[width=\textwidth]{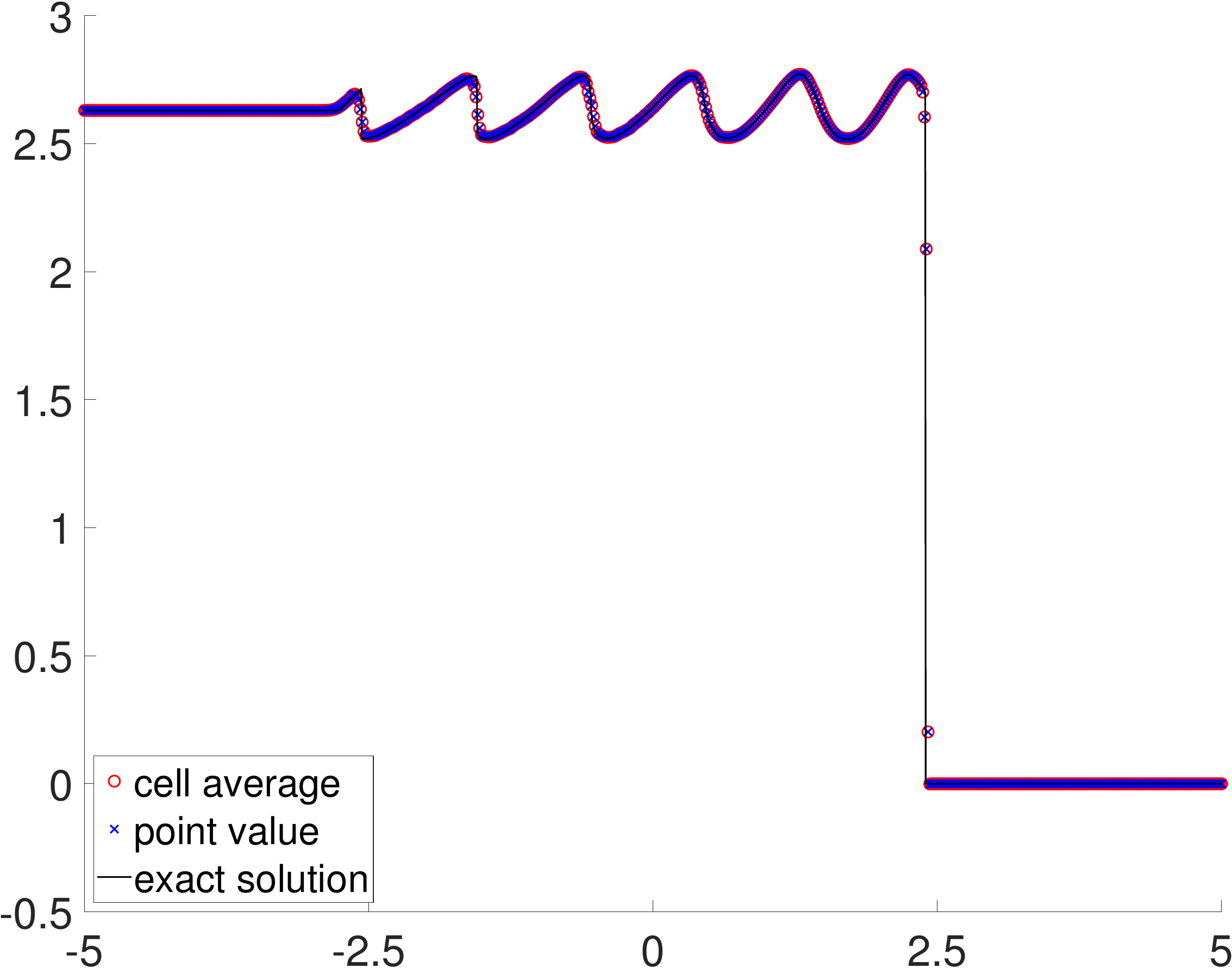}
        \caption{velocity}
    \end{subfigure}
    \hfill
        \begin{subfigure}[b]{0.32\textwidth}
        \centering
        \includegraphics[width=\textwidth]{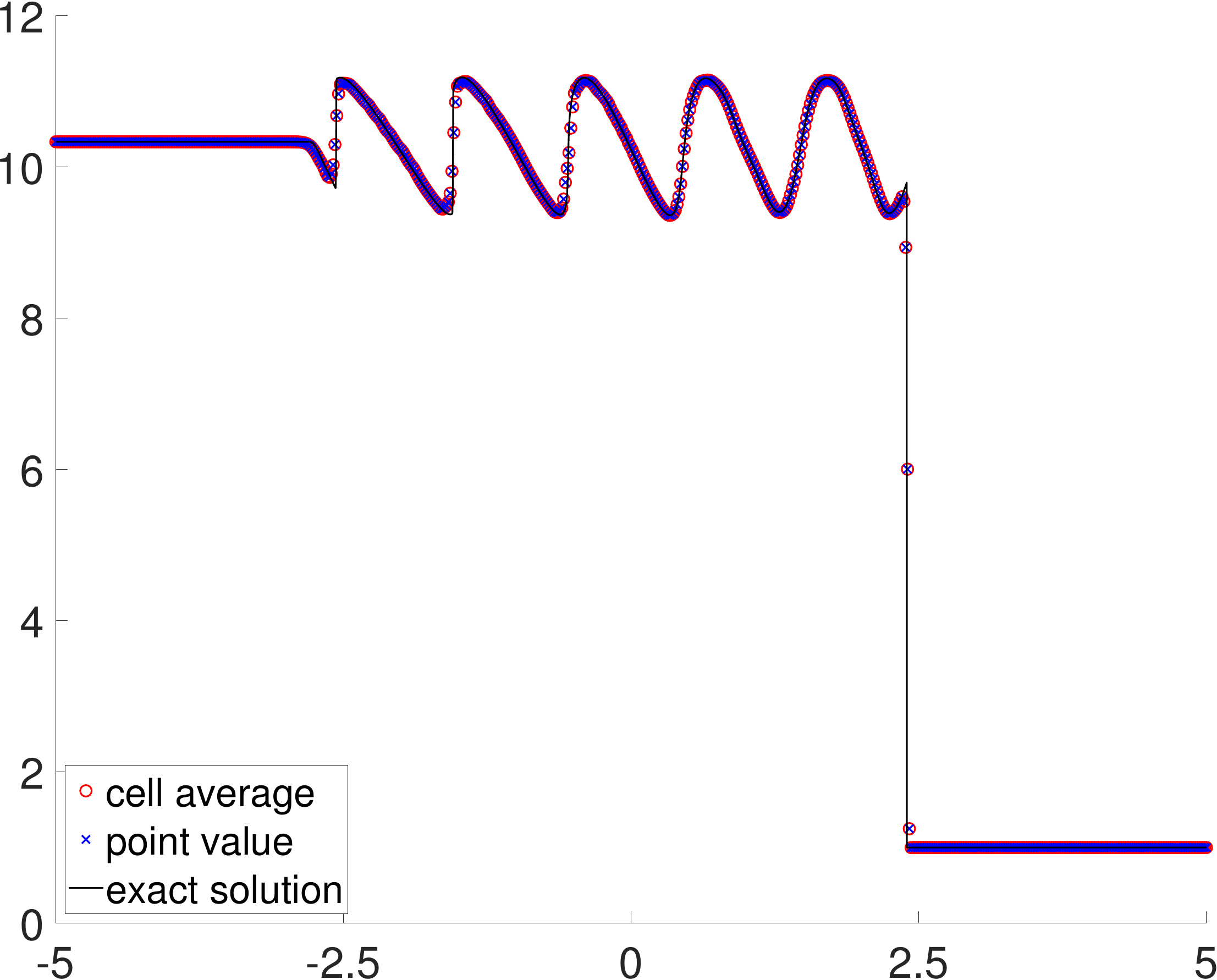}
        \caption{pressure}
    \end{subfigure}    
    \caption{Example 7.8: Numerical solutions of IDP PAMPA scheme with 640 cells.}
    \label{eulerrhoshuosher}
\end{figure}

\begin{ex}[Sedov problem]
\rm
This is a highly demanding problem defined on the domain $[-2,2]$, which involves very strong shocks and low-density regions. The exact solution is available in \cite{sedov1959similarity, korobeinikov1991problems}. The initial conditions are set with a uniform density of 1, zero velocity, and a total energy of $10^{-12}$ everywhere, except in the center cell, where the total energy is specified as $3.2 \times 10^{6}  \Delta x$, where $\Delta x$ is the mesh size. The simulation is conducted up to $t = 0.001$ using the IDP PAMPA scheme with 801 cells. The MP limiter is applied to suppress spurious oscillations near shocks. 
As shown in Figure \ref{eulerrhosedov}, the IDP PAMPA scheme effectively resolves both the strong shocks and the low-density profile. Without the IDP technique, the original PAMPA scheme would fail immediately for this simulation, highlighting the necessity of the IDP approach for handling such challenging problems.
\end{ex}

\begin{figure}[h]
    \centering
    \begin{subfigure}[b]{0.32\textwidth} 
        \centering
        \includegraphics[width=\textwidth]{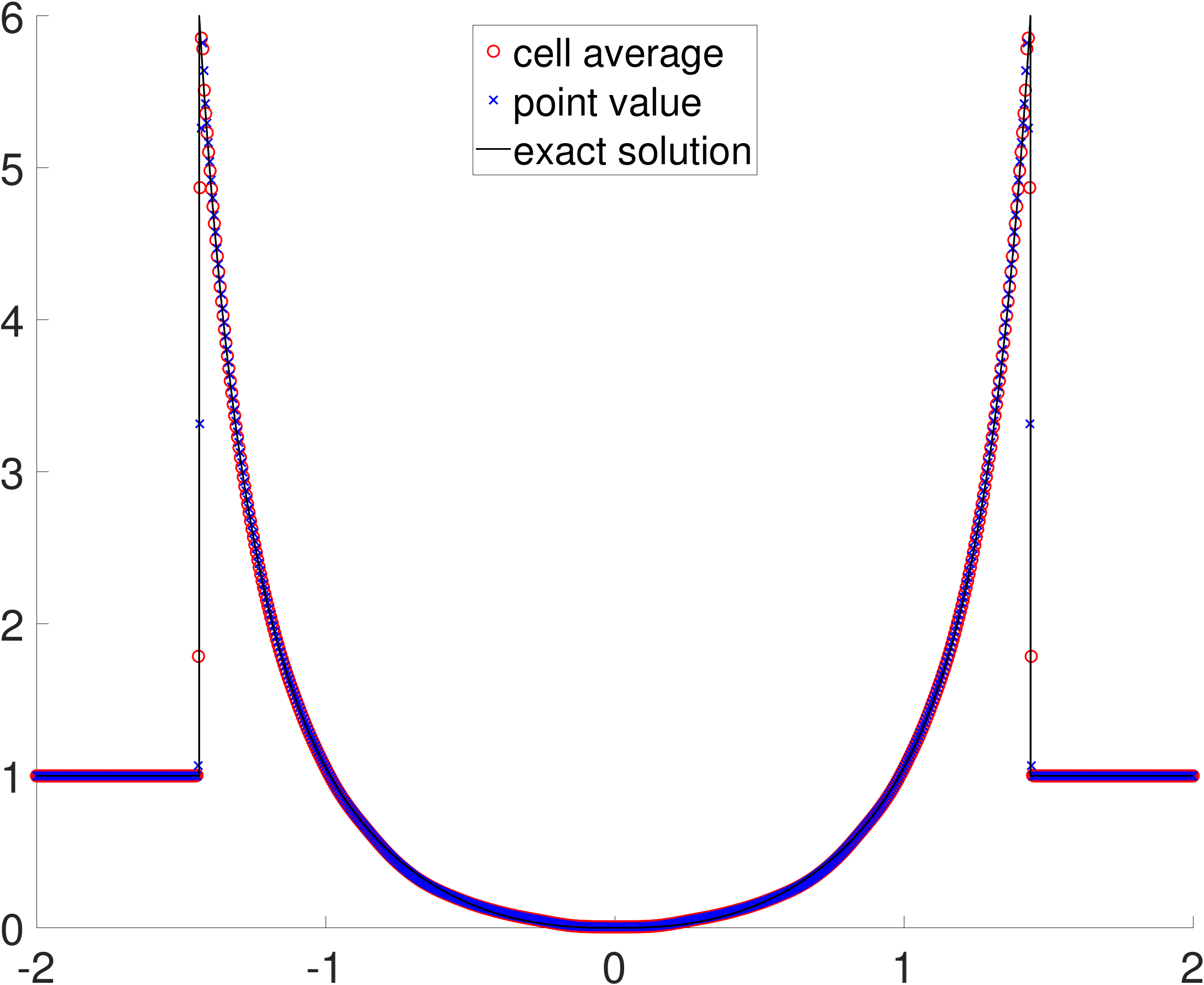}
        \caption{density}
    \end{subfigure}
    \hfill
        \begin{subfigure}[b]{0.32\textwidth}
        \centering
        \includegraphics[width=\textwidth]{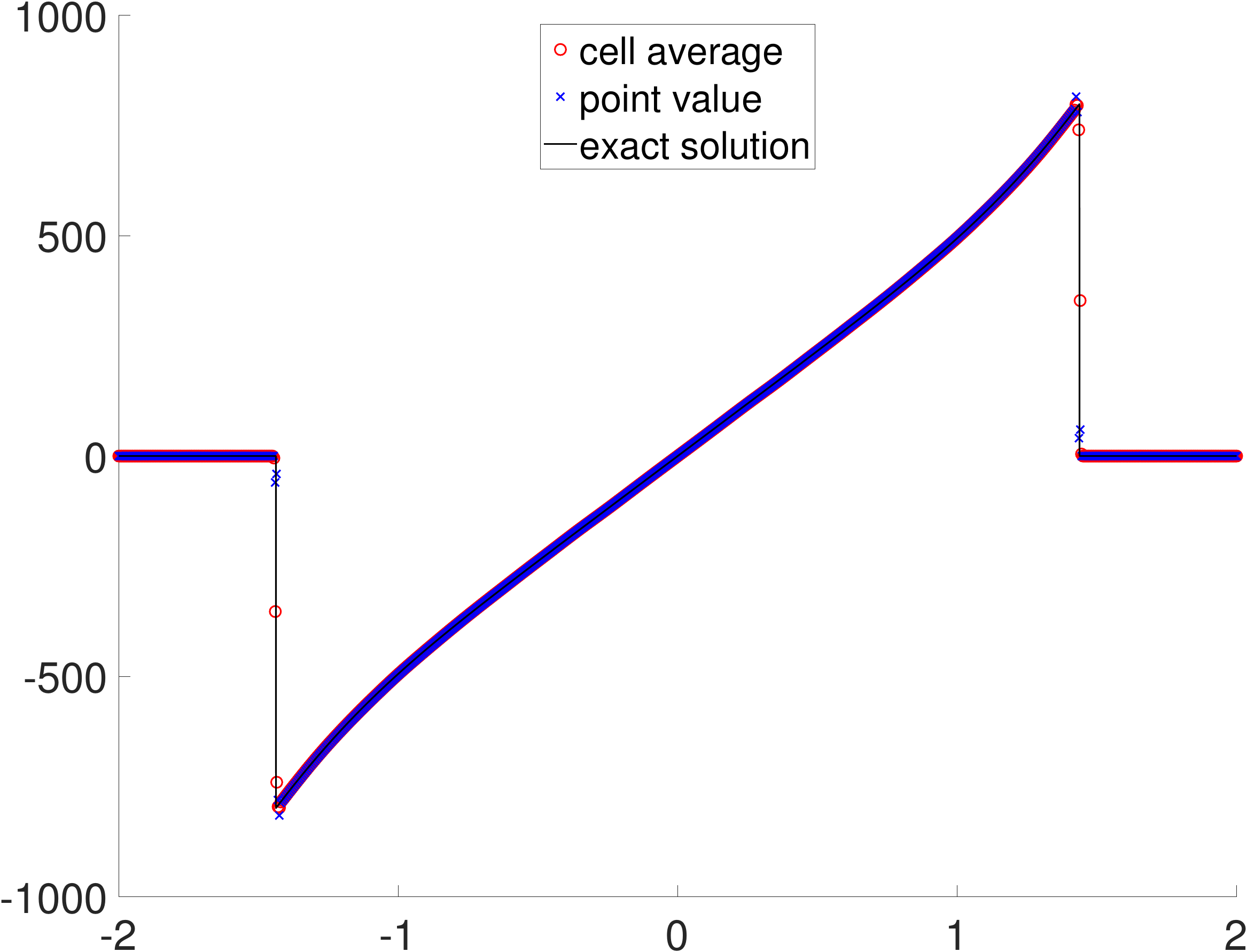}
        \caption{velocity}
    \end{subfigure}
    \hfill
        \begin{subfigure}[b]{0.32\textwidth}
        \centering
        \includegraphics[width=\textwidth]{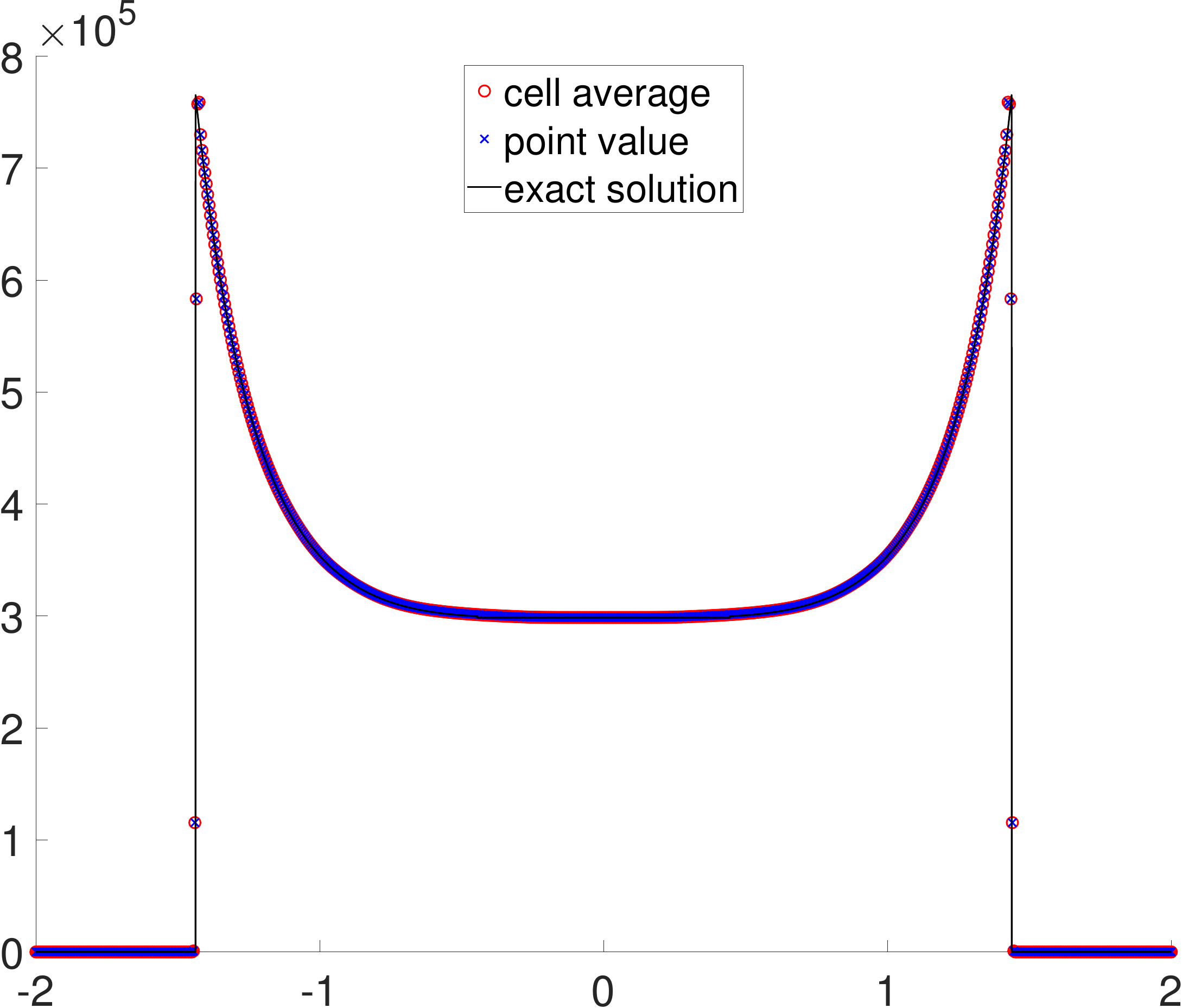}
        \caption{pressure}
    \end{subfigure}
    \caption{Example 7.9: Numerical results of IDP PAMPA scheme with 801 cells.}
    \label{eulerrhosedov}
\end{figure}

\begin{ex}[Leblanc problem]
\rm
The test is conducted on domain $[0,9]$ with the initial data:
$$
(\rho,v,p)=\begin{cases}(1,0,0.1(\gamma-1)), & {\rm if} ~~ x\leq 3,\\
(10^{-3},0,10^{-7}(\gamma-1)),& {\rm otherwise.}
\end{cases}
$$
The exact solution involves a strong shock wave propagating from the left (high-density high-pressure region) into the right (low-density low-pressure region), and a rarefaction wave moving towards the left. It involves very large gradients in pressure and density, particularly in the right-hand side near the vacuum region, posing significant challenges for the numerical simulation. 
Figure \ref{eulerrhoLeblancproblem} presents results at $t=6$ computed on a uniform mesh of 800 cells using the IDP PAMPA scheme. Both the MP limiter and OE procedure are tested for suppressing nonphysical oscillations. 
We observe that the strong shock is well captured by our scheme without spurious oscillations, even if there are some slight undershoots near the contact discontinuity, which were also noted in \cite{abgrall2024BPPAMPA, duan2024activefluxmethodshyperbolic}. Moreover, we observe that the code would break down due to negative pressure if the proposed IDP technique is not used in this test, confirming the necessity of the IDP approach for handling such challenging scenarios.
\end{ex}

\begin{figure}[h]
    \centering
    \begin{subfigure}[b]{0.32\textwidth} 
        \centering
        \includegraphics[width=\textwidth]{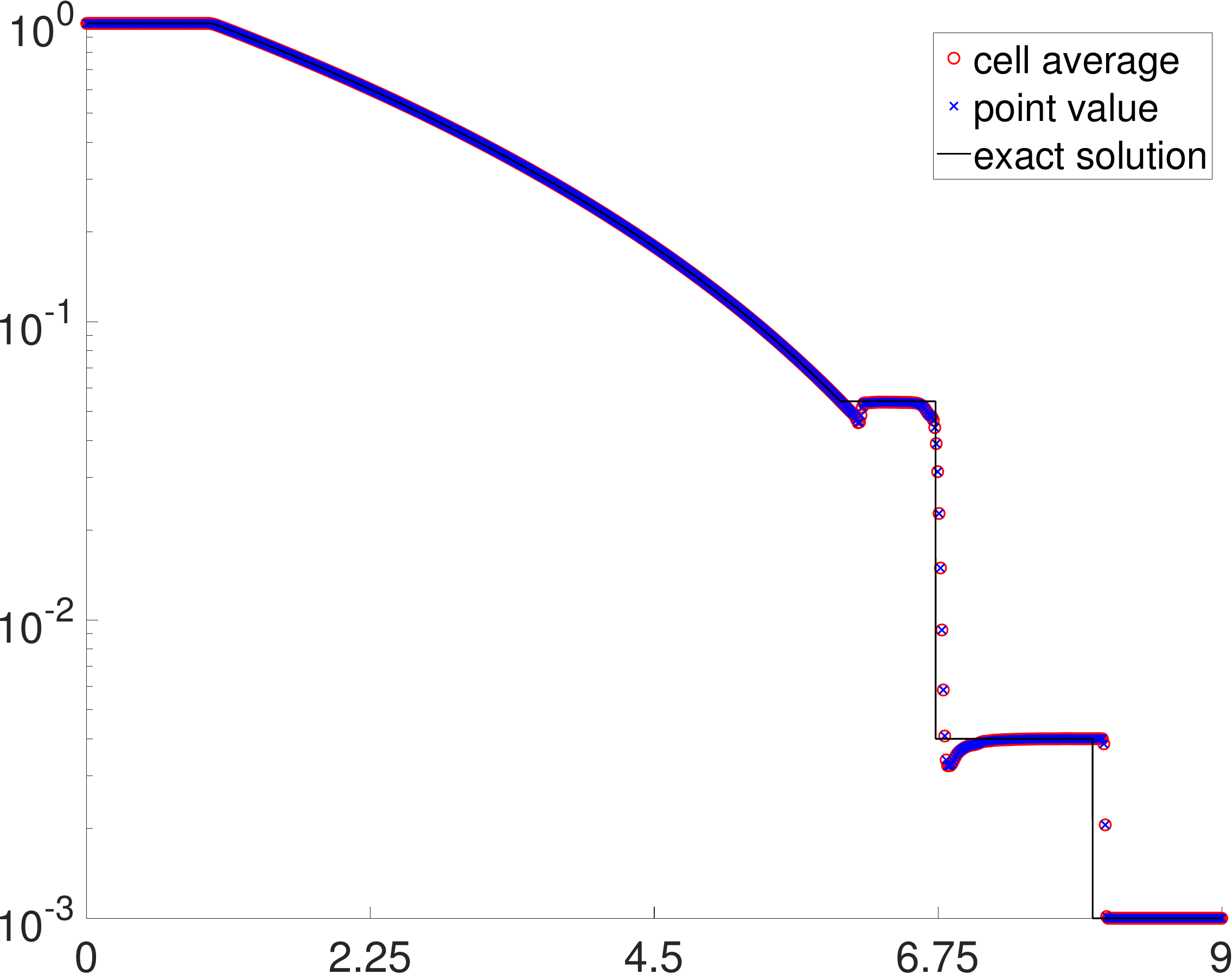}
        \caption{density}
    \end{subfigure}
      \hfill
        \begin{subfigure}[b]{0.32\textwidth}
        \centering
        \includegraphics[width=\textwidth]{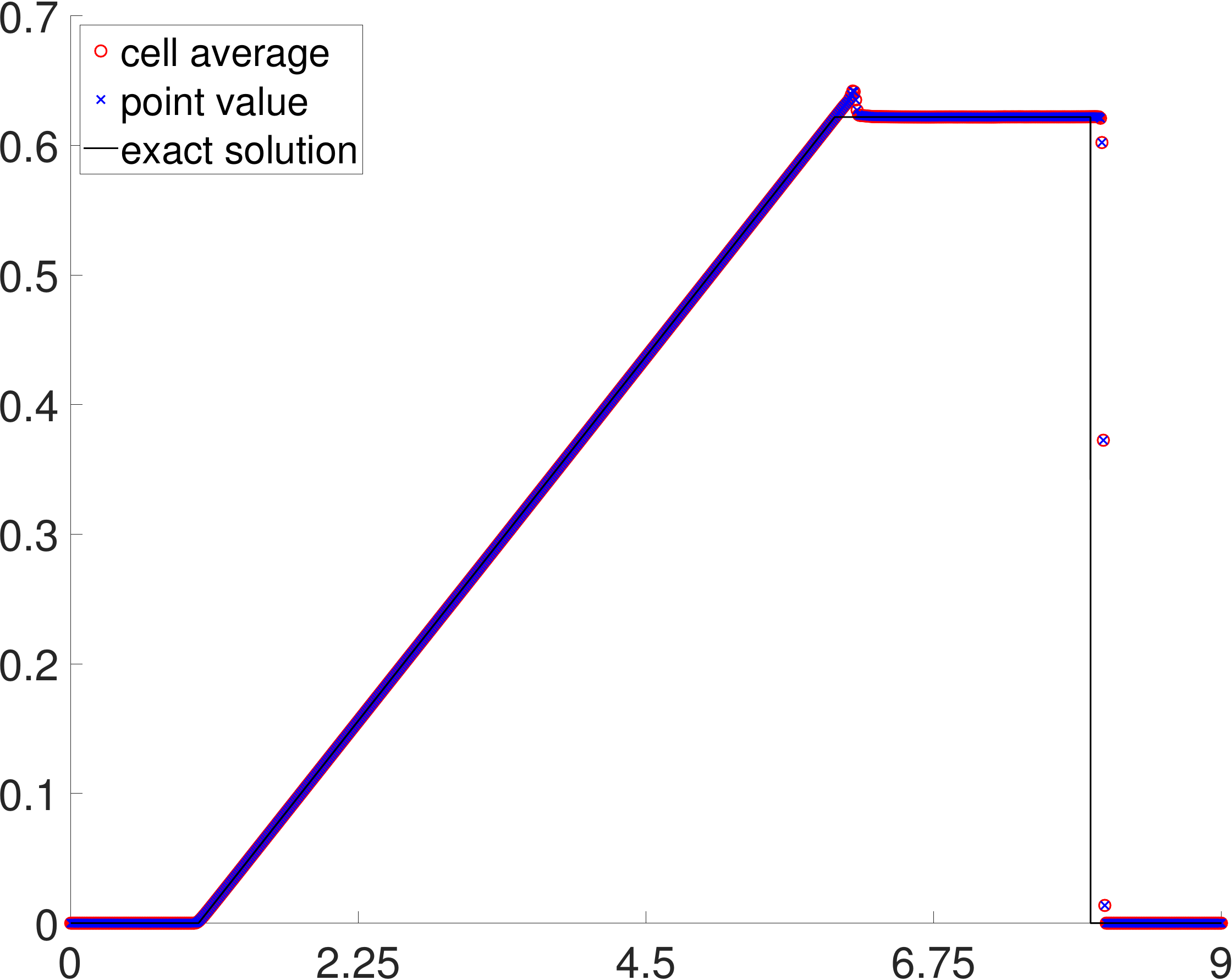}
        \caption{velocity}
    \end{subfigure}
    \hfill
    \begin{subfigure}[b]{0.32\textwidth}
        \centering
        \includegraphics[width=\textwidth]{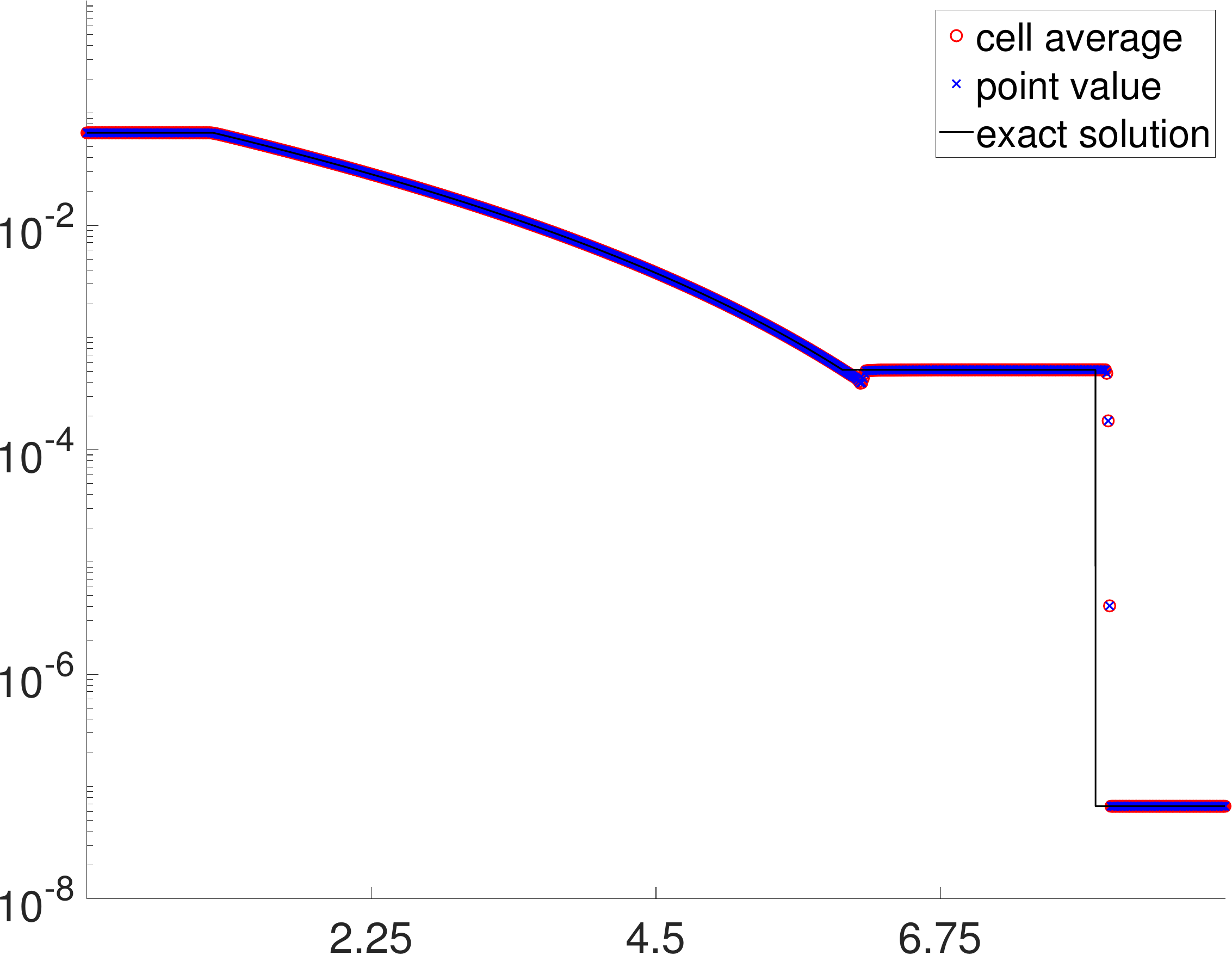}
        \caption{pressure}
    \end{subfigure}
       \vskip\baselineskip
    \centering
    \begin{subfigure}[b]{0.32\textwidth} 
        \centering
        \includegraphics[width=\textwidth]{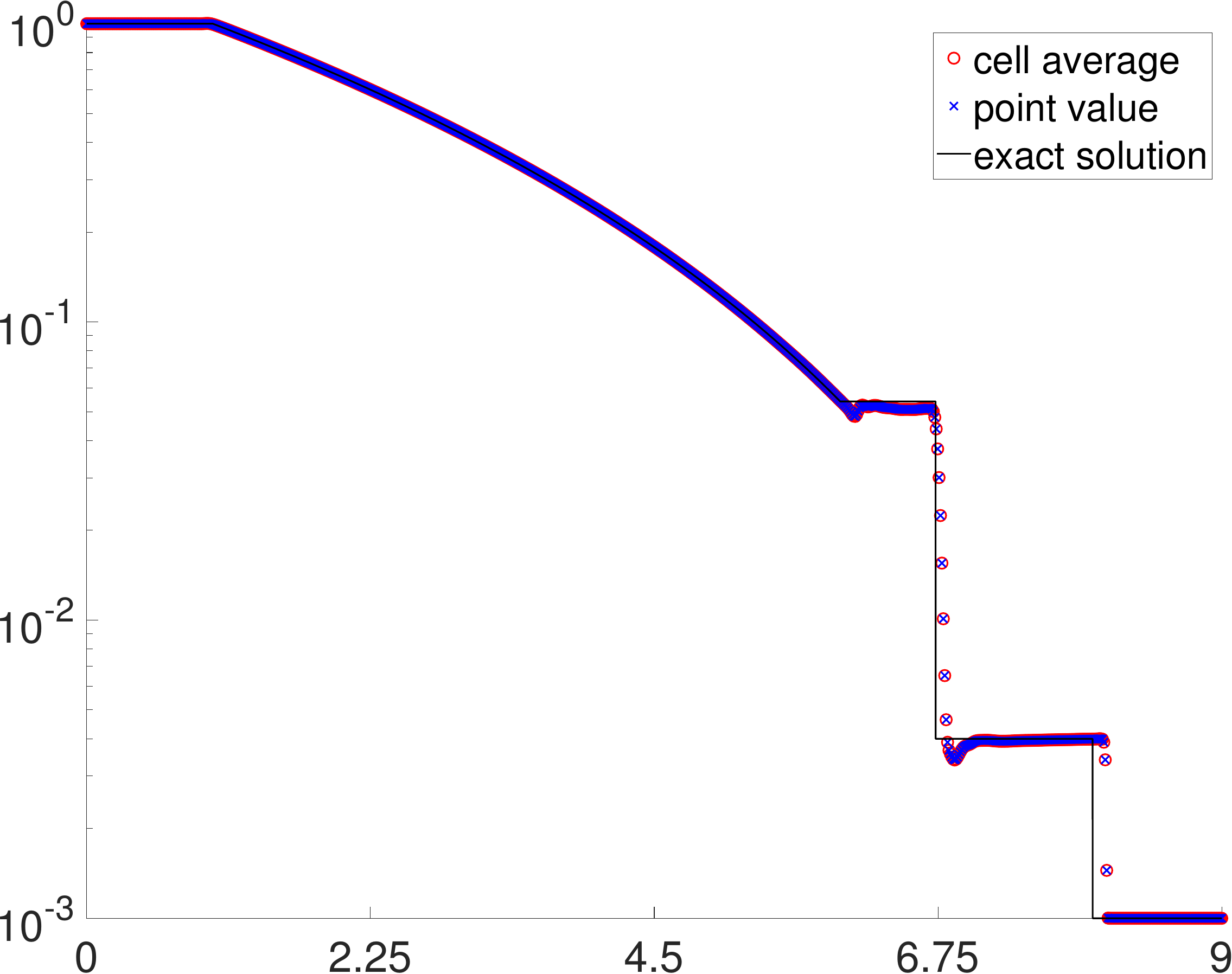}
        \caption{density}
    \end{subfigure}
    \hfill
    \begin{subfigure}[b]{0.32\textwidth}
        \centering
        \includegraphics[width=\textwidth]{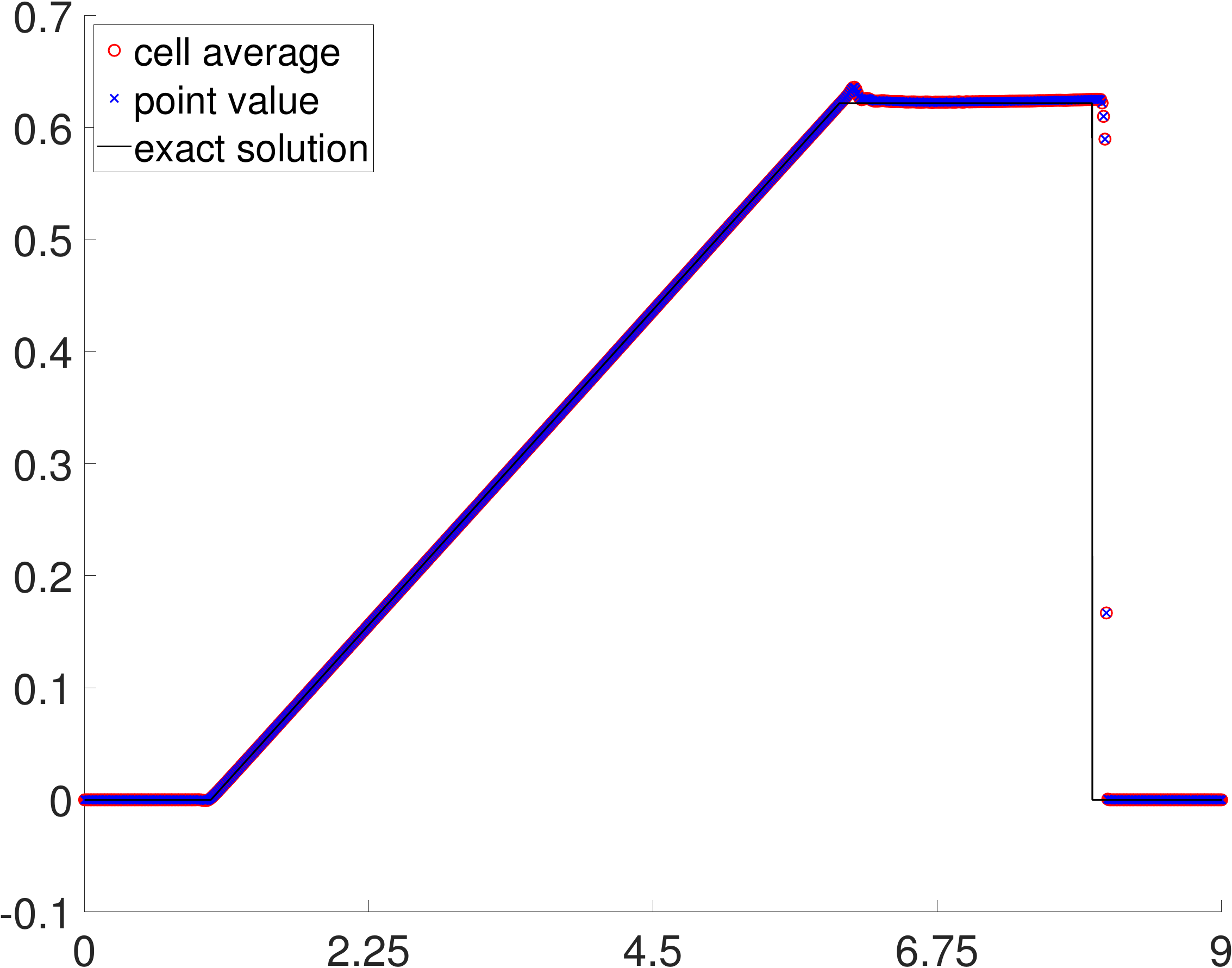}
        \caption{velocity}
    \end{subfigure}
    \hfill
        \begin{subfigure}[b]{0.32\textwidth}
        \centering
        \includegraphics[width=\textwidth]{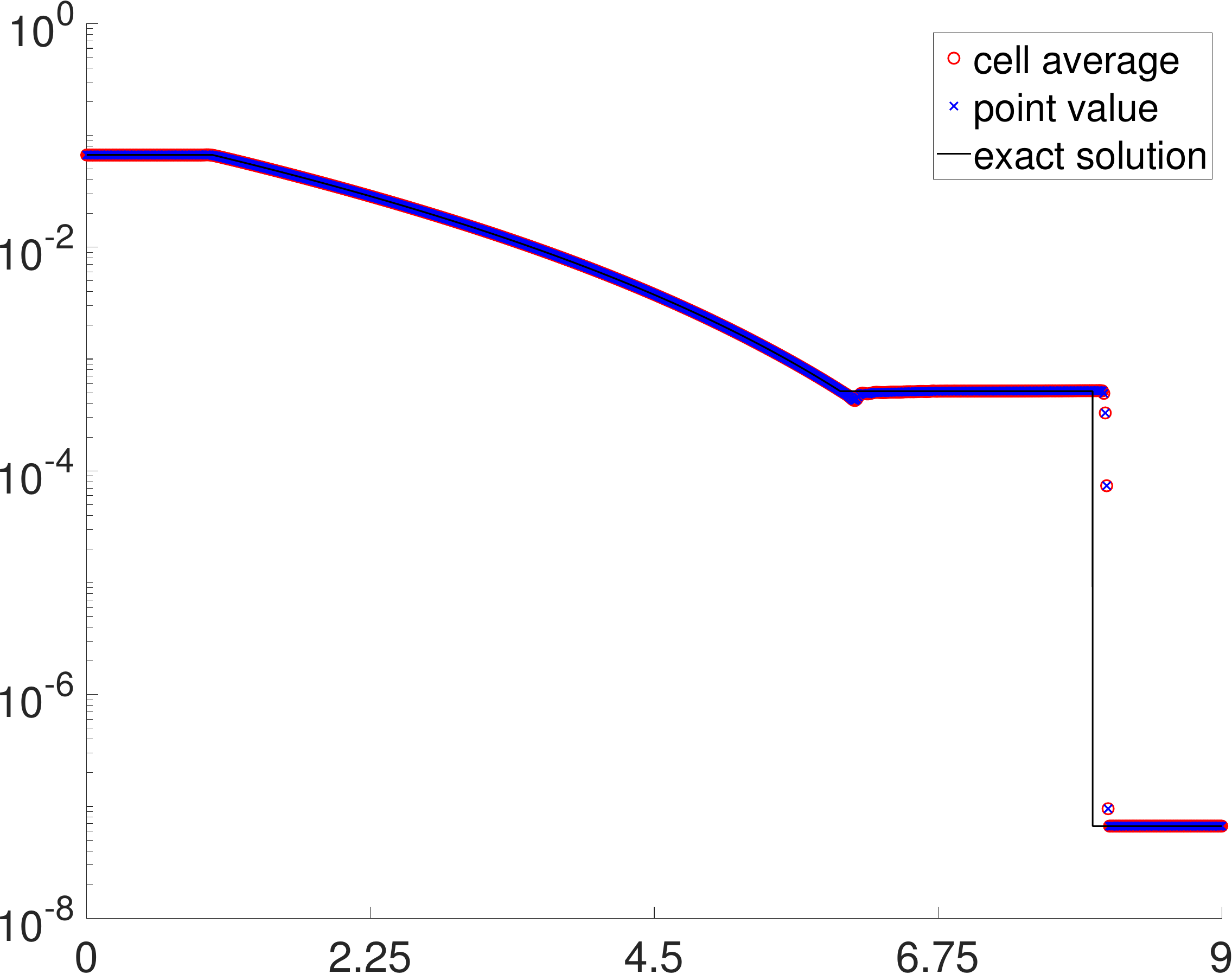}
        \caption{pressure}
    \end{subfigure} 
    \caption{Example 7.10: Numerical solutions computed by IDP PAMPA scheme with MP limiter (top) and OE procedure (bottom).}
    \label{eulerrhoLeblancproblem}
\end{figure}

\subsection{Ideal MHD Equations}

This subsection provides two examples of the 1D ideal MHD system \eqref{MHD}. Similar to the Euler equations, we evolve the point values using the variables  
$$
\mathbf{W} = \left(\ln\left(e^{\frac{\rho}{\rho_{\rm ref}}} - 1\right), v_x, v_y, v_z, B_y, B_z, s\right)^T,
$$ 
which ensures the updated point values are automatically IDP without any limiter. 

\begin{ex}[MHD shock tube problem]
    \rm
    To assess the shock-capturing capability of the IDP PAMPA scheme, we simulate a shock tube problem introduced in \cite{ryu1994numerical}. The adiabatic index is set as $\gamma = \frac{5}{3}$. The initial conditions are  
    $$
    (\rho, \mathbf{v}, \mathbf{B}, p) = 
    \begin{cases} 
        (1, 0, 0, 0, 0.7, 0, 0, 1), & {\rm if} ~~ x < 0.5, \\ 
        (0.3, 0, 0, 1, 0.7, 1, 0, 0.2), & {\rm if} ~~ x > 0.5. 
    \end{cases}
    $$
    The density and magnetic pressure at $t = 0.2$ computed using the IDP PAMPA scheme on 800 uniform cells are presented in Figure \ref{mhdshocktubeproblem}. The OE procedure is applied to eliminate spurious oscillations. 
    The reference solution is computed by using IDP PAMPA scheme with MP limiters on 4000 uniform cells. 
    We see that the waves and discontinuities are clearly resolved by the IDP PAMPA scheme. 
\end{ex}

\begin{figure}[h]
    \centering
    \begin{subfigure}[b]{0.48\textwidth} 
        \centering
        \includegraphics[width=\textwidth]{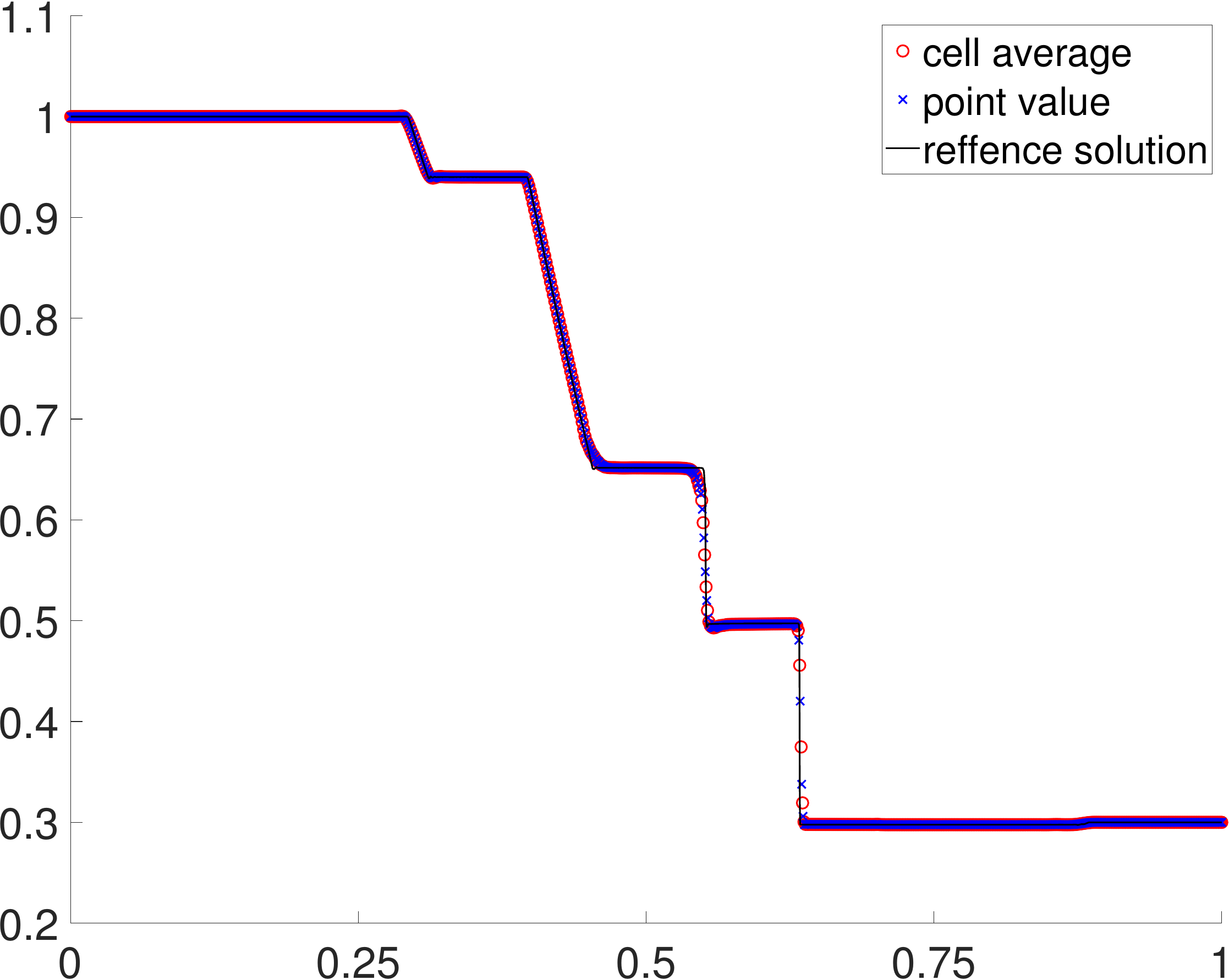}
        \caption{density}
    \end{subfigure}
    \hfill
    \begin{subfigure}[b]{0.48\textwidth}
        \centering
        \includegraphics[width=\textwidth]{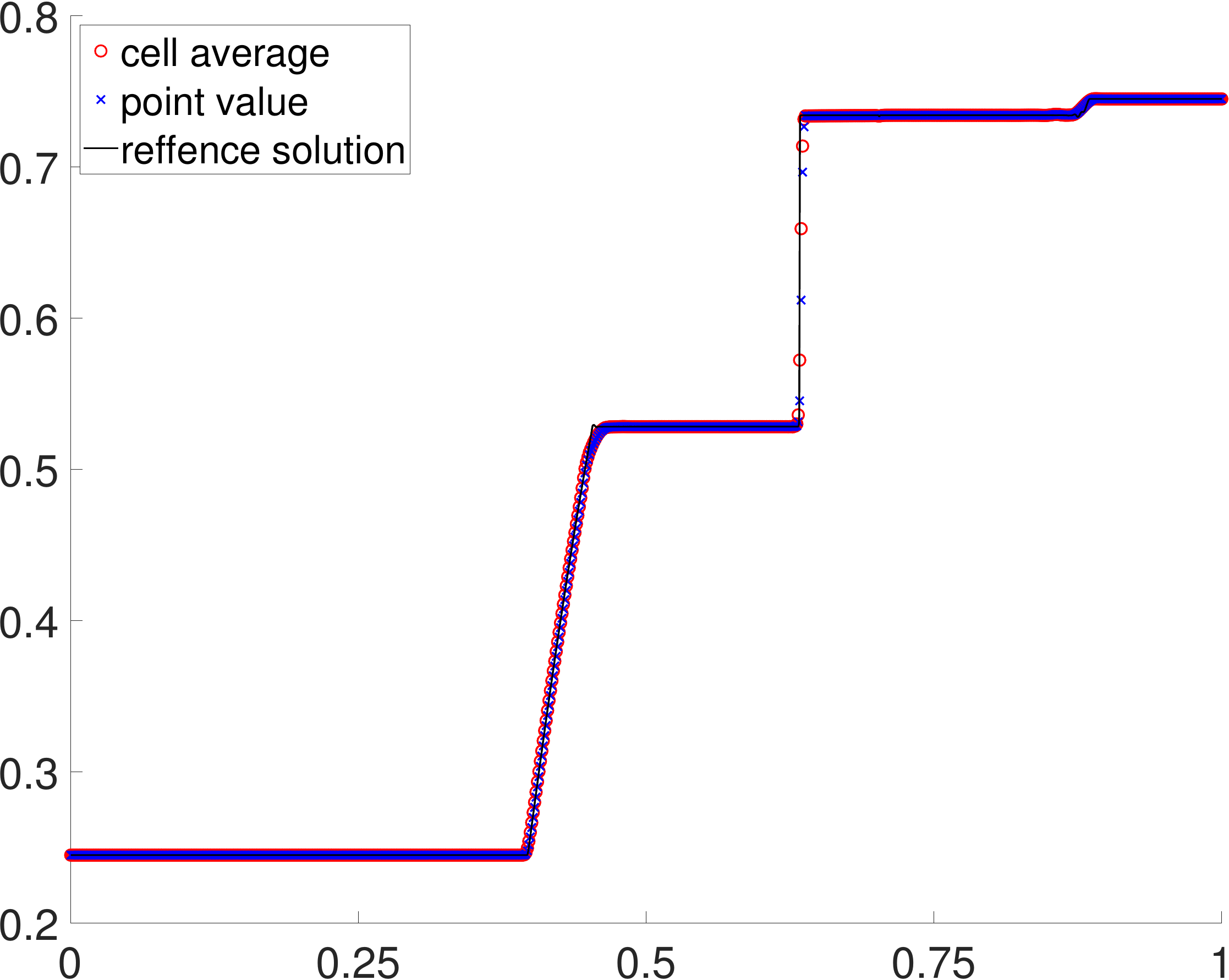}
        \caption{magnetic pressure}
    \end{subfigure} 
    \caption{Example 7.11: The density and magnetic pressure computed by our IDP PAMPA scheme with 800 uniform cells.}
    \label{mhdshocktubeproblem}
\end{figure}

\begin{ex}[MHD Leblanc problem with strong magnetic field]
    \rm
    To assess the robustness of the IDP PAMPA scheme for ideal MHD, we consider a challenging Riemann problem with a strong magnetic field. This test is a variant of the classical Leblanc problem in gas dynamics, adapted to include magnetic effects as introduced in \cite{Wu2019PPMHD}. The initial conditions are  
    $$
    (\rho, \mathbf{v}, \mathbf{B}, p) = 
    \begin{cases} 
        (2, 0, 0, 0, 0, 5000, 5000, 10^9), & {\rm if} ~~ x < 0, \\ 
        (0.001, 0, 0, 0, 0, 5000, 5000, 1), & {\rm if} ~~ x > 0. 
    \end{cases}
    $$
    The computational domain is $[-10, 10]$, and the adiabatic index $\gamma$ is 1.4. The problem features a huge initial pressure jump and a very low plasma-beta ($\beta = 4 \times 10^{-8}$) in the right state, making the numerical computation highly challenging. The density and magnetic pressure at $t = 0.00003$ obtained by using IDP PAMPA scheme  with 2000 uniform cells are shown in Figure \ref{mhdLeblancproblem}. The MP limiter is used to suppress spurious oscillations. The reference solutions are computed by using IDP PAMPA scheme on 10,000 uniform cells. One can see that the strong discontinuities are well captured even in such extreme test case. Again, we observe that the original PAMPA scheme without our IDP technique fails in this test, due to the generation of negative pressure. The proposed IDP PAMPA scheme keeps the positivity of density and pressure, and works very robustly throughout the simulation. 
\end{ex}

\begin{figure}[h]
    \centering
    \begin{subfigure}[b]{0.48\textwidth} 
        \centering
        \includegraphics[width=\textwidth]{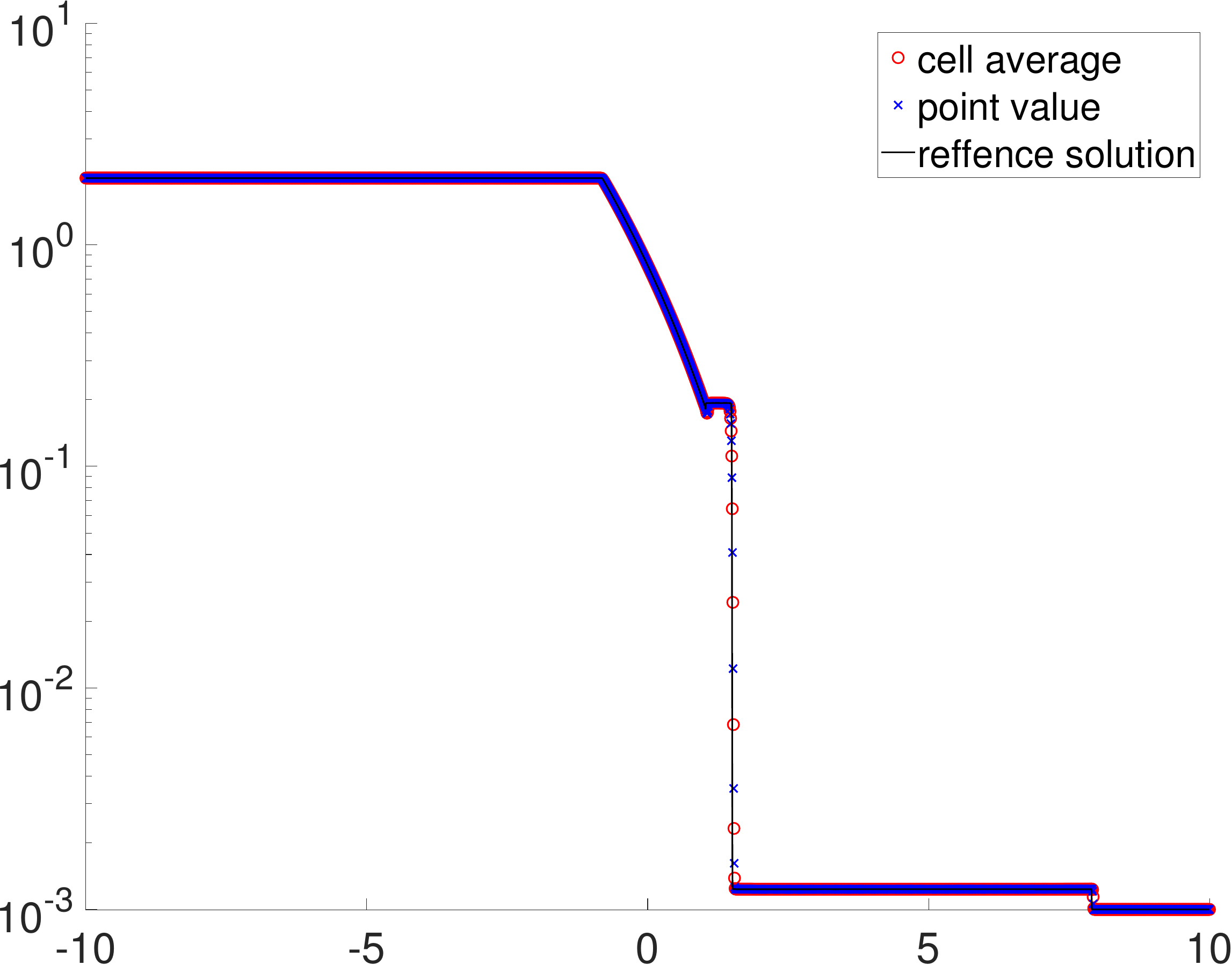}
        \caption{density}
    \end{subfigure}
      \hfill
        \begin{subfigure}[b]{0.48\textwidth}
        \centering
        \includegraphics[width=\textwidth]{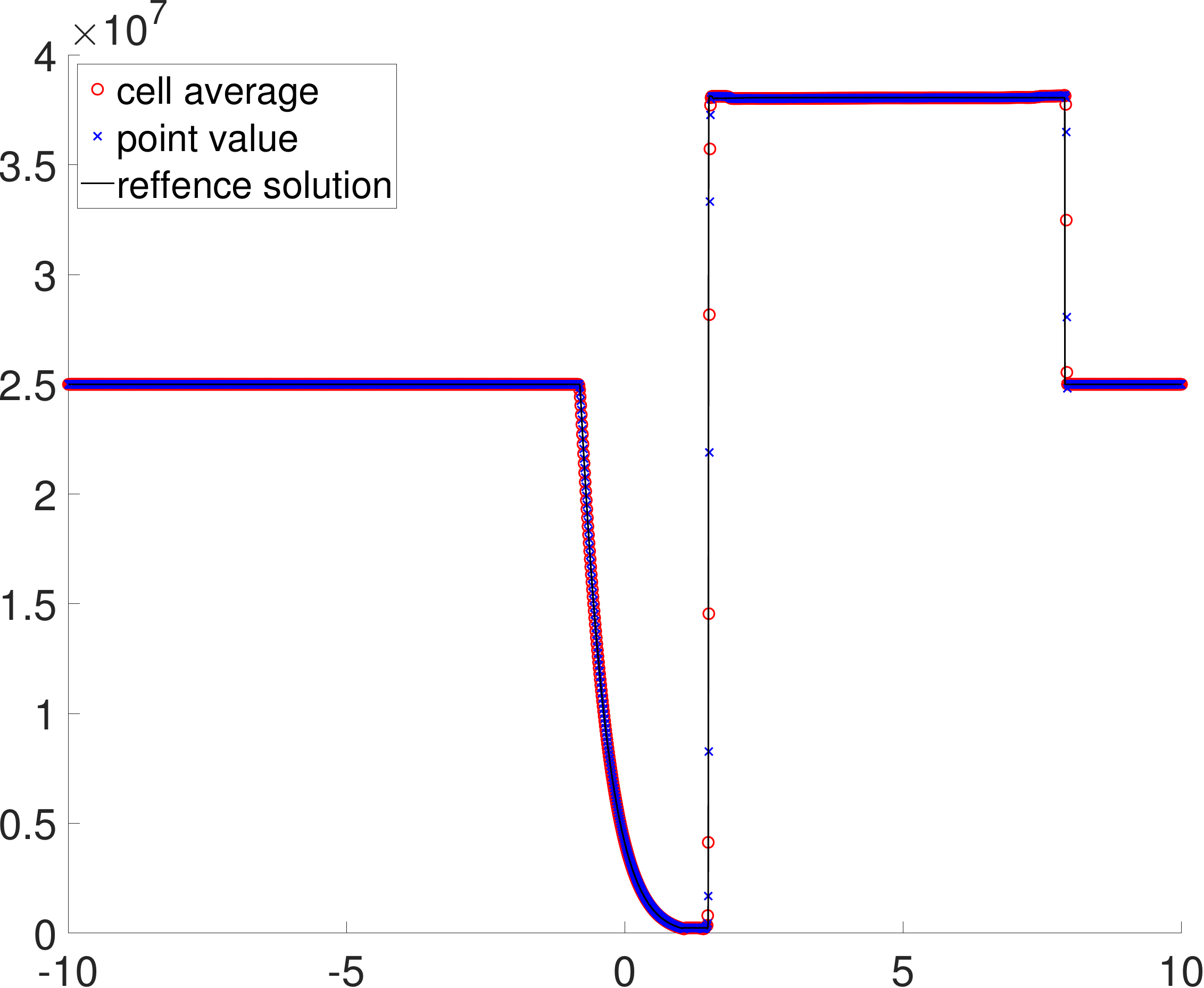}
        \caption{magnetic pressure}
    \end{subfigure}
    \caption{Example 7.12: The density and magnetic pressure computed by our IDP PAMPA scheme with 2000 uniform cells.}
    \label{mhdLeblancproblem}
\end{figure}

\section{Conclusions}

This paper proposed a novel and efficient IDP framework for the PAMPA scheme applied to general hyperbolic systems of conservation laws. We provided a rigorous theoretical analysis of the IDP property for the original PAMPA scheme, identifying key challenges and motivating the development of a simple IDP limiter to ensure midpoint values remain within the invariant domain. Additionally, we presented a provably IDP PAMPA scheme for cell averages that avoided the need for additional convex limiting procedures, distinguishing it from existing bound-preserving PAMPA schemes in the literature. By reformulating the governing equations, we developed an unconditionally limiter-free IDP scheme for evolving point values, drawing inspiration from techniques in machine learning. To further enhance stability, we introduced new oscillation-eliminating and monotonicity-preserving techniques that effectively suppressed spurious oscillations, allowing the PAMPA scheme to capture strong shocks. The accuracy and robustness of the proposed IDP PAMPA scheme were validated through a series of 1D numerical tests, including the linear advection equation, Burgers' equation, compressible Euler equations, and MHD equations. These results demonstrated that the IDP framework significantly improved the stability and accuracy of the PAMPA scheme for a wide range of hyperbolic equations. Future work will extend this IDP framework to multidimensional problems.

\bibliographystyle{siamplain}
\bibliography{references_article}

\end{document}